\newcommand{\monthyear}[1]{%
  \def\@monthyear{\uppercase{#1}}}
\newcommand{\volnumber}[1]{%
  \def\@volnumber{\uppercase{#1}}}
\def\ps@plain{\ps@empty
  \def\@oddfoot{\@monthyear \hfil \thepage}%
  \def\@evenfoot{\thepage \hfil \@volnumber}}
\def\ps@firstpage{\ps@plain}
\def\ps@headings{\ps@empty
  \def\@evenhead{%
    \setTrue{runhead}%
    \def\thanks{\protect\thanks@warning}%
    \uppercase{The Fibonacci Quarterly}\hfil}%
  \def\@oddhead{%
    \setTrue{runhead}%
    \def\thanks{\protect\thanks@warning}%
    \hfill\uppercase{Hypergeometric Template}}%
  \let\@mkboth\markboth
  \def\@evenfoot{%
    \thepage \hfil \@volnumber}%
  \def\@oddfoot{%
    \@monthyear \hfil \thepage}%
  }%
\theoremstyle{plain}
\numberwithin{equation}{section}
\newtheorem{thm}{Theorem}[section]
\newtheorem{lem}[thm]{Lemma}
\newtheorem{defi}[thm]{Definition}
\newtheorem{pro}[thm]{Property}
\begin{document}
%% replace the values in the next three lines by the correct information
\monthyear{Month Year}
\volnumber{Volume, Number}
\setcounter{page}{1}

\title{Winning Strategies for Generalized Zeckendorf Game}
\author{Steven~J.~Miller}
\address{Department of Mathematics and Statistics, Williams College, Williamstown, MA 01267}
\email{\textcolor{blue}{\href{mailto:sjm1@williams.edu}{sjm1@williams.edu}},  \textcolor{blue}{\href{mailto:Steven.Miller.MC.96@aya.yale.edu}{Steven.Miller.MC.96@aya.yale.edu}}}
\thanks{This work was completed as part of the 2022 Polymath Jr program. We thank our colleagues there, and the referee, for helpful comments on this research.}
\author{Eliel Sosis}
\address{University of Michigan, Ann Arbor, MI 48109}
\email{\textcolor{blue}{\href{mailto:esosis@umich.edu}{esosis@umich.edu}}}
\author{Jingkai~Ye}
\address{New York University, New York, NY 10012}
\email{\textcolor{blue}{\href{mailto:jy3695@nyu.edu}{jy3695@nyu.edu}}}

\begin{abstract}
Zeckendorf proved that every positive integer $n$ can be written uniquely as the sum of non-adjacent Fibonacci numbers; a similar result holds for other positive linear recurrence sequences. These legal decompositions can be used to construct a game that starts with a fixed integer $n$, and players take turns using moves relating to a given recurrence relation. The game eventually terminates in a unique legal decomposition, and the player who makes the final move wins. 

For the Fibonacci game, Player $2$ has the winning strategy for all $n>2$. We give a non-constructive proof that for the two-player $(c, k)$-nacci game, for all $k$ and sufficiently large $n$, Player $1$ has a winning strategy when $c$ is even and Player $2$ has a winning strategy when $c$ is odd. Interestingly, the player with the winning strategy can make a mistake as early as the $c + 1$ turn, in which case the other player gains the winning strategy. Furthermore, we proved that for the $(c, k)$-nacci game with players $p \ge c + 2$, no player has a winning strategy for any $n \ge 3c^2 + 6c + 3$. We find a stricter lower boundary, $n \ge 7$, in the case of the three-player $(1, 2)$-nacci game. Then we extend the result from the multiplayer game to multialliance games, showing which alliance has a winning strategy or when no winning strategy exists for some special cases of multialliance games.
%\vspace{-3pc}
\end{abstract}

\maketitle

\section{Introduction}

\subsection{History}
The Fibonacci numbers, defined by $F_1 = 1, F_2 = 2$ and $F_{n+1} = F_n + F_{n-1}$, are a fascinating sequence with many interesting properties and applications \cite{Kos}. Zeckendorf \cite{Ze} proved that every positive integer $n$ can be uniquely written as the sum of distinct, non-adjacent Fibonacci numbers. This sum is called the Zeckendorf decomposition of $n$, and is why we defined the sequence to start 1, 2, 3, as if we start with a 0 or with two 1's we lose uniqueness. There is an extensive literature on Zeckendorf expansions and their generalizations to other recurrence relations; see \cite{Al, BEFM1, BEFM2, BBGILMT, BILMT, Br, CHHMPV, Day, DDKMMV, FGNPT, GT, GTNP, Ha, HW, Ho, Ke, KKMW, Len, LLMMSXZ, MMMS, MMMMS, MW1, MW2, Ste} and the references therein.

Baird-Smith, Epstein, Flint, and Miller \cite{BEFM1, BEFM2} created the Zeckendorf Game, which is played on decompositions of integers as sums of Fibonacci numbers; we describe the game in the next subsection. They proved that every game terminates in the Zeckendorf decomposition, and in the two-player version, for any integer $n > 2$, Player $2$ always has a winning strategy. 

We explore a generalization of their game. Similar to many other problems in the field, we find significant progress is possible if we restrict what recurrences we study. This is similar to work of Brower et. al. \cite{BILMT}, who proved that if the recurrence relation has all coefficients positive integers, then the analysis of the distribution of gaps between summands can be computed cleanly (the probability of a gap of length $k \ge 2$ decays geometrically), and to Cordwell et. al. \cite{CHHMPV}, who proved that if the sequence of coefficients of the recurrence are positive non-increasing integers, then the generalized Zeckendorf decomposition is summand minimal.

%Li, Li, Miller, Mizgerd, Sun, Xia, and Zhou extended ideas from the Zeckendorf Game to create the Deterministic Zeckendorf Game \cite{LLMMSXZ}. 

%Grabner and Tichy \cite{GT} proved a generalized form of Zeckendorf's Theorem for positive linear recurrence sequences. They showed that for a positive linear recurrence sequence $\{H_{n}\}_{n=1}^\infty$, every positive integer has a unique legal decomposition. Baird-Smith, Epstein, Flint, and Miller \cite{BEFM1} then used this result to create Generalized Zeckendorf Game. Our main focus is on Generalized Zeckendorf Game, especially the winning strategies of this game, which we will describe in detail later.

%For Generalized Zeckendorf Game, Miller and Wang \cite{MW1} proved a generalized form of Zeckendorf's Theorem for positive linear recurrence sequences. They showed that for a positive linear recurrence sequence $\{H_{n}\}_{n=1}^\infty$, every positive integer has a unique legal decomposition (see \cite{MW1} for notation).
%(talk about fibonacci sequence, zeckendorf's theorem and zeckendorf decompositions, generalized zeckendorf's theorem for PLRS)

%%%%%%%%%%%%%%%%%%%%%%%%%%%%%%%%%%%%%%%%%%%%%%%%%%%%%%%%
%%%%%%%%%%%%%%%%%%%%%%%%%%%%%%%%%%%%%%%%%%%%%%%%%%%%%%%%
%%%%%%%%%%%%%%%%%%%%%%%%%%%%%%%%%%%%%%%%%%%%%%%%%%%%%%%%
\subsection{The Zeckendorf Game}
We quote from \cite{BEFM2} to describe the rules and play of the Zeckedorf game created by Baird-Smith, Epstein, Flint and Miller.

We first introduce some notation. When we write $\{1^n\}$ or $\{F_{1}^n\}$, we mean $n$ copies of $1$, the first Fibonacci number. If we have $7$ copies of $F_{1}$, $4$ copies of $F_{3}$, and $2$ copies of $F_{4}$, we write either $\{F_{1}^7 \wedge F_{3}^4 \wedge F_{4}^2\}$ or $\{1^7 \wedge 3^4 \wedge 5^2\}$.

\begin{defi}\label{2pZeck}
(The Two-Player Zeckendorf Game). At the beginning of the game, there is an unordered list of $n$ $1$’s, so the
initial list is $\{F_{1}^n\}$. On each turn, a player can do one of the following moves. 
\begin{enumerate}
   \item If the list contains two consecutive Fibonacci numbers, $F_{i-1}$ and $F_{i}$, then a player can change these to $F_{i+1}$. We denote this move $\{F_{i-1} \wedge F_{i} \rightarrow F_{i+1}\}$.
   \item If the list has two of the same Fibonacci number, $F_{i}$ and $F_{i}$, then
   \begin{enumerate}
     \item if $i = 1$, a player can change $F_{1}$ and $F_{1}$ to $F_{2}$, denoted by $\{F_{1} \wedge F_{1} \rightarrow F_{2}\}$,
     \item if $i = 2$, a player can change $F_{2}$ and $F_{2}$ to $F_{1}$ and $F_{3}$, denoted by $\{F_{2} \wedge F_{2} \rightarrow F_{1} \rightarrow F_{3}\}$, and
     \item if $i \geq 3$, a player can change $F_{i}$ and $F_{i}$ to $F_{i-2}$ and $F_{i+1}$, denoted by $\{F_{i} \wedge F_{i} \rightarrow F_{i-2} \wedge F_{i+1}\}$.
   \end{enumerate}
\end{enumerate}
The players alternative moving. The game ends when a player makes the Zeckendorf decomposition of $n$, for which no further moves are possible.
\end{defi}

Proofs that the Zeckendorf game is playable and ends at the Zeckendorf decomposition can be found in \cite{BEFM2}. The same paper also gives a non-constructive proof that for all $n > 2$, player $2$ has a winning strategy. There are many papers expanding these results in several directions, including the winning strategy of multiplayer and multialliance Zeckendorf Games \cite{CDH1}, and bounds of Zeckendorf games \cite{CDH2}. Furthermore, there are some other interesting games stemming from the Fibonacci Zeckendorf Game, including the Fibonacci Quilt game \cite{MN}, Bergman game \cite{BDD}, Deterministic Zeckendorf game \cite{LLMMSXZ} and Generalized Zeckendorf games \cite{BCD}. As this paper mainly focuses on Generalized Zeckendorf games, below is a detailed introduction of this game.
%(notation for types of moves, the game is playable [finite moves with lengths of different parity], player 2 has winning strategy)

%%%%%%%%%%%%%%%%%%%%%%%%%%%%%%%%%%%%%%%%%%%%%%%%%%%%%%%%
%%%%%%%%%%%%%%%%%%%%%%%%%%%%%%%%%%%%%%%%%%%%%%%%%%%%%%%%
%%%%%%%%%%%%%%%%%%%%%%%%%%%%%%%%%%%%%%%%%%%%%%%%%%%%%%%%
\subsection{Generalized Zeckendorf Games}
The Zeckendorf game as described so far only concerns a game on the Fibonacci sequence. However, using the Generalized Zeckendorf theorem, \cite{BEFM1} defined the Generalized Zeckendorf Game for certain positive linear recurrence sequences. We quote from \cite{BEFM1} to describe the sequences and how to play the game. 

\begin{defi}\label{ck-nacci}((c,k)-nacci Numbers).
We call any sequence defined by a recurrence $S_{i+1} = c S_{i} + c S_{i-1} + \cdots + c S_{i-k}$ a generalized $k$-nacci sequence with constant $c$. The initial conditions are as follows: $S_{1} = 1$, and for $1 \leq n < k + 1$ we have $S_{i+1} = c S_{i} + c S_{i-1} + \cdots + c S_{1} + 1$. The terms $S_{t}$ are called (c, k)-nacci numbers.
\end{defi}

\begin{defi}\label{2pGenZeck}(The Two-Player $(c,k)$-nacci Zeckendorf Game).
Two people play the $(c, k)$-nacci Zeckendorf Game (a special Generalized Zeckendorf Game), for the $(c, k)$-nacci numbers. At the beginning of the game, we have an unordered list of $n$ $1$’s. If $i < k + 1$, $S_{i+1} = c S_{i} + c S_{i-1} + \cdots + c S_{1} + 1$. If $i \geq k$, $S_{i+1} = c S_{i} + c S_{i-1} + \cdots + c S_{i-k}$. Therefore our initial list is $\{S^n_{1}\}$. On each turn we can do one of the following moves.
\begin{enumerate}
   \item If our list contains $k + 1$ consecutive $k$-nacci numbers each with multiplicity $c$, then we can change these to $S_{i+1}$. We denote this move $\{c S_{i-k} \wedge c S_{i-k + 1} \wedge \cdots \wedge c S_{i} \rightarrow S_{i+1}\}$.
   \item If our list contains consecutive $k$-nacci numbers with multiplicity $c$ up to an index less than or equal to $k$, and $S_{1}$ with multiplicity $c + 1$, we can do the move $\{(c + 1) S_{1} \wedge c S_{2} \wedge \cdots \wedge c S_{i} \rightarrow S_{i+1}\}$.
   \item If the list has $c + 1$ of the same $k$-nacci number $S_{i}$, then
   \begin{enumerate}
     \item If $i = 1$, then we can change $(c + 1) S_{1}$ to $S_{2}$, denoting this move $\{(c + 1) S_{1} \rightarrow S_{2}\}$;
     \item If $1 < i < k + 1$, then we can change $(c + 1) S_{i}$ to $S_{i+1}$, denoted by $\{(c+1) S_{i} \rightarrow S_{i+1}\}$;
     \item If $i = k + 1$, then we can do the move $\{(c + 1) S_{i} \rightarrow S_{i+1} \wedge S_{1}\}$; and
     \item If $i > k + 1$, then we can do the move $\{(c + 1) S_{i} \rightarrow S_{i+1} \wedge c S_{i-k-1}\}$.
   \end{enumerate}
\end{enumerate}
Players alternate moving until no moves remain.
\end{defi}

Proofs that certain Generalized Zeckendorf games are playable and end at Generalized Zeckendorf decomposition can be found in \cite{BEFM1}. For other results on the Zeckendorf game and some of its generalizations, see \cite{BCD, BDD, CDH1, CDH2, LLMMSXZ, MN}.  %\huge ADD OTHER REFS \normalsize

%(types of moves, game is playable)

%%%%%%%%%%%%%%%%%%%%%%%%%%%%%%%%%%%%%%%%%%%%%%%%%%%%%%%%
%%%%%%%%%%%%%%%%%%%%%%%%%%%%%%%%%%%%%%%%%%%%%%%%%%%%%%%%
%%%%%%%%%%%%%%%%%%%%%%%%%%%%%%%%%%%%%%%%%%%%%%%%%%%%%%%%
\subsection{Main Results}
We find winning strategies for some Generalized Zeckendorf games, and also consider multiplayer and multialliance versions of the game. First, we start with exploring the Tribonacci Game, which is a special case of the Generalized $(c,k)$-nacci Zeckendorf Game. In the following two results, we find which player has a winning strategy for the two-player and multiplayer Tribonacci Games. 
\begin{lem}\label{2pTrib}
For all $n > 9$, Player $2$ has the winning strategy for the two-player Tribonacci Game.
\end{lem}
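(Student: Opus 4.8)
The plan is to combine Zermelo's theorem with a strategy-stealing argument, in the spirit of the non-constructive proof that Player~$2$ wins the Fibonacci game for $n>2$ in \cite{BEFM2}. First I would record that the Tribonacci game is the $(1,2)$-nacci game (so $S_1=1,\,S_2=2,\,S_3=4,\,S_4=7,\,S_5=13,\dots$), and that by the playability and termination results of \cite{BEFM1} every game from $\{S_1^n\}$ is finite, has perfect information, cannot end in a draw, and terminates at the unique Tribonacci--Zeckendorf decomposition of $n$. Hence by Zermelo's theorem exactly one player has a winning strategy, and it suffices to rule out Player~$1$.

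Next I would exploit that the opening is forced: when the list is $\{S_1^n\}$ with $n\ge 2$, neither move~1 nor move~2 applies (no higher-index terms are present), so the only legal move is $2S_1 \to S_2$ (move~3(a)). Thus after Player~$1$'s turn the position is necessarily $Q=\{S_1^{\,n-2}\wedge S_2\}$ with Player~$2$ to move. Since the last player to move wins, Player~$2$ wins the original game if and only if the player to move wins the subgame started from $Q$; the forced first move has handed the initiative to Player~$2$. This reframing is the key reduction.

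To exclude Player~$1$ I would argue by contradiction and steal a hypothetical winning strategy. Suppose Player~$1$ has a winning strategy $\mathcal S$; equivalently, the player to move from $Q$ loses. The idea is to let Player~$2$, faced with $Q$, run a shadow copy of $\mathcal S$ one tempo behind, answering each of Player~$1$'s moves by the response $\mathcal S$ would make, so that Player~$2$ always moves after Player~$1$ and therefore makes the terminal move; since both the real and the shadow game must end at the \emph{same} unique decomposition guaranteed by \cite{BEFM1}, this would force Player~$2$ to win, contradicting that $\mathcal S$ wins for Player~$1$. Making this correspondence rigorous is the main obstacle, and is the part I expect to be genuinely delicate: because positions are unordered multisets and the Tribonacci moves are not reversible (unlike those of a symmetric combinatorial game), one must verify that the move dictated by the shadow strategy is always \emph{legal} in the real game and never terminates it prematurely in Player~$1$'s favor. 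This is exactly where the hypothesis $n>9$ enters: enough copies of $S_1$ must remain available to guarantee the relevant transfer moves throughout, and $n>9$ is the regime in which the restricted move set cannot flip the generic parity advantage. Finally I would dispatch the finitely many cases $3\le n\le 9$ by direct inspection of the small game tree, which should also reveal that the threshold $n>9$ is essentially sharp.
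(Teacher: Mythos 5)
Your setup (Zermelo's theorem plus the observation that the opening move $\{1^{n}\}\to\{1^{(n-2)}\wedge 2\}$ is forced) is consistent with the paper, but the core of your argument has a genuine gap: the ``shadow copy of $\mathcal S$ one tempo behind'' mechanism does not work, and no amount of care about legality of the copied moves will fix it. The reason is that in the Tribonacci game the number of moves from start to finish is \emph{not} determined by the final position: different legal play sequences from $\{1^{n}\}$ to the unique decomposition have different lengths, and in fact lengths of different parities. For example, $\{1^{n}\}\to\{1^{(n-2)}\wedge 2\}\to\{1^{(n-4)}\wedge 4\}$ (using $1\wedge 1\wedge 2\to 4$) takes two moves, while $\{1^{n}\}\to\{1^{(n-2)}\wedge 2\}\to\{1^{(n-4)}\wedge 2^{(2)}\}\to\{1^{(n-4)}\wedge 4\}$ takes three. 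Consequently ``both games end at the same unique decomposition'' gives no control over \emph{who} makes the last move; having Player $2$ echo the responses of a hypothetical winning strategy does not ensure that Player $2$ moves last, and the remark that the forced opening ``hands the initiative to Player $2$'' proves nothing by itself. The parity of the game length is exactly the quantity in dispute, and it is not an invariant of the position reached. (Your appeal to \cite{BEFM2} also mischaracterizes that proof: it is not a tempo-copying argument but a game-tree labelling argument of the kind described next.)

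The paper's proof turns this non-invariance into the weapon rather than the obstacle. It propagates winning-strategy labels down the explicit game tree, using the two rules: if the player who does \emph{not} hold the strategy is to move, every child inherits the label; if the strategy-holder is to move, at least one child does. The key point is that the same position, $\{1^{(n-7)}\wedge 7\}$, occurs at two consecutive depths (row $5$, Player $1$ to move, and row $6$, Player $2$ to move), because it can be reached by routes whose lengths differ by one. Chasing the labels through both routes forces the single node $\{1^{(n-9)}\wedge 2\wedge 7\}$ to carry a winning strategy for both players, a contradiction; the hypothesis $n>9$ merely guarantees enough $1$'s remain for all displayed moves to be legal. If you want to salvage your approach, the missing idea is precisely this reachability of one position in both parities; once you have it, no shadow strategy is needed at all.
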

\begin{thm}\label{thm1} 
For the multiplayer Tribonacci game, when $n \geq 7$, for any $p \geq 3$ no player has a winning strategy.
\end{thm}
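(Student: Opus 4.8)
The plan is to reduce the multiplayer statement to a question about the parity of the total move count and then exploit a length-flexibility ``lever'' that the other players can always seize. Label the players $P_1,\dots,P_p$ in the order they move, so that $P_j$ makes the final move — and hence wins — exactly when the total number of moves $L$ satisfies $L \equiv j \pmod p$. To prove that no player has a winning strategy it suffices to show that for each fixed $j$ the remaining $p-1$ players have a joint strategy forcing $L \not\equiv j \pmod p$: a winning strategy for $P_j$ would by definition defeat even this adversarial coalition, so exhibiting such a coalition strategy rules $P_j$ out, and letting $j$ range over all players finishes.

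The engine of the argument is that $L$ is \emph{not} an invariant of the game. Tracking the token count $T$ (the number of summands currently in the list), move type (1) decreases $T$ by $2$ and move (2) by at least $2$, moves (3a)--(3b) decrease $T$ by $1$, while the splitting moves (3c)--(3d) leave $T$ unchanged; the game begins at $T=n$ and ends at $T=z(n)$, the number of summands in the Zeckendorf decomposition of $n$. Hence the same net reduction $n-z(n)$ can be realized by different numbers of moves: from a sub-list containing $\{2S_1 \wedge S_2\}$ one reaches $\{S_3\}$ either in one move via (2), or in two moves via $\{2S_1 \to S_2\}$ (3a) followed by $\{2S_2 \to S_3\}$ (3b). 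Whoever is to move at such a configuration therefore controls a clean $\pm 1$ change in $L$. I would first prove a \emph{Flexibility Lemma}: for $n \geq 7$ the game can always be steered into a position carrying such a lever, late enough that its resolution fixes the last residue of $L$. The value $7 = S_4$ enters here as the smallest $n$ guaranteeing the recurrence has generated enough of $S_2, S_3$ for the lever to be present independent of the opponents' play.

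With the lever in hand, the argument is finished by a coalition-control step that genuinely uses $p \geq 3$. Since $p-1 \geq 2$, at least two of every $p$ consecutive turns belong to the coalition, so the coalition can arrange for one of its members to be the player who resolves the final lever; that member then selects the one- or two-move completion so that $L$ avoids the class $j \pmod p$. The delicate point — and the step I expect to be the main obstacle — is showing that $P_j$ cannot destroy or monopolize this flexibility: one must rule out lines in which $P_j$ either refuses ever to create a $\{2S_1 \wedge S_2\}$-type configuration or seizes every lever personally, forcing the decisive choice onto their own turn. Handling this requires a careful potential/parity bookkeeping showing that a coalition-held lever always survives to the end, and this is precisely where two coalition turns per cycle ($p \geq 3$), rather than the single opponent of Lemma~\ref{2pTrib}, is essential.

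Finally, I would dispatch the threshold. The general multiplayer bound quoted in the abstract gives the conclusion only for $n \geq 3c^2 + 6c + 3 = 12$ when $c=1$, so the content of the sharper bound is the window $7 \leq n \leq 11$. For these finitely many values I would verify the Flexibility Lemma, and hence the coalition strategy, by direct inspection of the game graph, while simultaneously confirming that $n=7$ is the genuine cutoff by checking that for the relevant smaller $n$ the game is too short or too forced for the coalition to deny a designated player. The large-$n$ case then follows from the Flexibility Lemma together with the coalition-control argument above, completing the proof of Theorem~\ref{thm1}.
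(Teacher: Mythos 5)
You have found the right lever---the interchangeability of the two-move sequence $\{1 \wedge 1 \rightarrow 2\}$, $\{2 \wedge 2 \rightarrow 4\}$ with the single move $\{1 \wedge 1 \wedge 2 \rightarrow 4\}$ is exactly the engine of the paper's proof---but your plan for converting it into a contradiction has a genuine gap, and it is precisely the step you yourself flag as ``the main obstacle.'' Your strategy asks the coalition to control the residue of the \emph{total} game length $L$ modulo $p$, which forces you to place the lever ``late enough that its resolution fixes the last residue of $L$'' and to prove that Player $j$ can neither destroy nor monopolize every lever until the end of the game. Neither claim is established, and the first is not even the right target: near the end of the game the position is close to the Zeckendorf decomposition of $n$, so whether a configuration $\{1 \wedge 1 \wedge 2\}$ is still available there depends on $n$, and the coalition member resolving a lever cannot know the length of the remaining game, which still depends on everyone's future choices. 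A single $\pm 1$ lever therefore does not by itself let the coalition steer $L \pmod p$.

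The paper's proof shows that you do not need to control $L$ at all, and that the lever can sit at the very start of the game. One first proves (Property \ref{TribProp}) that if Player $m$ has a winning strategy, then no winning path can contain the three consecutive steps $\{1 \wedge 1 \rightarrow 2\}$, $\{1 \wedge 1 \rightarrow 2\}$, $\{2 \wedge 2 \rightarrow 4\}$ with the middle step taken by some Player $a \neq m$: Player $a$ could instead play the merged move $\{1 \wedge 1 \wedge 2 \rightarrow 4\}$, reaching the same position one turn earlier, and since the rules are symmetric under cyclic relabeling of players this hands Player $m-1 \pmod p$ a winning strategy from that position, contradicting Player $m$'s. Then one only has to show the coalition can force such a triple early on: for $p \geq 4$, three consecutive non-$m$ players do it immediately (needing $n \geq 11$); for $p = 3$, the two non-$m$ players each play $\{1 \wedge 1 \rightarrow 2\}$, and the answer to your ``can Player $m$ destroy the lever?'' worry is purely local---Player $m$'s single intervening move removes at most one $2$ while two were just created, so a $2$ survives and Player $m+1$ can merge on the next round (needing $n \geq 13$); the remaining range $7 \leq n \leq 12$ is settled by brute force. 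Your reduction to ``the coalition forces $L \not\equiv j$'' is fine as far as it goes, but without the relabeling-symmetry contradiction, the unproven Flexibility Lemma and the end-game control are where all the difficulty lives, so as written the proposal does not constitute a proof.
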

Next, we extend our results to the Generalized $(c,k)$-nacci Zeckendorf Game. Our main result, shown below, is for the two-player $(c,k)$-nacci game: we find a general pattern for which player has a winning strategy for any positive integers $c$ and $k$. 
\begin{thm} \label{thm3}
In a $(c,k)$-nacci game, for any $c\geq 1, k \geq 1, n\geq (c+1)^3+(c+1)$, when $c$ is odd, player $2$ always has a winning strategy; when $c$ is even, player $1$ always has a winning strategy.
\end{thm}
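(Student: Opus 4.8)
The plan is to reduce the theorem to a single parity statement and then settle that statement by a non-constructive, strategy-stealing argument. Since the game is finite, terminates in the unique decomposition (by the results quoted from \cite{BEFM1}), and the player making the last move wins, the winner of a given play is determined by the parity of the total number of moves: Player $1$ wins iff that play uses an odd number of moves. First I would analyze the forced opening. Starting from $\{S_1^n\}$, the only applicable move is $\{(c+1)S_1 \to S_2\}$, and as long as fewer than $c$ copies of $S_2$ are present no other move is available (every alternative requires at least $c$ copies of some $S_i$ with $i\ge 2$). Hence for $n$ larger than $c(c+1)$ the first $c$ moves are forced and the game reaches $Q_c := \{S_1^{m}\wedge S_2^{c}\}$ with $m = n-c(c+1)$. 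The player to move at $Q_c$ (on turn $c+1$) is Player $1$ precisely when $c$ is even and Player $2$ precisely when $c$ is odd, so the theorem is equivalent to the claim that \emph{the player to move at $Q_c$ has a winning strategy}.

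Next I would isolate the tempo that drives the strategy stealing. From $Q_c$ there are two natural routes to the position $R := \{S_1^{m-(c+1)}\wedge S_3\}$: a fast route using the single move $\{(c+1)S_1 \wedge cS_2 \to S_3\}$ (move $2$ with $i=2$; for $k=1$ one uses move $1$, namely $\{cS_1\wedge cS_2 \to S_3\}$, instead), and a slow route $Q_c \to P \to R$ through $P := \{S_1^{m-(c+1)}\wedge S_2^{c+1}\}$ using $\{(c+1)S_1\to S_2\}$ followed by $\{(c+1)S_2 \to S_3\}$ (for $k=1$ the last move is instead $\{(c+1)S_2\to S_3\wedge S_1\}$). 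The two routes reach the \emph{same} position but hand the move to opposite players, so they differ by exactly one tempo. Because the game is finite and drawless, every position $X$ has a definite value $w(X)\in\{\text{mover wins},\text{mover loses}\}$. If $w(R)=\text{mover loses}$, then the player to move at $Q_c$ wins immediately by taking the fast route, leaving the opponent to move at a losing position; this disposes of one case with no further work.

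The main obstacle is the complementary case $w(R)=\text{mover wins}$. Here the natural plan is for the mover at $Q_c$ to play $\{(c+1)S_1\to S_2\}$ to $P$, intending to become the mover at $R$; but the opponent need not cooperate by playing $\{(c+1)S_2\to S_3\}$, so the tempo cannot be invoked directly. I expect to close this case by induction on $n$, observing that $R$ is essentially a fresh copy of the game on $m-(c+1)$ ones carrying one spectator summand $S_3$, together with a monotonicity (domination) lemma asserting that an extra block of low-order tokens never harms the player to move and supplies exactly the spare tempo needed to realize whichever parity is favorable. Making this domination precise — tracking how a high-order spectator can later re-enter play through moves $1$, $2$, $3c$, and $3d$, and checking that the induction has enough room to run — is the technical heart of the proof, and it is here that the hypothesis $n\ge (c+1)^3+(c+1)$ should enter: one needs enough ones to guarantee the forced opening, the availability of the tempo move, and the repeated reductions of the inductive step. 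The residual case analysis for small $k$ (where moves $1$ and $3c$ replace moves $2$ and $3b$) I would treat as routine variants of the same scheme.
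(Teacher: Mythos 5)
Your reduction and your diagnosis of the mechanism are both sound: the first $c$ moves are indeed forced, so the theorem is equivalent to ``the mover at $Q_c=\{S_1^{m}\wedge S_2^{c}\}$ wins,'' and the one-tempo diamond $Q_c\to R$ versus $Q_c\to P\to R$ is exactly the parity device that powers the paper's contradiction. Your first case is also complete: if the mover at $R$ loses, the mover at $Q_c$ wins by playing the fast move. But the complementary case --- $R$ a mover-win --- is the entire content of the theorem, and your plan for it does not constitute a proof. The proposed ``domination lemma'' (an extra block of low-order tokens, or a spectator $S_3$, never harms the player to move) is unproven and is precisely the kind of statement that fails in these games: adding tokens changes the number of available moves and hence can flip the parity of every play, and the $S_3$ is not a spectator at all --- it can later absorb $S_2$'s and other $S_3$'s through moves $1$, $2$, and $3$, which is why $R$ is \emph{not} a fresh copy of the game on $m-(c+1)$ ones. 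Likewise the induction on $n$ is never set up (no base case, no verified inductive step, no explanation of how the bound $(c+1)^3+(c+1)$ feeds it). So the proposal proves the theorem only under an unestablished hypothesis on $w(R)$.

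It is worth seeing how the paper escapes the very obstruction you name (``the opponent need not cooperate''). It never case-splits on $w(R)$. Instead it assumes, for contradiction, that the \emph{non-mover} at the end of the forced opening has a winning strategy, and then traces the game tree for roughly $c^2+3$ turns using two rules: on the opponent's turns, \emph{all} children inherit the assumed winning strategy (so the argument may follow any branch it likes), and on the assumed winner's turns it only follows forced single-child chains, or exploits positions whose \emph{every} child funnels into a common node. Repeating the $c$-move round $c$ times builds up $c$ copies of $S_3$, and at rows $c^2+1$ through $c^2+3$ the position $\{S_1^{a}\wedge S_2\wedge S_4\}$ is reached along two paths of opposite parity; it would then have to be winning for both players, a contradiction. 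That deep traversal --- not a monotonicity principle --- is what replaces your missing case, and it is the part your proposal would still have to supply.
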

This significantly extends the previous results of winning strategies for the $2$-player Zeckendorf game. One interesting question to ask is how quickly a player with a winning strategy can make a mistake to lose the winning strategy, which we address in the following theorem.
\begin{thm}\label{thm4}
For the $(c,k)$-nacci game, where $k > 1$, if the player with the winning strategy makes a mistake as early as turn $c + 1$, the opposing player can steal the winning strategy.
\end{thm}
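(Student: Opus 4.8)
The plan is to prove this by a strategy-stealing argument built around a single parity-flipping identity between two of the moves available at turn $c+1$. First I would pin down the opening: starting from $\{S_1^n\}$, with $n$ in the range of Theorem~\ref{thm3}, the only legal move is $(c+1)S_1 \to S_2$ (rule 3a), and this remains the only option for the first $c$ turns, since rule 2 needs $c$ copies of $S_2$ while rules 1 and 3b--3d need either higher summands or $c+1$ copies of some $S_i$ with $i\ge 2$, none of which are present earlier. Hence turns $1,\dots,c$ are forced, and at turn $c+1$ the position is $P=\{S_1^{\,a}\wedge S_2^{\,c}\}$ with $a=n-c(c+1)$. Because exactly $c$ forced moves have been played, the player to move at $P$ is Player $1$ when $c$ is even and Player $2$ when $c$ is odd---precisely the player to whom Theorem~\ref{thm3} hands the winning strategy. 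So the strategist is the mover at $P$, and since the opening was forced, $P$ is a win for the player to move.

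Next I would record the two options at $P$. Here the hypothesis $k>1$ is essential: since only $S_1$ and $S_2$ appear, rule 1 is unavailable (it would require $S_3,\dots,S_{k+1}$), and exactly two moves exist---call them $A$ (rule 3a, $(c+1)S_1\to S_2$, giving $P_A=\{S_1^{\,a-c-1}\wedge S_2^{\,c+1}\}$) and $B$ (rule 2 with $i=2$, $(c+1)S_1\wedge cS_2\to S_3$, giving $Q=\{S_1^{\,a-c-1}\wedge S_3\}$). The engine of the proof is the identity that $B$ can be simulated by $A$ followed by rule 3b ($(c+1)S_2\to S_3$): applying rule 3b to $P_A$ also yields $Q$. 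Thus the single position $Q$ is reachable from $P$ either in one move (via $B$) or in two moves (via $A$ then 3b), a one-move parity difference that flips which player is to move at $Q$.

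Finally I would close by case analysis on the game value of $Q$, using that $P$ is a next-player win. If $Q$ is a loss for its mover, then $B$ is a winning move (it leaves the opponent at $Q$), so the correct play is $B$; the mistake is $A$, since after $A$ the opponent replies with rule 3b to reach $Q$ with the strategist now to move, and the strategist loses. If instead $Q$ is a win for its mover, then $B$ leaves the opponent at a winning position, so $B$ is the mistake; and because $A$ and $B$ are the only moves at $P$ while $P$ is a win for its mover, the correct play must be $A$. In both cases some move at turn $c+1$ places the opponent in a next-player-win position, so the opponent can steal the winning strategy, as claimed.

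The step I expect to require the most care is the bookkeeping that establishes $P$ as the first genuine branch point and that $A,B$ are its only moves---this is exactly where $k>1$ enters, and where I would double-check that the multiplicities never accidentally enable rule 1 or an early rule-3 collapse for small $c$. The remainder leans on two facts that come essentially for free: Theorem~\ref{thm3} guarantees that $P$ is a win for its mover (the opening being forced), and the identity relating $A$, $B$, and $Q$ supplies the parity flip that converts ``leaving the opponent at $Q$'' into ``being left at $Q$ oneself.'' Assembling these yields the result without ever computing the full game tree.
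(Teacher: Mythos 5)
Your proof is correct, and it rests on the same engine as the paper's: the forced opening of $c$ moves, the two-move branch at turn $c+1$, and the diamond identity (your move $B$ equals move $A$ followed by rule 3b, legal precisely because $k>1$) that flips the parity of who moves at $Q$. The logical packaging, however, is genuinely different. The paper (Lemmas \ref{CI1} and \ref{CI2}) splits into two cases by the parity of $c$, passes to the $(c-1,k)$-nacci game, and argues by contradiction: assuming the move $A$ preserves the strategist's win, it shows the multiset $Q$ would be winning for the strategist (propagated through an auxiliary node, the common child of the $A$-node and the $B$-node, which is the \emph{unique} child of the $B$-node) and simultaneously winning for the opponent (via the equivalent copy of $Q$ one row deeper), a contradiction. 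That argument yields strictly more information than yours: $A$, i.e.\ $(c+1)S_1 \to S_2$, is \emph{always} the mistake, and rule 2 is always the unique correct play. Your case analysis on the game value of $Q$ concludes only that one of $A$, $B$ must be a mistake, without determining which; this suffices for Theorem \ref{thm4} as stated, and it buys a leaner proof: no contradiction, no auxiliary common-child node, no reduction to the shifted $(c-1,k)$-game, and a uniform treatment of both parities of $c$ (Theorem \ref{thm3} is used only to certify that the mover at $P$ is the strategist, so $P$ is a mover-win). If you want to recover the paper's sharper conclusion, note that your Case 2 in fact never occurs: the paper's contradiction shows $Q$ is always a loss for its mover, so $B$ is always the winning move.
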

Finally, we shift our focus to the multiplayer and multialliance Generalized Zeckendorf Games. In the following theorems, we investigate several interesting types of alliances and consider when a player or team is guaranteed a winning strategy.
\begin{thm} \label{DI1}
When $n \geq 3c^2 + 6c + 3$, for any $p \geq c + 2$, no player has a winning strategy in the Multiplayer Generalized Zeckendorf Game. 
\end{thm}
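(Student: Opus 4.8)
The plan is to argue by contradiction through control of the total move count modulo $p$. Recall from \cite{BEFM1} that every play of the $(c,k)$-nacci game from $\{S_1^n\}$ terminates at the same Generalized Zeckendorf decomposition, so a play is determined by its sequence of moves, and the winner of a given play is whoever makes the final move; if the players move in the fixed cyclic order $1,2,\dots,p,1,\dots$ and a play uses $M$ moves in total, the winner is the player congruent to $M$ modulo $p$. Thus player $j$ has a winning strategy precisely when player $j$ can force $M \equiv j \pmod{p}$ against every line of play of the others. To prove the theorem it suffices to show the opposite for each $j$: that the remaining $p-1$ players can always steer the game so that $M \not\equiv j \pmod{p}$.

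The engine of the argument is a flexibility lemma, generalizing the mechanism behind Theorem~\ref{thm1} for the Tribonacci case. I would isolate a local ``parity gadget'': a configuration built from a bounded number of spare copies of $S_1$ (together with the small terms $S_2, S_3, \dots$) from which there are two sequences of legal moves reaching the same resulting multiset, one using an even and the other an odd number of moves, so that a player who owns the choice can change the eventual total $M$ by exactly one without altering the final decomposition. Such gadgets arise from the redundancy in the move list of Definition~\ref{2pGenZeck}: a block of ones can be consolidated by different groupings (for instance, applying $\{(c+1)S_1 \rightarrow S_2\}$ early versus first carrying ones upward through the consecutive-merge move~(1) and then the move~(2) on $(c+1)S_1$), and these routes differ in length. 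The role of the hypothesis $n \geq 3c^2+6c+3 = 3(c+1)^2$ is to guarantee enough spare ones that at least one such gadget is available not merely once but repeatedly, including in the endgame, no matter how player $j$ has spent the material: the factor $(c+1)^2$ is essentially what is consumed in forming and re-forming a gadget, and the factor $3$ supplies the slack needed to keep one live through the final stretch.

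With the gadget in hand I would set up the coalition strategy. Fix $j$ and an arbitrary strategy $\sigma_j$ for player $j$. The condition $p \geq c+2$, i.e. $p-1 \geq c+1$, guarantees that between any two consecutive turns of player $j$ the other players take at least $c+1$ moves, which is enough room to keep a parity gadget available and \emph{armed} on the coalition's side at all times; so the coalition, reacting to whatever $\sigma_j$ does, always retains the option to add or delete one move before the game ends. Concretely, the coalition plays toward the endgame while holding a spare gadget in reserve, and then, once the parity of the number of remaining forced moves is determined, expends or withholds the gadget move so that the final count lands in a residue class different from $j$. Because the coalition controls $p-1$ of every $p$ moves while player $j$ can disturb the count by at most a bounded amount per cycle, the coalition wins this race for control of $M \bmod p$.

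The main obstacle is precisely this race: the gadgets are not free resources but live inside the evolving position, so I must show that player $j$ cannot exhaust, pre-empt, or freeze every parity gadget through clever use of $\sigma_j$. The delicate step is the endgame bookkeeping---proving that when the position is within a bounded number of moves of the Generalized Zeckendorf decomposition, a coalition-controlled gadget still survives, so that the last adjustable move belongs to the coalition rather than to player $j$. Verifying that $3(c+1)^2$ ones together with the $p-1 \geq c+1$ intervening moves suffice to guarantee this surviving gadget against all of $\sigma_j$'s interference is where the constants $3c^2+6c+3$ and $c+2$ are pinned down, and is the crux of the proof.
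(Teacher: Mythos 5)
Your local mechanism is exactly the one the paper uses: the two-move sequence $\{(c+1)S_1 \rightarrow S_2\}$ followed by $\{(c+1)S_2 \rightarrow S_3\}$ (with $\wedge\, S_1$ appended when $k=1$) has the same net effect as the single merged move $\{(c+1)S_1 \wedge c S_2 \rightarrow S_3\}$ (or $\{c S_1 \wedge c S_2 \rightarrow S_3\}$ when $k=1$), so whoever owns the merge can shift the identity of the last mover by one. This is precisely the paper's ``stealing strategy,'' formalized as Property \ref{DI2}.

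The gap is in the global architecture you build on top of this gadget, and it is the step you yourself flag as the crux. You want the coalition to keep a parity gadget ``armed at all times'' and to cash it in during the endgame, once the parity of the remaining forced moves is known. That survival claim is not provable from the hypotheses and is false in general: a gadget needs $c+1$ spare copies of $S_1$ plus a stock of $S_2$'s, and exactly these are exhausted as the position approaches the terminal Generalized Zeckendorf decomposition, where the remaining moves are typically forced; moreover the bound $n \geq 3c^2+6c+3 = 3(c+1)^2$ controls material at the \emph{start} of the game, not near its end, so it cannot underwrite endgame bookkeeping. The paper sidesteps this entirely by running the deviation at the opening, inside a hypothetical winning path, rather than constructing an actual coalition strategy that controls $M \bmod p$ to the finish. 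Concretely: assume player $m$ has a winning strategy; the coalition opens with $c+1$ copies of $\{(c+1)S_1 \rightarrow S_2\}$. If player $m$'s strategy ever answers with the $S_2$-consolidation, the coalition member one seat earlier could instead have merged, reaching the same position one move sooner, so the translated strategy hands the last move to player $m-1$, a contradiction (Property \ref{DI2}). If player $m$ declines, the coalition rebuilds the $S_2$ stock and performs both the consolidation and the merge itself; this is where the case split between $p \geq c+3$ (Lemma \ref{DI3}, where the coalition has $c+2$ consecutive moves) and $p = c+2$ (Lemma \ref{DI4}, where player $m$'s intervening move removes at most $c$ ones against the $c+1$ the coalition created) matters, a distinction your ``enough room'' remark glosses over. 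Counting the at most $3c+3$ applications of $\{(c+1)S_1 \rightarrow S_2\}$ this requires, each costing $c+1$ ones, is exactly where $3(c+1)^2$ comes from; no race for control of $M \bmod p$ and no endgame analysis are ever needed.
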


\begin{thm}\label{DI5}
For any $n \geq 2d^2 + 4d$ and $t \geq c+2$, if each team has exactly $d = t - c$ consecutive players, then no team has a winning strategy in the Team Generalized Zeckendorf Game.
\end{thm}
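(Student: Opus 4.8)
The plan is to adapt, and strengthen, the coalition argument behind Theorem~\ref{DI1}. Since every game terminates at the unique Generalized Zeckendorf decomposition and the winner is whoever makes the final move, a team $T$ wins exactly when the total number of moves $M$ places the last turn inside $T$'s block of $d$ consecutive positions in the cyclic turn order; that is, when $((M-1)\bmod t)$ lands in the length-$d$ interval assigned to $T$. So ``$T$ has a winning strategy'' means ``the players of $T$ can force $((M-1)\bmod t)$ into that interval against every play of the other teams.'' To refute this for all teams, I would fix an arbitrary target team $T$ and construct a cooperative strategy for the union of the remaining teams that forces the last move out of $T$'s interval. Note the opposing coalition is genuine: the $t$ players are split into consecutive blocks of size $d = t-c$, so the number of teams is $t/d = (c+d)/d = c/d + 1$, which is an integer at least $2$ because $c\ge 1$ forces $d = t-c < t$. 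Since $T$ is arbitrary, showing the complement can always deny $T$ the final move proves that no team has a winning strategy.

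The engine is a move-count adjustment mechanism, the same phenomenon used in the proof of Theorem~\ref{DI1}, but pushed from ``avoid one residue'' to ``avoid a block of $d$ consecutive residues.'' When $n$ is large, a player on the move frequently has two available continuations whose eventual total move counts differ by exactly one—for instance, choosing whether to combine immediately or first split and later recombine—so the coalition can nudge the parity of $M$, and by repetition its residue modulo $t$. I would make this precise by proving that, under $n \ge 2d^2 + 4d$, the coalition can guarantee at least $d$ independent such ``shift'' opportunities that persist through the endgame. Having $d$ guaranteed shifts lets the coalition move the final position across a window of more than $d$ consecutive values—exactly the width of $T$'s block—so it can always steer $((M-1)\bmod t)$ into one of the $t-d = c$ positions outside $T$. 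The quadratic threshold reflects the bookkeeping: of the order of $d$ shifts are needed, and each consumes of the order of $d$ units of reserve material from the list before termination, giving a $\Theta(d^2)$ requirement, in direct parallel with the $3(c+1)^2$ bound of Theorem~\ref{DI1} with the block width $d$ now playing the role of the span the coalition must control.

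The main obstacle, and the step demanding genuine care, is establishing that the target team cannot neutralize all of the coalition's shift opportunities. Unlike the two-player setting, the coalition is not contiguous in the turn order—$T$'s block is interleaved with the coalition's players—so between consecutive coalition turns the players of $T$ get $d$ moves of their own with which to both drive the game toward the Zeckendorf decomposition and try to destroy pending adjustments. I would control this with a potential function, for example a weighted count of the multiplicities that still admit a split/merge choice, and show that each coalition turn replenishes this potential at least as fast as a full opposing block can deplete it, so long as the list has not yet reached the terminal decomposition; the hypothesis $n \ge 2d^2 + 4d$ is precisely what keeps the potential positive through the final $d$ shifts. I would then close the argument by checking the boundary arithmetic: that $t \ge c+2$ (hence $d\ge 2$) together with the at-least-two-team count guarantees a nonempty coalition holding $c \ge d$ players, and that the controllable window of endpoints strictly exceeds $d$, so escape from $T$'s block is always possible. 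Assembling these pieces for an arbitrary $T$ yields the theorem.
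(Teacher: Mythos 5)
Your core mechanism is the right one, and it is in fact the same one the paper uses: a coalition player replaces a scheduled $\{(c+1)S_{1} \rightarrow S_{2}\}$ with a combining move such as $\{c S_{1} \wedge c S_{2} \rightarrow S_{3}\}$, which absorbs an $S_{2}$ and deletes one future move from the game; doing this $d$ times slides the final mover backwards past an entire team block, turning a win for team $m$ into a win for team $m-1$ (this is exactly Property \ref{property 2}). However, your proposal rests on a misreading of the game's structure that produces concretely false statements. In Theorem \ref{DI5}, $t$ is the number of \emph{teams} and each team has $d = t-c$ players, so there are $td$ players in total; the last move is governed by $(M-1) \bmod (td)$, not $(M-1) \bmod t$. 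Your claim that ``the number of teams is $t/d = c/d + 1$, an integer at least $2$'' would require $d \mid c$ and is false in general (take $c=3$, $t=5$, $d=2$), and your closing claim that the coalition holds ``$c \ge d$ players'' fails badly: for $c=1$ and $t=6$ we have $d = 5 > 1 = c$. Under the correct reading the coalition has $(t-1)d \ge (c+1)d$ players, so its size was never in doubt, and the boundary arithmetic you propose to verify is checking the wrong quantities.

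The second gap is that the genuinely hard part of the theorem is left as a sketch. Since team $T$'s block of $d$ consecutive players recurs once per cycle of $td$ turns, the coalition's $d$ substitutions must be embedded in move sequences that $T$'s interleaved turns cannot invalidate, and this is where all the work lies. The paper does it with explicit sequences: Property \ref{property 2} isolates a forbidden pattern of $3d$ steps (for $c=1$) or $(c+2)d$ steps (for $c>1$) in which every substituting player is off team $m$; Lemma \ref{DI6} handles $t \ge c+3$, where the other teams supply $d^2 \ge (c+1)d$ consecutive players, enough to run the whole pattern uninterrupted; and Lemma \ref{DI7} handles the tight case $t = c+2$, where team $m$'s own interleaved moves can destroy $S_{2}$'s, and one must count how many $S_{2}$'s those moves remove against how many the pattern creates — this counting is what produces the quadratic thresholds on $n$. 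Your ``potential function'' paragraph names this obstacle but does not resolve it: no potential is defined, no replenishment inequality is proved, and the bound $n \ge 2d^2 + 4d$ is supported only by an order-of-magnitude heuristic. As written, the proposal is a plan whose two load-bearing steps are, respectively, wrong (the arithmetic of the team structure) and missing (the interference analysis).
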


\begin{thm}\label{DI8}
Let $p \geq 6$ and $c = 1$. If there are two teams, one with $p - 2$ players  and the other with two players, then the larger team has a winning strategy for $n \geq 36$ in the Team Generalized Zeckendorf Game. 
\end{thm}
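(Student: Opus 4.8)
The plan is to reduce the $p$-player team game to a contest between two ``super-players'' and then show that the larger team can always steer the game so that it terminates on one of its own turns. Fix the cyclic turn order and recall (as in Theorem~\ref{DI5}) that each team occupies a block of consecutive seats, so play proceeds as a block of $p-2$ moves by the large team $L$, then a block of $2$ moves by the small team $S$, then another $L$-block, and so on. The team that makes the final move wins, so $L$ wins precisely when the terminal position is reached during an $L$-block. Since the $(1,k)$-nacci game is finite, has perfect information, and produces a unique winner in every play (exactly one player makes the last move), Zermelo's theorem guarantees that one of the two teams has a winning strategy; it therefore suffices to exhibit a strategy for $L$, equivalently to show that $S$ cannot force the last move into an $S$-block.

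The mechanism $L$ will exploit is the flexibility in the length of a play when $c=1$. The move set of Definition~\ref{2pGenZeck} contains \emph{combining} moves, which strictly decrease the number of summands in the list, together with the \emph{splitting} moves $\{2S_{k+1}\rightarrow S_{k+2}\wedge S_1\}$ and $\{2S_i\rightarrow S_{i+1}\wedge S_{i-k-1}\}$ for $i>k+1$, which preserve the number of summands while still decreasing the weighted index-sum monovariant that forces termination. Consequently the total number of moves $M$ in a play is not fixed: $L$ can lengthen a line of play by inserting splits or shorten it by preferring combines. I would make this quantitative by setting $d_{\min}(\text{state})$ equal to the least number of further moves needed to reach the Zeckendorf decomposition, and observing two facts: a single move lowers $d_{\min}$ by at most one (since the current position can always be routed through the move just played), while a splitting move can be chosen so as not to lower it at all. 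The first fact limits what the two-move team $S$ can accomplish, the second gives $L$ a stall, and together they are the dial $L$ needs to control where the endgame falls relative to the block structure.

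With these tools in hand, $L$'s strategy is to maintain the invariant that whenever control passes from $L$ to $S$, the position still satisfies $d_{\min}\ge 3$, so that $S$, having only two moves, cannot reach the terminal position and must return control to $L$ with the game still live. Because $p\ge 6$ gives $L$ at least $p-2\ge 4$ moves per block --- strictly more than the two moves of $S$ --- $L$ has enough room in each block both to make progress and, when the position threatens to become too close to termination, to burn a move with a splitting stall; the bound $n\ge 36$ should guarantee that the decomposition passes through indices large enough ($i> k+1$) that such splitting moves are actually available throughout the critical phase. Maintaining the invariant until $L$ reaches a block in which $d_{\min}$ is at most its number of remaining moves then lets $L$ finish the game itself, inside an $L$-block, which is a win.

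The hard part will be proving that this invariant is genuinely maintainable, that is, ruling out positions in which $L$ is forced either to finish too early (handing the last move to $S$) or to leave $S$ a position with $d_{\min}\le 2$. This requires a careful analysis of the endgame positions of the $(1,k)$-nacci game, together with a bookkeeping argument showing that for $n\ge 36$ there is always enough ``material'' at indices exceeding $k+1$ to supply a stalling move exactly when $L$ needs one; I expect the constant $36$ to emerge from bounding how many summands the final $L$-block must consume in order simultaneously to stall and to finish on its own turn. The most delicate case is the behavior on the initial segment $i\le k$ of the recurrence, where splitting moves are unavailable and $L$'s only option is to combine; showing that the game cannot be funneled entirely into this low-index regime before the endgame is precisely where the threshold $n\ge 36$ is used.
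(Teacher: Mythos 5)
Your framework (Zermelo determinacy plus the invariant that the small team never receives a position with $d_{\min}\le 2$) is logically coherent as far as it goes, but the step that carries all the weight is missing: you never prove that the large team can actually maintain the invariant, and this is not a routine detail --- it is the entire theorem. Concretely, your strategy requires that whenever the large team cannot finish inside its own block, at least two of its $p-2$ moves can be ``stalls'' that do not decrease $d_{\min}$. In the endgame of a $(1,k)$-nacci game this can fail: splitting moves need two copies of some $S_i$, and positions near the Zeckendorf decomposition typically have all multiplicities equal to $0$ or $1$, so every legal move is a combining move and may decrease $d_{\min}$ by exactly $1$. If the small team can steer play so that a large-team block begins at such a position with $d_{\min}=p-1$, your invariant collapses and the small team makes the last move; nothing in your proposal rules this out. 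The appeal to $n\ge 36$ does not repair it, since largeness of $n$ governs the opening (how many $1$'s are available), not the endgame, whose move options depend on the arithmetic structure of the Zeckendorf decomposition of $n$ rather than its size. Likewise your claim that ``a splitting move can be chosen so as not to lower $d_{\min}$ at all'' is asserted, not proved, and is not obvious.

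There is also a scope gap: you assume both teams occupy consecutive blocks of seats, citing Theorem \ref{DI5}, but Theorem \ref{DI8} makes no such assumption, and most of the paper's proof is devoted to non-consecutive arrangements (the larger team split into two separated parts). The paper avoids endgame analysis entirely by arguing non-constructively with the stealing strategy: assume the two-player team has a winning strategy whose last mover is Player $q$; then consecutive large-team players can replace the pair of moves $2S_1\to S_2$, $2S_2\to S_3$ by the single move $S_1\wedge S_2\to S_3$ (or $2S_1\wedge S_2\to S_3$ for $k>1$), shortening any such winning line by one or two moves and shifting the final move to Player $q-1$ or $q-2$, who lies on the larger team --- a contradiction; determinacy then yields the result, with the consecutive $p=6$ arrangements handled by Lemmas \ref{DI4} and \ref{DI7}. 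The hypothesis $n\ge 36$ enters only to guarantee enough $1$'s for the prescribed sequences of moves, not to control the endgame. To salvage your approach you would need either a genuine stalling lemma (a full endgame analysis of which positions admit $d_{\min}$-preserving moves) or to switch to a length-shifting contradiction of this kind, which sidesteps $d_{\min}$ altogether.
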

\section{The Multiplayer Tribonacci Game}
We start our investigation of $(c, k)$-nacci games with the Tribonacci game, which is the $(1, 2)$-nacci game. We define the Tribonacci numbers as $T_{1} = 1$, $T_{2} = 2$, $T_{3} = 4$, $T_{i+1} = T_{i} + T_{i-1} + T_{i-2}$. The Tribonacci game is playable since it is a type of $(c, k)$-nacci game, and the types of moves allowed follow the form described for Generalized Zeckendorf games with $c = 1$ and $k = 2$. We first explore winning strategies of the multiplayer Tribonacci Game.\\
\\
\textbf{Theorem \ref{thm1}.}
\textit{
For the multiplayer Tribonacci game, when $n \geq 7$, for any $p \geq 3$ no player has a winning strategy.
}

In general, in a finite multiplayer game consisting of $p\geq3$ players, if Player $k$ has a winning strategy (where $1\leq k\leq p$), then no matter which steps the other $p-1$ players take, there is always a combination of moves for which Player $m$ wins this game.

The following proof of Theorem \ref{thm1} uses a technique which we call a stealing strategy. A stealing strategy means that if we first suppose some Player $k$ has a winning strategy (where $1\leq k \leq p$), then the other $p-1$ players can consider two combinations of moves of different lengths that finish at the same position. For one such combination one of the $p-1$ players will end up in the same position that Player $k$ had a winning strategy, and by stealing Player $k$'s winning strategy we get a contradiction. This technique is used in proving many Theorems and Lemmas of this paper, and we now use it to prove Theorem \ref{thm1}.

\begin{proof}
Note: In all the following proofs of this section, Player $0 \equiv$ Player $p$ $($under $\bmod$ $p)$, and the player following Player $p$ is Player $1$.

To prove Theorem \ref{thm1}, we introduce the following property.

\begin{pro}\label{TribProp}
Suppose Player $m$ has a winning strategy $(1\leq m \leq p)$. For any $p\geq 3$, any winning path of Player $m$ does not contain the following $3$ consecutive steps unless Player $m$ is the player who takes step $2$ below.
\end{pro}
\begin{equation}\nonumber
\begin{split}
\text {Step 1:} & \ 1 \wedge 1 \rightarrow 2.\\
\text {Step 2:} & \ 1 \wedge 1 \rightarrow 2.\\
\text {Step 3:} & \ 2 \wedge 2 \rightarrow 4.\\
\end{split}
\end{equation}

\begin{proof}
% [\textbf{Proof of Property \ref{p1}}]
Suppose Player $m$ has a winning strategy and there is a winning path that contains these $3$ consecutive steps. Then there exists a Player $a$ where $1\leq a\leq p,\ a \neq m$, such that Player $a-1 \pmod p$ can take step $1$, Player $a$ can take step $2$ and Player $a+1 \pmod p$ can take step $3$.

Note that instead of doing $\{1 \wedge 1 \rightarrow 2\}$, Player $a$ can do $\{1 \wedge 1 \wedge 2 \rightarrow 4\}$. Then Player $m-1 \pmod p$ has a winning strategy, which is a contradiction.

Therefore, by using the stealing strategy, Property \ref{TribProp} holds.
\end{proof}

We now prove Theorem \ref{thm1} by splitting it into the following two lemmas.

\begin{lem}\label{lem2.1.1}
When $n \geq 11$, for any $p \geq 4$ no player has a winning strategy.
\end{lem}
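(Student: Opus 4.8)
The plan is to argue by contradiction using the stealing technique, with Property \ref{TribProp} as the engine. Assume some Player $m$ has a winning strategy $\sigma_m$ for a fixed $p\geq 4$ and $n\geq 11$. Since "Player $m$ has a winning strategy" means $m$ wins against every choice of the other $p-1$ players, I am free to prescribe those players' moves adversarially while $m$ answers according to $\sigma_m$; the resulting play is necessarily a winning path for $m$. My goal is to prescribe the other players' opening moves so that the sequence of move types contains the forbidden block of Property \ref{TribProp}, namely $1\wedge 1\rightarrow 2$, then $1\wedge 1\rightarrow 2$, then $2\wedge 2\rightarrow 4$, with the middle move ("Step $2$") made by a player other than $m$. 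This produces a winning path of Player $m$ violating Property \ref{TribProp}, the desired contradiction.

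First I would record the shape of the opening. From $\{1^n\}$ the only legal move is $1\wedge 1\rightarrow 2$, after which the position is $\{1^{n-2}\wedge 2\}$; from then on each player may play $1\wedge 1\rightarrow 2$, $1\wedge 1\wedge 2\rightarrow 4$, or (once two $2$'s are present) $2\wedge 2\rightarrow 4$. Index turns cyclically and call a turn \emph{good} if its player is not $m$. Because $p\geq 4$, the good turns occur in runs of length $p-1\geq 3$ between consecutive turns of Player $m$, so there is a block of three consecutive good turns: if $m\geq 4$ I take turns $\{1,2,3\}$, and if $m\in\{1,2,3\}$ I take $\{m+1,m+2,m+3\}$, which sits inside the first long good run precisely because $p-1\geq 3$. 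It is exactly here that $p=3$ must be excluded, since with three players every good run has length $2$ and no such block exists; this is why the companion lemma is needed for $p=3$.

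Second, on this block I have the three (non-$m$) players perform exactly $1\wedge 1\rightarrow 2$, then $1\wedge 1\rightarrow 2$, then $2\wedge 2\rightarrow 4$, while on every earlier turn a non-$m$ player also plays $1\wedge 1\rightarrow 2$ and Player $m$ answers by $\sigma_m$. As all three block moves are forced by players distinct from $m$, the move-type sequence contains Steps $1,2,3$ consecutively with Step $2$ on a good turn, contradicting Property \ref{TribProp} as soon as I verify the three prescribed moves are legal when played. For legality I count $1$'s: each opening move removes at most two $1$'s and no move in the block's run-up can consume more than two, so after the at most three preceding moves at least $n-6$ ones survive, and the two $1\wedge 1\rightarrow 2$ moves of the block then both fire (using four more ones) and manufacture the two $2$'s needed to close with $2\wedge 2\rightarrow 4$. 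The tightest alignment is $m\equiv 3\pmod p$, where the block is forced as late as $\{4,5,6\}$; this is the case that pins down the threshold and yields the stated bound $n\geq 11$.

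The hard part will be the bookkeeping of this last step. Because Player $m$'s moves inside the opening are not under my control, I must check that for \emph{every} residue class of $m\bmod p$ and \emph{every} legal reply $\sigma_m$ may give on $m$'s early turns, enough $1$'s (and hence enough $2$'s) remain for the three block moves to be legal in order. This reduces to a short case analysis on $m\bmod p$ together with the uniform observation that no opening move consumes more than two $1$'s, and it is precisely this worst-case count (realized when $m$ sits near the front of the turn order) that fixes the constant in $n\geq 11$.
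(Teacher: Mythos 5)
Your proposal is correct and takes essentially the same route as the paper: the identical case split ($m\geq 4$ uses the block on turns $1,2,3$; $m\leq 3$ uses turns $m+1,m+2,m+3$ right after Player $m$'s first move), with three consecutive non-$m$ players executing the forbidden sequence of Property \ref{TribProp} to force the contradiction. Your explicit count of surviving $1$'s to verify legality of the block is a detail the paper leaves implicit (and in fact shows $n\geq 10$ already suffices for this argument), but it does not change the approach.
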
 

\begin{proof}
% [\textbf{Proof of Lemma \ref{lem2.1.1}}.]
Suppose Player $m$ has a winning strategy $(1\leq m\leq p)$. Consider the following two cases.

Case 1: If $m\geq 4$, then players $1, 2$, and $3$ can do the following.
\begin{equation}\nonumber
\begin{split}
\text {Player 1:} & \ 1 \wedge 1 \rightarrow 2.\\
\text {Player 2:} & \ 1 \wedge 1 \rightarrow 2.\\
\text {Player 3:} & \ 2 \wedge 2 \rightarrow 4.\\
\end{split}
\end{equation}
This contradicts Property \ref{TribProp}, so Player $m$ does not have winning strategy for any $m\geq 4$.

Case 2: If $m\leq 3$, then after Player $m$'s first move, players $m+1, m+2, m+3$ can do the following.
\begin{equation}\nonumber
\begin{split}
\text {Player $m + 1$:} & \ 1 \wedge 1 \rightarrow 2.\\
\text {Player $m + 2$:} & \ 1 \wedge 1 \rightarrow 2.\\
\text {Player $m + 3$:} & \ 2 \wedge 2 \rightarrow 4.\\
\end{split}
\end{equation}
This contradicts Property \ref{TribProp}, so Player $m$ does not have winning strategy for any $m\leq 3$.

By Cases $1$ and $2$, Lemma \ref{lem2.1.1} is proved.
\end{proof}

\begin{lem}\label{lem2.1.2}
When $n \geq 13$, for $p=3$ no player has a winning strategy.
\end{lem}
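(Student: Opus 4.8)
The plan is to assume, for contradiction, that some Player $m\in\{1,2,3\}$ has a winning strategy $\Sigma$, and to run the stealing strategy around a single length-changing move swap. The engine is the identity underlying Property \ref{TribProp}: from any position carrying a spare $2$ together with at least two $1$'s, the opening $[1\wedge1\to2],[1\wedge1\to2],[2\wedge2\to4]$ and the opening $[1\wedge1\to2],[1\wedge1\wedge2\to4]$ both reach the common position $P=\{1^{n-4}\wedge 4\}$, but in $3$ versus $2$ moves. Because $p=3$, a difference of one move rotates the player to move at $P$ by one, so the two lines present $P$ with different players on the clock. The whole argument hinges on arranging that the one move by which the two lines differ is made by an \emph{opponent} of $m$, so that both lines remain consistent with $m$ playing $\Sigma$ all the way to $P$; replaying $m$'s winning continuation from $P$ in the shorter line then hands the win to a player other than $m$, contradicting the uniqueness of the winning player.

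First I would dispose of $m=1$, which is clean because from the all-$1$'s start the only legal move is $1\wedge1\to2$, so Player $1$'s first move is forced and identical in both lines. Taking Line A $=[\mathrm{P1}\!:1\wedge1\to2],[\mathrm{P2}\!:1\wedge1\to2],[\mathrm{P3}\!:2\wedge2\to4]$ and Line B $=[\mathrm{P1}\!:1\wedge1\to2],[\mathrm{P2}\!:1\wedge1\wedge2\to4]$, both reach $P$, with Player $1$ to move in Line A (three moves) and Player $3$ to move in Line B (two moves); every move in which the lines differ belongs to the opponents P2 and P3. Since Line A is consistent with $\Sigma$, Player $1$ wins from $P$ with Player $1$ to move; transplanting that continuation into Line B lets Player $3$ force the final move while Player $1$ still follows $\Sigma$, the desired contradiction. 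The hypothesis $n\ge 13$ comfortably supplies the $1$'s these moves consume.

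The cases $m=2$ and $m=3$ are the crux, and they are exactly why $p=3$ needs the larger threshold $n\ge 13$ rather than the $n\ge 11$ of Lemma \ref{lem2.1.1}. Now $m$'s early moves are no longer forced — at $m$'s first turn the list already contains a $2$, so $\Sigma$ may choose either $1\wedge1\to2$ or $1\wedge1\wedge2\to4$ — and the two lines cannot share $m$'s choices for free. I would branch on $m$'s first move and, within each branch, place the single length-one swap entirely among the opponents' later turns, keeping $m$'s intervening moves identical in the two lines, and then rerun the transplant-and-steal step. The main obstacle is precisely this placement: since any three consecutive turns cycle through all three players, one cannot house a full three-move swap block inside the two opponents' turns, so the swap must be realized as a lone opponent move (the choice $1\wedge1\wedge2\to4$ in place of $1\wedge1\to2$) scheduled after $m$'s forced or already-fixed moves. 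Verifying, in each branch and for each $m\in\{2,3\}$, that such an opponent turn exists with a spare $2$ and at least two $1$'s still on the board is where the surplus guaranteed by $n\ge 13$ is spent, and it is the step I expect to require the most care.
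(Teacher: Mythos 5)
Your overall framework---the stealing swap that trades the two-move block $\{1 \wedge 1 \rightarrow 2\}$, $\{2 \wedge 2 \rightarrow 4\}$ for the single move $\{1 \wedge 1 \wedge 2 \rightarrow 4\}$, shifting the mover at the common position by one---is the same engine the paper uses, and your $m=1$ case is correct: it is exactly Property \ref{TribProp} applied at the opening, where Player $1$'s only move is forced. But for $m\in\{2,3\}$, which you rightly identify as the crux, your proposal has a genuine gap: you never say how to neutralize the move Player $m$ makes in between the opponents' turns, and your stated plan---branch on $m$'s first move and keep ``$m$'s intervening moves identical in the two lines''---does not by itself accomplish this. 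Player $m$'s moves are dictated by the unknown strategy $\Sigma$, so the opponents cannot schedule a swap around moves they cannot predict; branching on $m$'s finitely many options still leaves you needing, in each branch, an argument you have not supplied, and you explicitly defer it (``the step I expect to require the most care'').

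The paper's resolution, which is the idea missing from your write-up, is to sandwich $m$'s unknown move between the setup and the swap and then argue by dichotomy. After Player $m$'s first move (from the start of the game if $m=3$), the two opponents---consecutive, since $p=3$---each play $\{1 \wedge 1 \rightarrow 2\}$; then $m$ plays whatever $\Sigma$ dictates (step 3); then the opponents get two more consecutive turns. If $m$'s step-3 move is $\{2 \wedge 2 \rightarrow 4\}$, then steps 1--3 form exactly the pattern forbidden by Property \ref{TribProp} with the middle move made by an opponent, a contradiction. Otherwise, $m$'s move removes at most one of the two $2$'s just created, so at least one $2$ survives; the opponents then play $\{1 \wedge 1 \rightarrow 2\}$ followed by $\{2 \wedge 2 \rightarrow 4\}$ in the reference line, and the deviation is the lone swap $\{1 \wedge 1 \wedge 2 \rightarrow 4\}$ at the first of these two turns. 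Because the deviation occurs \emph{after} $m$'s step-3 move, the two lines agree on every move up to the swap---including all of $m$'s moves---automatically, which is precisely the consistency your plan was struggling to arrange. This dichotomy (either $m$ completes the forbidden pattern, or $m$ leaves a spare $2$) is what lets the paper treat all $m\in\{1,2,3\}$ uniformly, with the only arithmetic being that the longest line consumes few enough $1$'s, which is where the hypothesis on $n$ enters. (The final replay step, in which the saved move shifts the last mover from $m$ to $m-1 \pmod 3$, is asserted at the same informal level in both your proposal and the paper, so that is not a point of difference.)
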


\begin{proof}
% [\textbf{Proof of Lemma \ref{lem2.1.2}}]
Suppose Player $m$ has a winning strategy $(1\leq m\leq 3)$. After Player $m$'s first move, Players $m+1$ and $m+2$ can do the following $($if $m=3$, we can start the following process from the first step of the game$)$.

\begin{equation}\nonumber
\begin{split}
\text {Step $1$: Player $m + 1$:} & \ 1 \wedge 1 \rightarrow 2.\\
\text {Step $2$: Player $m + 2$:} & \ 1 \wedge 1 \rightarrow 2.\\
\text {Step $3$: Player $m$:} & \ \text {Player $m$ can do anything.}\\
\end{split}
\end{equation}
% We prove this lemma in $2$ cases:
Note that if Player $m$ does $\{2 \wedge 2 \rightarrow 4\}$, then these three moves violate Property \ref{TribProp}, which is a contradiction.

If Player $m$ does anything else other than $\{2 \wedge 2 \rightarrow 4\}$, then after Player $m$'s first move, the other two players can do the following (continuing after the first $3$ steps listed above with $2$ more steps; if $m=3$, Player $m+1$ is Player $1$).
\begin{equation}\nonumber
\begin{split}
\text {Step $1$: Player $m + 1$:} & \ 1 \wedge 1 \rightarrow 2.\\
\text {Step $2$: Player $m + 2$:} & \ 1 \wedge 1 \rightarrow 2.\\
\text {Step $3$: Player $m$:} & \ \text {player $m$ can do anything.}\\
\text {Step $4$: Player $m + 1$:} & \ 1 \wedge 1 \rightarrow 2.\\
\text {Step $5$: Player $m + 2$:} & \ 2 \wedge 2 \rightarrow 4.\\
\end{split}
\end{equation}
Note that Step $3$ removes at most one $2$, but Step $1$ and Step $2$ generate two $2$'s in total, so there will be at least one $2$ remaining after step $3$. Therefore, Player $m+1$ can do $\{1 \wedge 1 \wedge 2 \rightarrow 4\}$ instead in Step $4$. By doing so, now Player $m-1\pmod p$ has winning strategy, which is a contradiction.

Thus by using the stealing strategy, Lemma \ref{lem2.1.2} is proved.
\end{proof}

By Lemmas \ref{lem2.1.1} and \ref{lem2.1.2}, and brute force computations for $7\leq n\leq 12$, Theorem \ref{thm1} is proved.
\end{proof}

%%%%%%%%%%%%%%%%%%%%%%%%%%%%%%%%%%%%%%%%%%%%%%%%%%%%%%%%
%%%%%%%%%%%%%%%%%%%%%%%%%%%%%%%%%%%%%%%%%%%%%%%%%%%%%%%%
%%%%%%%%%%%%%%%%%%%%%%%%%%%%%%%%%%%%%%%%%%%%%%%%%%%%%%%%
\section{Winning Strategies for Two-Player Generalized Zeckendorf Games}

Expanding off our work on the Tribonacci Game, we now prove several results on the more general $(c,k)$-nacci Zeckendorf Game. We start with proving the winning strategy of two-player $(c,k)$-nacci Zeckendorf Game.\\
\\
\textbf{Theorem \ref{thm3}.}
\textit{
In a $(c,k)$-nacci game, for any $c\geq 1, k \geq 1, n\geq (c+1)^3+(c+1)$, when $c$ is odd, player $2$ always has a winning strategy; when $c$ is even, player $1$ always has a winning strategy.
}

\begin{proof}
%We prove Theorem \ref{thm3} with the following three lemmas. 
To help the reader better understand this theorem and why the value of $k$ does not affect which player has the winning strategy, we first prove a special case of this theorem for the Tribonacci Game as described in the following lemma.\\
\\
\textbf{Lemma \ref{2pTrib}.}
\textit{
For all $n > 9$, Player $2$ has the winning strategy for the two-player Tribonacci Game.
}

\begin{figure}
    \centering
    \begin{tikzcd}[ampersand replacement=\&]
	player \& turn \\
	1 \& 1 \&\& \textcolor{rgb,255:red,214;green,92;blue,92}{1^n} \\
	2 \& 2 \&\& \textcolor{rgb,255:red,214;green,92;blue,92}{1^{(n-2)} \wedge 2} \&\& {} \\
	1 \& 3 \&\& \textcolor{rgb,255:red,214;green,92;blue,92}{\ 1^{(n-4)} \wedge 4} \&\&\& {} \\
	2 \& 4 \& {} \& \textcolor{rgb,255:red,214;green,92;blue,92}{1^{(n-6)} \wedge 2 \wedge 4} \\
	1 \& 5 \& \textcolor{rgb,255:red,214;green,92;blue,92}{1^{(n-7)} \wedge 7} \& {} \& \textcolor{rgb,255:red,214;green,92;blue,92}{1^{(n-8)} \wedge 4^{(2)}} \\
	2 \& 6 \&\& \textcolor{rgb,255:red,214;green,92;blue,92}{1^{(n-10)} \wedge 2 \wedge 4^{(2)}} \& \textcolor{rgb,255:red,92;green,92;blue,214}{1^{(n-7)} \wedge 7} \\
	1 \& 7 \&\& \textcolor{rgb,255:red,92;green,214;blue,92}{1^{(n-9)} \wedge 2 \wedge 7}
	\arrow[no head, from=2-4, to=3-4]
	\arrow[no head, from=3-4, to=4-4]
	\arrow[no head, from=4-4, to=5-4]
	\arrow[no head, from=5-4, to=6-3]
	\arrow[no head, from=5-4, to=6-5]
	\arrow[no head, from=6-5, to=7-4]
	\arrow[no head, from=6-5, to=7-5]
	\arrow[no head, from=7-4, to=8-4]
	\arrow[no head, from=7-5, to=8-4]
    \end{tikzcd}
    \caption{Tribonacci tree depicting the proof of Lemma \ref{2pTrib}. Red indicates a winning strategy for Player $1$, and blue indicates a winning strategy for Player $2$. Green indicates a winning strategy for both players, which is a contradiction.}
    \label{fig:TribTree}
\end{figure}
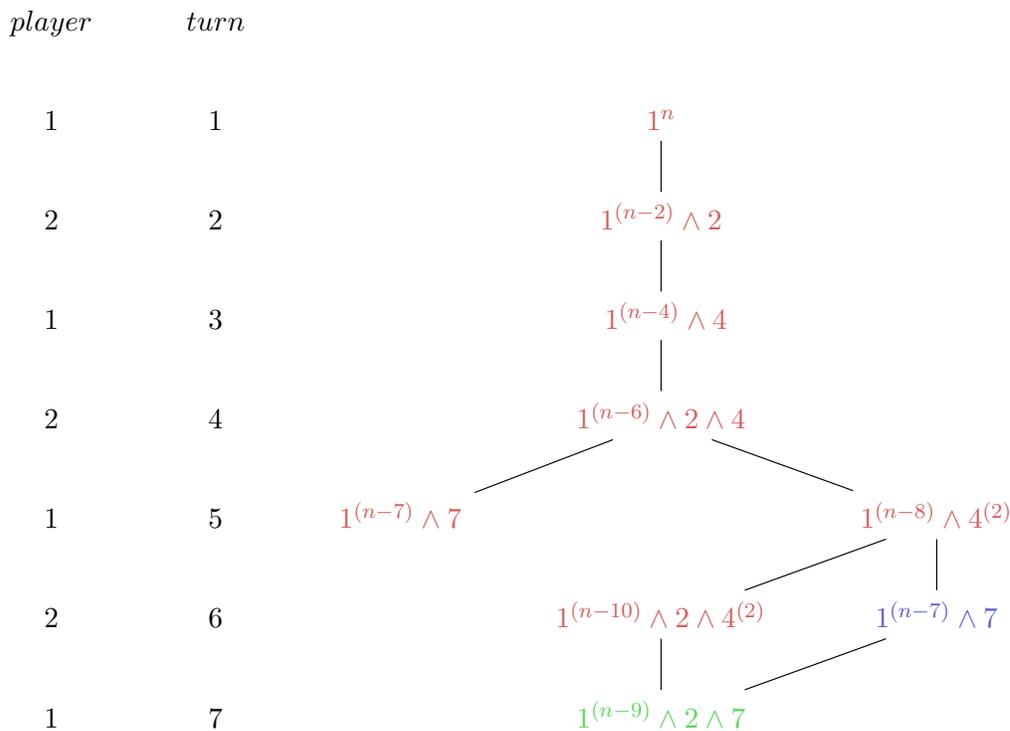

\begin{proof}
Suppose for contradiction that Player $1$ has a winning strategy for $n > 9$. Fix $n > 9$, then $\{1^{(n)}\}$ has a winning strategy for Player $1$ by assumption. Player $1$ must also have a winning strategy for $\{1^{(n-2)} \wedge 2\}$ in row $2$ because it is the only child of $\{1^{(n)}\}$. Since row $2$ is Player $2$'s turn but Player $1$ has a winning strategy, both nodes in row $3$ must also have winning strategies for Player $1$. In row $4$, $\{1^{(n-6)} \wedge 2 \wedge 4\}$ must have a winning strategy for Player $1$ since it is the only child of $\{1^{(n-4)} \wedge 4\}$ in row $3$. Since row $4$ is Player $2$'s turn but Player $1$ has a winning strategy, both children must have winning strategies for Player $1$. This includes $\{1^{(n-7)} \wedge 7\}$ in row $5$, so the equivalent node in row $6$ must have a winning strategy for Player $2$, and its only child $\{1^{(n-9)} \wedge 2 \wedge 7\}$ in row $7$ must also have a winning strategy for Player $2$. In row $6$, $\{1^{(n-10)} \wedge 2 \wedge 4^{(2)}\}$ must have a winning strategy for Player $1$ since at least one child of $\{1^{(n-8)} \wedge 4^{(2)}\}$ in row $5$ must have a winning strategy for Player $1$. However, since row $6$ is Player $2$'s turn and Player $1$ has a winning strategy for $\{1^{(n-10)} \wedge 2 \wedge 4^{(2)}\}$, its child $\{1^{(n-9)} \wedge 2 \wedge 7\}$ in row $7$ must also have a winning strategy for Player $1$. This is a contradiction, so Player $2$ has a winning strategy for $n > 9$. 
\end{proof}

We now prove Theorem \ref{thm3} using the following three lemmas for the cases $k=1$, $k=2$, and $k \geq 3$. In all of the proofs below, we let $d=c-1$.

\begin{lem}\label{(c,1)}
In a $(c,1)$-nacci game, for any $c\geq 1, n\geq (c+1)^3+(c+1)$, when $c$ is odd, Player $2$ always has a winning strategy; when $c$ is even, Player $1$ always has a winning strategy.
\end{lem}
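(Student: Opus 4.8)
The plan is to generalize the game-tree and strategy-stealing argument used for Lemma \ref{2pTrib} to the $(c,1)$-nacci game, tracking for each reachable position which player can force the win and propagating these labels through the opening of the game. Write $S_1 = 1$, $S_2 = c+1$, and $S_3 = c^2 + 2c$ for the relevant terms. The first observation is that the opening is forced: from $\{S_1^n\}$ the only legal move is $\{(c+1)S_1 \rightarrow S_2\}$, and as long as fewer than $c$ copies of $S_2$ are present this remains the unique legal move (the combine $\{cS_1 \wedge cS_2 \rightarrow S_3\}$ needs $c$ copies of $S_2$, and the splits need $c+1$). Hence rows $1$ through $c+1$ form a forced chain ending at the position $P_0 = \{S_1^{\,n-c(c+1)} \wedge S_2^{\,c}\}$, the first node with more than one child. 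Since row $t$ is Player $1$'s turn exactly when $t$ is odd, the turn at $P_0$ (row $c+1$) belongs to Player $1$ when $c$ is even and to Player $2$ when $c$ is odd; this is the mechanism by which the parity of $c$, and not the value of $k$, decides the winner.

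From $P_0$ I would build the decisive fork. The move $\{cS_1 \wedge cS_2 \rightarrow S_3\}$ sends $P_0$ in one step to $Q = \{S_1^{\,n-c^2-2c} \wedge S_3\}$, while first playing one more $\{(c+1)S_1 \rightarrow S_2\}$ and then the split $\{(c+1)S_2 \rightarrow S_3 \wedge S_1\}$ reaches the \emph{same} position $Q$ in two steps. Thus $Q$ occurs in the tree at two rows of opposite parity. Assuming for contradiction that the player contrary to the claim has a winning strategy (Player $1$ when $c$ is odd, Player $2$ when $c$ is even), the forced opening propagates that player's winning label down to $P_0$; because it is the opponent's turn there, every child of $P_0$ inherits the label, and then the turn-shift identity, that a position from which the mover wins at one row forces the mover to win at the neighboring row of opposite parity, pins down conflicting labels at the two copies of $Q$, exactly as the green node produces a contradiction in Figure \ref{fig:TribTree}. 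Carrying this out in both parities of $c$ yields the stated conclusion.

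The main obstacle is the bookkeeping at branching nodes. Propagating a label is immediate in the ``non-mover wins'' direction (all children inherit it) and through unique-child positions, but in the ``mover wins'' direction one only learns that \emph{some} child carries the label, so the second branch of the fork must be made to reconverge to $Q$ through positions whose relevant move is forced, or the labels must instead be compared via the turn-shift identity on a position that provably appears at two adjacent rows. Verifying that every move invoked along both branches is legal, in particular that enough copies of $S_1$ survive the opening and the fork so that the claimed children genuinely exist and the early positions really are forced, is what requires the hypothesis $n \geq (c+1)^3 + (c+1)$; the cubic term is the room needed to reach and resolve the fork around $S_3 \approx (c+1)^2$ while keeping a surplus of $1$'s. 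Once the contradiction is established for large $n$, any finitely many residual cases below the bound can be checked directly.
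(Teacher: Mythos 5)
Your opening analysis is sound: the game is forced through row $c+1$, the first branching node is $P_0 = \{S_1^{\,n-c(c+1)} \wedge S_2^{(c)}\}$, the parity mechanism you identify is exactly right, and both of your paths do land on $Q = \{S_1^{\,n-c^2-2c}\wedge S_3\}$, at rows $c+2$ and $c+3$ respectively. The gap is in closing the contradiction. From path A you correctly get ``mover wins at $Q$.'' To contradict this you must place the assumed winner's label on $Q$ at row $c+3$; but the unique parent of $Q$ at that row is $R=\{S_1^{\,n-c^2-2c-1}\wedge S_2^{(c+1)}\}$, and at $R$ (in both parity cases) it is the assumed winner's own turn, so label propagation runs in the mover-wins direction: you learn only that \emph{one} of $R$'s three children --- $Q$ (via the split), $\{S_1^{\ast}\wedge S_2^{(c+2)}\}$ (via another $(c+1)S_1 \rightarrow S_2$), or $\{S_1^{\ast}\wedge S_2\wedge S_3\}$ (via the combine $cS_1 \wedge cS_2 \rightarrow S_3$) --- carries the label. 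The combine case cannot be ruled out at this depth: $\{S_1^{\ast}\wedge S_2\wedge S_3\}$ at row $c+3$ must carry the assumed winner's label \emph{anyway}, because it is the only child of $Q$ at row $c+2$; so the hypothesis you are trying to refute is consistent with every label derivable near the first fork. Neither of your suggested remedies applies: $R$'s moves are not forced and do not reconverge to $Q$, and there is no independent route to $Q$ at row $c+3$ (its only parents anywhere in the tree are $P_0$, which occurs only at row $c+1$, and $R$). A further warning sign is your own remark about the bound: your fork consumes only about $c^2+3c$ ones, so if the argument closed there, it would prove the lemma with a quadratic bound, not the cubic one in the statement.

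The paper's proof is, in essence, the machinery needed to repair exactly this stuck case. It descends through $c$ rounds of forced chains (the unrefutable combine branch above is the start of such a chain), accumulating copies of $S_3$ one per round, down to a node at depth about $c^2+1$ with the special property that \emph{every} one of its children has a common child (the ``green'' node in the paper's figures). At such a node the mover-wins ambiguity is harmless: whichever child carries the label, the label descends to the common grandchild, since the intermediate row belongs to the other player; comparing this with the turn-shifted copy of the green node, reached through an only-child path from a sibling, produces the contradiction. This common-grandchild configuration is the idea missing from your sketch; it requires enough copies of $S_3$ for the relevant split to exist, hence the descent to depth $\sim c^2$, and that depth --- roughly $c^2$ rows each consuming on the order of $c$ ones --- is the real reason the hypothesis on $n$ is cubic in $c$.
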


\begin{proof}
We consider an equivalent statement: In a $(d, 1)$-nacci game, where $d=c-1$, for any $c\geq 2, n\geq (c)^3+(c)$, when $d$ is odd, Player $2$ always has a winning strategy; when $d$ is even, Player $1$ always has a winning strategy. We first consider the case when $d$ is odd, and we suppose for contradiction that player $1$ has a winning strategy. From row $1$ to row $c$, each node only has one child, so player $1$ has a winning strategy for all these $c$ rows. Row $c$ is Player $2$'s turn but player $1$ has a winning strategy for $\{1^{(n-c^2+c)} \wedge c^{(c-1)}\}$, so all of its children in row $c+1$ have a winning strategy for Player $1$. The node $\{1^{(n-c^2-c+1)} \wedge c \wedge cc-1\}$ in row $c+2$ is the only child of $\{1^{(n-c^2+1)} \wedge cc-1\}$ in row $c+1$, so it must also have a winning strategy for player $1$. From row $c+2$ to row $2c$, each node only has one child, so Player $1$ has a winning strategy for all these rows. We can repeat the steps used from row $c+1$ to row $2c$ to show that Player $1$ has a winning strategy until $\{1^{(n-c^3+2c-1)} \wedge c^{(c-1)} \wedge (cc-1)^{(c-1)}\}$ in row $c^2$. Row $c^2$ is Player $2$'s turn, so player $1$ has a winning strategy for all of its children in row $c^2+1$. Since $\{1^{(n-c^3+2c-1)} \wedge ccc-2c+1\}$ in row $c^2+1$ has only one child, $\{1^{(n-c^3+2c-1)} \wedge c \wedge ccc-2c+1\}$ in row $c^2+2$ must also have a winning strategy for Player $1$. Then Player $2$ must have a winning strategy for the equivalent node in row $C^2+3$. However, since player $1$ has a winning strategy for $\{1^{(n-c^3-c)} \wedge (cc-1)^{(c)}\}$ in row $c^2+1$, at least one of its children must also have a winning strategy for player $1$. Both children are parents to the node $\{1^{(n-c^3+c-1)} \wedge c \wedge ccc-2c+1\}$ in row $c^2+3$, and since row $c^2+2$ is Player $2$'s turn, $\{1^{(n-c^3+c-1)} \wedge c \wedge ccc-2c+1\}$ in row $c^2+3$ must also have a winning strategy for Player $1$. This is a contradiction, so Player $2$ must have a winning strategy.

A similar proof also applies to the case when $d$ is even, which is shown in Appendix \ref{appendix 1}.

\end{proof}

% Fix formatting so the tree fits well on the page
\begin{figure}
    \centering
\resizebox{18cm}{!}{
\begin{tikzcd}[ampersand replacement=\&]
	player \& turn \\
	1 \& 1 \&\& \textcolor{rgb,255:red,214;green,92;blue,92}{1^n} \\
	2 \& 2 \&\& \textcolor{rgb,255:red,214;green,92;blue,92}{1^{(n-c)} \wedge c} \&\& {} \\
	2 \& c \&\& \textcolor{rgb,255:red,214;green,92;blue,92}{\ 1^{(n-c^2+c)} \wedge c^{(c-1)}} \&\&\& {} \\
	1 \& {c+1} \& \textcolor{rgb,255:red,214;green,92;blue,92}{1^{(n-c^2)} \wedge c^{(c)}} \&\& \textcolor{rgb,255:red,214;green,92;blue,92}{1^{(n-c^2+1)} \wedge cc-1} \&\&\\
	2 \& {c+2} \&\& \textcolor{rgb,255:red,214;green,92;blue,92}{1^{(n-c^2-c+1)} \wedge c \wedge cc-1} \\
	2 \& 2c \&\& \textcolor{rgb,255:red,214;green,92;blue,92}{1^{(n-2c^2+c+1)} \wedge c^{(c-1)} \wedge cc-1} \\
	1 \& {2c+1} \& \textcolor{rgb,255:red,214;green,92;blue,92}{1^{(n-2c^2+1)} \wedge c^{(c)} \wedge cc-1} \& \textcolor{rgb,255:red,214;green,92;blue,92}{1^{(n-2c^2+2)} \wedge (cc-1)^{(2)}} \&\&\\
	2 \& {2c+2} \&\& \textcolor{rgb,255:red,214;green,92;blue,92}{1^{(n-2c^2-c+2)} \wedge c \wedge (cc-1)^{(2)}} \\
	2 \& {c^2} \&\& \textcolor{rgb,255:red,214;green,92;blue,92}{1^{(n-c^3+2c-1)} \wedge c^{(c-1)} \wedge (cc-1)^{(c-1)}} \\
	1 \& {c^2+1} \& \textcolor{rgb,255:red,214;green,92;blue,92}{1^{(n-c^3+c-1)} \wedge c^{(c)} \wedge (cc-1)^{(c-1)}} \& \textcolor{rgb,255:red,214;green,92;blue,92}{1^{(n-c^3-c)} \wedge (cc-1)^{(c)}} \& \textcolor{rgb,255:red,214;green,92;blue,92}{1^{(n-c^3+2c-1)} \wedge ccc-2c+1} \\
	2 \& {c^2+2} \& \textcolor{rgb,255:red,214;green,92;blue,92}{1^{(n-c^3)} \wedge c \wedge (cc-1)^{(c)}} \& \textcolor{rgb,255:red,214;green,92;blue,92}{1^{(n-c^3+2c-1)} \wedge ccc-2c+1} \& \textcolor{rgb,255:red,214;green,92;blue,92}{1^{(n-c^3+c-1)} \wedge c \wedge ccc-2c+1} \\
	1 \& {c^2+3} \&\& \textcolor{rgb,255:red,92;green,214;blue,92}{1^{(n-c^3+c-1)} \wedge c \wedge ccc-2c+1} \& {}
	\arrow[no head, from=2-4, to=3-4]
	\arrow[dashed, no head, from=3-4, to=4-4]
	\arrow[no head, from=4-4, to=5-3]
	\arrow[no head, from=4-4, to=5-5]
	\arrow[no head, from=5-3, to=6-4]
	\arrow[no head, from=5-5, to=6-4]
	\arrow[dashed, no head, from=6-4, to=7-4]
	\arrow[no head, from=7-4, to=8-3]
	\arrow[no head, from=7-4, to=8-4]
	\arrow[no head, from=10-4, to=11-3]
	\arrow[no head, from=8-4, to=9-4]
	\arrow[no head, from=8-3, to=9-4]
	\arrow[dashed, no head, from=9-4, to=10-4]
	\arrow[no head, from=10-4, to=11-4]
	\arrow[no head, from=10-4, to=11-5]
	\arrow[no head, from=11-3, to=12-3]
	\arrow[no head, from=11-4, to=12-3]
	\arrow[no head, from=11-4, to=12-4]
	\arrow[no head, from=11-5, to=12-5]
	\arrow[no head, from=12-3, to=13-4]
	\arrow[no head, from=12-4, to=13-4]
	\arrow[dashed, no head, from=3-1, to=4-1]
	\arrow[dashed, no head, from=3-2, to=4-2]
	\arrow[dashed, no head, from=6-2, to=7-2]
	\arrow[dashed, no head, from=6-1, to=7-1]
	\arrow[dashed, no head, from=9-2, to=10-2]
	\arrow[dashed, no head, from=9-1, to=10-1]
\end{tikzcd}
}
    \caption{$(d, 1)$-nacci tree depicting the proof of Theorem \ref{(c,1)}, where $d=c-1$ is odd. Red indicates a winning strategy for Player $1$, and blue indicates a winning strategy for Player $2$. Green indicates a winning strategy for both players, which is a contradiction.}
    \label{fig:(c,1)-Tree}
\end{figure}

\begin{lem}\label{(c,2)}
In a $(c,2)$-nacci game, for any $c\geq 1, n\geq (c+1)^3+(c+1)$, when $c$ is odd, player $2$ always has a winning strategy; when $c$ is even, player $1$ always has a winning strategy.
\end{lem}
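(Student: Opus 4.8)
The plan is to mirror the two special cases already in hand: Lemma \ref{2pTrib} is exactly the $(1,2)$-nacci instance of this statement (note $(c+1)^3+(c+1)=10$ at $c=1$, matching $n>9$), and Lemma \ref{(c,1)} treats the $k=1$ analogue. Both are proved by backward induction on the game tree combined with the stealing strategy: one assumes the ``wrong'' player (Player $1$ when $c$ is odd, Player $2$ when $c$ is even) has a winning strategy, propagates this assumption downward, and locates a single configuration that is forced to be a winning position for both players at once --- the green node of Figures \ref{fig:TribTree} and \ref{fig:(c,1)-Tree}. I will run the same scheme here, with $d=c-1$ as in the other lemmas, proving one parity in the main text and deferring the other to an appendix.

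First I would record the initial terms of the $(c,2)$-nacci sequence, $S_1=1$, $S_2=c+1$, $S_3=(c+1)^2$, and describe the forced top of the tree. Starting from $\{1^n\}$ the only move is $\{(c+1)S_1 \to S_2\}$, and as long as fewer than $c$ copies of $S_2$ are present neither move $2$ nor move $(3b)$ can fire, so the first $c$ turns are forced and each node has a unique child. The branching begins once $c$ copies of $S_2$ are available: one may either apply $\{(c+1)S_1 \to S_2\}$ once more and then collapse via $(3b)$, $\{(c+1)S_2\to S_3\}$, or apply move $2$, $\{(c+1)S_1 \wedge cS_2 \to S_3\}$, directly. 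These two routes reach the same configuration but differ in length by one move.

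The heart of the argument is to push this forcing far enough to produce the decisive diamond. As in the Tribonacci tree, the parity flip is supplied by move $(3c)$, $\{(c+1)S_3 \to S_4 \wedge S_1\}$, which returns an $S_1$ to the list; this freed $1$ is exactly what lets one route to a given multiset be one move longer than another. I would build up enough copies of $S_3$ that a configuration containing $S_4$ can be reached in two ways: directly, by the single combining move $\{cS_1 \wedge cS_2 \wedge cS_3 \to S_4\}$ (move $1$), and the long way, by first forming one more $S_3$ and then applying $(3c)$, whose freed $S_1$ restores the count of $1$'s so that the two routes terminate at the \emph{same} multiset while differing by one move. The two incoming edges then meet at a common node which the downward propagation forces to be winning for Player $1$ along one branch and for Player $2$ along the other, the desired contradiction; the parity of $c$ together with the number of forced moves needed to assemble $S_2$ and $S_3$ fixes which player this node favors and recovers the stated dichotomy.

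The main obstacle is bookkeeping rather than conceptual. For $k=2$ the parity-flipping move $(3c)$ acts on $S_3$ rather than on $S_2$ (as it effectively did for $k=1$), so the tree must be grown one level higher before the converging paths appear, and I must verify that the two paths land on the \emph{identical} multiset of $k$-nacci numbers while differing by exactly one move --- delicate because the direct route consumes $c$ copies of $S_3$ while the $(3c)$ route consumes $c+1$. I would also confirm that the bound $n \geq (c+1)^3+(c+1)$ leaves enough $1$'s untouched throughout: assembling $c+1$ copies of $S_3$ costs $(c+1)^3$ ones (each $S_3$ needs $(c+1)^2$), and the extra $c+1$ is precisely the buffer needed so that none of the repeated $\{(c+1)S_1 \to S_2\}$ steps, nor the final moves building the diamond, is ever blocked for lack of material before the contradiction node is reached.
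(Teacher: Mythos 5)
Your proposal is correct and takes essentially the same approach as the paper: the paper's proof of this lemma is a one-sentence reference to the game-tree argument of Lemma \ref{(c,1)} together with Figure \ref{fig:(c,2)-Tree}, whose tree contains exactly the diamond you describe --- the multiset with an $S_4$ reached either directly by move $1$ or by building one extra $S_3$ and applying move $(3c)$, whose freed $S_1$ makes the two routes end at the same multiset one move apart, forcing the green contradiction node. Your bookkeeping (the forced first $c$ turns, the $S_4$-level parity flip, and the bound $(c+1)^3+(c+1)$ as the cost of one $S_2$ plus $c+1$ copies of $S_3$) is accurate and in fact more explicit than what the paper records.
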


\begin{proof}
The proof of Lemma \ref{(c,2)} follows the same format of the proof of Lemma \ref{(c,1)}, but the nodes are slightly different because of the different $k$-value. 
\end{proof}

% Fix formatting so the tree fits well on the page
\begin{figure}
    \centering
\resizebox{15cm}{!}{
\begin{tikzcd}[ampersand replacement=\&]
	player \& turn \\
	1 \& 1 \&\& \textcolor{rgb,255:red,214;green,92;blue,92}{1^n} \\
	2 \& 2 \&\& \textcolor{rgb,255:red,214;green,92;blue,92}{1^{(n-c)} \wedge c} \\
	2 \& c \&\& \textcolor{rgb,255:red,214;green,92;blue,92}{1^{(n-c^2+c)} \wedge c^{(c-1)}} \\
	1 \& {c+1} \& \textcolor{rgb,255:red,214;green,92;blue,92}{1^{(n-c^2)} \wedge c^{(c)}} \&\& \textcolor{rgb,255:red,214;green,92;blue,92}{1^{(n-c^2)} \wedge cc} \\
	2 \& {c+2} \&\& \textcolor{rgb,255:red,214;green,92;blue,92}{1^{(n-c^2-c)} \wedge c \wedge cc} \\
	2 \& 2c \&\& \textcolor{rgb,255:red,214;green,92;blue,92}{1^{(n-2c^2+c)} \wedge c^{(c-1)} \wedge cc} \\
	1 \& {2c+1} \& \textcolor{rgb,255:red,214;green,92;blue,92}{1^{(n-2c^2)} \wedge c^{(c)} \wedge cc} \& \textcolor{rgb,255:red,214;green,92;blue,92}{1^{(n-2c^2)} \wedge cc^{(2)}} \\
	2 \& {2c+2} \&\& \textcolor{rgb,255:red,214;green,92;blue,92}{1^{(n-2c^2-c)} \wedge c \wedge cc^{(2)}} \\
	2 \& {c^2} \&\& \textcolor{rgb,255:red,214;green,92;blue,92}{1^{(n-c^3+c)} \wedge c^{(c-1)} \wedge cc^{(c-1)}} \\
	1 \& {c^2+1} \& \textcolor{rgb,255:red,214;green,92;blue,92}{1^{(n-c^3)} \wedge c^{(c)} \wedge cc^{(c-1)}} \& \textcolor{rgb,255:red,214;green,92;blue,92}{1^{(n-c^3)} \wedge cc^{(c)}} \& \textcolor{rgb,255:red,214;green,92;blue,92}{1^{(n-c^3+1)} \wedge ccc-1} \\
	2 \& {c^2+2} \& \textcolor{rgb,255:red,214;green,92;blue,92}{1^{(n-c^3-c)} \wedge c \wedge cc^{(c)}} \& \textcolor{rgb,255:red,214;green,92;blue,92}{1^{(n-c^3+1)} \wedge ccc-1} \& \textcolor{rgb,255:red,214;green,92;blue,92}{1^{(n-c^3-c+1)} \wedge c \wedge ccc-1} \\
	1 \& {c^2+3} \&\& \textcolor{rgb,255:red,92;green,214;blue,92}{1^{(n-c^3-c+1)} \wedge c \wedge ccc-1} \\
	\&\&\&\& {}
	\arrow[no head, from=2-4, to=3-4]
	\arrow[dashed, no head, from=3-4, to=4-4]
	\arrow[no head, from=4-4, to=5-3]
	\arrow[no head, from=4-4, to=5-5]
	\arrow[no head, from=5-3, to=6-4]
	\arrow[no head, from=5-5, to=6-4]
	\arrow[dashed, no head, from=6-4, to=7-4]
	\arrow[no head, from=7-4, to=8-4]
	\arrow[no head, from=7-4, to=8-3]
	\arrow[no head, from=8-4, to=9-4]
	\arrow[no head, from=8-3, to=9-4]
	\arrow[dashed, no head, from=9-4, to=10-4]
	\arrow[no head, from=10-4, to=11-4]
	\arrow[no head, from=10-4, to=11-3]
	\arrow[no head, from=10-4, to=11-5]
	\arrow[no head, from=11-3, to=12-3]
	\arrow[no head, from=11-4, to=12-3]
	\arrow[no head, from=11-5, to=12-5]
	\arrow[no head, from=12-3, to=13-4]
	\arrow[no head, from=11-4, to=12-4]
	\arrow[no head, from=12-4, to=13-4]
	\arrow[dashed, no head, from=3-2, to=4-2]
	\arrow[dashed, no head, from=3-1, to=4-1]
	\arrow[dashed, no head, from=6-2, to=7-2]
	\arrow[dashed, no head, from=6-1, to=7-1]
	\arrow[dashed, no head, from=9-2, to=10-2]
	\arrow[dashed, no head, from=9-1, to=10-1]
\end{tikzcd}
}
    \caption{$(d, 2)$-nacci tree depicting the proof of Theorem \ref{(c,2)}, where $d=c-1$ is odd. Red indicates a winning strategy for Player $1$, and blue indicates a winning strategy for Player $2$. Green indicates a winning strategy for both players, which is a contradiction.}
    \label{fig:(c,2)-Tree}
\end{figure}

\begin{lem}\label{(c,k)}
In a $(c,k)$-nacci game, for any $c\geq 1$, $k \geq 3, n\geq (c+1)^3+(c+1)$, when $c$ is odd, Player $2$ always has a winning strategy; when $c$ is even, Player $1$ always has a winning strategy.
\end{lem}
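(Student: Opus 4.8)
The plan is to run the same argument as in Lemmas \ref{(c,1)} and \ref{(c,2)}, again replacing the constant by $d=c-1$ so that the small Zeckendorf numbers acquire clean labels, and then to show that for $k\ge 3$ the game tree is \emph{combinatorially identical} to those in Figures \ref{fig:(c,1)-Tree} and \ref{fig:(c,2)-Tree} all the way down to the contradiction. The first thing I would record is the number computation, which is also the conceptual reason the value of $k$ is irrelevant to the winner. In the $(d,k)$-game one has $S_j=c^{j-1}$ for every $1\le j\le k+1$: indeed $S_1=1$, and for $i\le k$ the initial-condition recurrence gives $S_{i+1}=d\sum_{j=1}^{i}S_j+1=(c-1)\frac{c^{i}-1}{c-1}+1=c^{i}$ by induction. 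Since $k\ge 3$ this yields $S_2=c$, $S_3=c^2$, and, crucially, $S_4=c^3$ with no correction term; these are exactly the labels of the $k=2$ tree except that the $k$-dependent shifts ($c^3-1$ for $k=2$ and $c^3-2c+1$ for $k=1$) have disappeared.

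With the labels fixed I would reproduce the node-by-node propagation verbatim. Treating the case $d$ odd (the case $d$ even being symmetric and deferred to the appendix, exactly as for the earlier lemmas), suppose for contradiction that Player $1$ has a winning strategy from $\{1^n\}$. The tree decomposes into repeated blocks of forced single-child rows punctuated by a branching row roughly every $c$ turns, precisely the repeating unit of Figures \ref{fig:(c,1)-Tree} and \ref{fig:(c,2)-Tree}: along the forced rows Player $1$'s winning strategy is inherited by the unique child; at each branching row that is Player $2$'s turn, the assumption that the parent is a Player-$1$ win forces \emph{every} child to be a Player-$1$ win; and wherever a node has a unique child the child inherits the parent's status. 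Iterating these blocks one reaches, around row $c^2$, positions assembled only from $S_2,S_3,S_4$, and concludes that the node $\{1^{(n-c^3+c-1)}\wedge c\wedge c^3\}$ in row $c^2+3$ must be a winning position for both players simultaneously, the desired contradiction. Hence Player $2$ wins when $d$ is odd, and the even case follows symmetrically, giving the stated dichotomy.

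The step I expect to be the real work — and the only thing distinguishing this lemma from a relabeling of Lemma \ref{(c,1)} — is verifying that for $k\ge 3$ the drawn tree genuinely has the same branching and that no $k$-sensitive move ever intervenes before row $c^2+3$. Every position along these paths uses only the indices $1,2,3,4$, Move $1$ cannot fire until a number of index $k+2\ge 5$ appears, and the splitting Moves $3$(c)--(d) require $c+1$ copies of a number of index $\ge k+1\ge 4$, which never accumulate here; in particular the creation of $S_4$ out of $c+1$ copies of $S_3$ is the byproduct-free Move $3$(b), legal precisely because $3<k+1$, i.e.\ $k\ge3$. Thus the available moves at every node coincide with those of the $k=2$ tree, and only the exponent of $1$ at each node changes (in fact more simply, since no byproduct $1$'s are produced). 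The remaining obstacle is then purely arithmetic bookkeeping of those exponents, together with checking that $n\ge (c+1)^3+(c+1)$ leaves enough copies of $1$ at every node for all the indicated moves to be legal; once this is in place the winning-strategy propagation is word-for-word that of Lemma \ref{(c,1)} and the contradiction follows.
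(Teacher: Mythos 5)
Your proposal is correct and takes essentially the same route as the paper's own proof (Appendix \ref{appendix 2}): substitute $d=c-1$, observe that $S_j=c^{j-1}$ for $j\le k+1$ so that for $k\ge 3$ the tree carries the clean labels $1,c,c^2,c^3$ with $S_4$ created byproduct-free by move 3(b), and propagate winning strategies down the same branching/forced-row structure to a node in row $c^2+3$ that is forced to be a win for both players. Two small corrections: that contradiction node is $\{1^{(n-c^3-c)}\wedge c\wedge c^3\}$ (you transcribed the $(c,1)$-tree exponent $n-c^3+c-1$, which is inconsistent with your own observation that no byproduct $1$'s arise when $k\ge 3$), and the even-$d$ case is not literally symmetric --- since $c$ is then odd, every other branching row has the assumed winner as the player to move, so one only gets that \emph{some} child is a win and must argue, as the paper's appendix does, that both branches feed into a common descendant two rows later.
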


\begin{proof}
The proof of Lemma \ref{(c,k)} follows the same format of the proof of Lemma \ref{(c,1)}, but the nodes are slightly different because of the different $k$-value. We provide a detailed proof in Appendix \ref{appendix 3}.
\end{proof}

% Fix formatting so the tree fits well on the page
\begin{figure}
    \centering
\resizebox{15cm}{!}{
\begin{tikzcd}[ampersand replacement=\&]
	player \& turn \\
	1 \& 1 \&\& \textcolor{rgb,255:red,255;green,92;blue,92}{1^n} \\
	2 \& 2 \&\& \textcolor{rgb,255:red,255;green,92;blue,92}{1^{(n-c)}} \\
	2 \& c \&\& \textcolor{rgb,255:red,255;green,92;blue,92}{1^{(n-c^2+c)} \wedge c^{(c-1)}} \\
	1 \& {c+1} \& \textcolor{rgb,255:red,255;green,92;blue,92}{1^{(n-c^2)} \wedge c^{(c)}} \&\& \textcolor{rgb,255:red,255;green,92;blue,92}{1^{(n-c^2)} \wedge cc} \\
	2 \& {c+2} \&\& \textcolor{rgb,255:red,255;green,92;blue,92}{1^{(n-c^2-c)} \wedge c \wedge cc} \\
	2 \& 2c \&\& \textcolor{rgb,255:red,255;green,92;blue,92}{1^{(n-2c^2+c)} \wedge c^{(c-1)} \wedge cc} \\
	1 \& {2c+1} \& \textcolor{rgb,255:red,255;green,92;blue,92}{1^{(n-2c^2)} \wedge c^{(c)} \wedge cc} \& \textcolor{rgb,255:red,255;green,92;blue,92}{1^{(n-2c^2)} \wedge cc^{(2)}} \\
	2 \& {2c+2} \&\& \textcolor{rgb,255:red,255;green,92;blue,92}{1^{(n-2c^2-c)} \wedge c \wedge cc^{(2)}} \\
	2 \& {c^2} \&\& \textcolor{rgb,255:red,255;green,92;blue,92}{1^{(n-c^3+c)} \wedge c^{(c-1)} \wedge cc^{(c-1)}} \\
	1 \& {c^2+1} \& \textcolor{rgb,255:red,255;green,92;blue,92}{1^{(n-c^3)} \wedge c^{(c)} \wedge cc^{(c-1)}} \& \textcolor{rgb,255:red,255;green,92;blue,92}{1^{(n-c^3)} \wedge cc^{(c)}} \& \textcolor{rgb,255:red,255;green,92;blue,92}{1^{(n-c^3)} \wedge ccc} \\
	2 \& {c^2+2} \& \textcolor{rgb,255:red,255;green,92;blue,92}{1^{(n-c^3-c)} \wedge c \wedge cc^{(c)}} \& \textcolor{rgb,255:red,255;green,92;blue,92}{1^{(n-c^3)} \wedge ccc} \& \textcolor{rgb,255:red,255;green,92;blue,92}{1^{(n-c^3-c)} \wedge c \wedge ccc} \\
	1 \& {c^2+3} \&\& \textcolor{rgb,255:red,92;green,214;blue,92}{1^{(n-c^3-c)} \wedge c \wedge ccc}
	\arrow[no head, from=2-4, to=3-4]
	\arrow[dashed, no head, from=3-4, to=4-4]
	\arrow[no head, from=4-4, to=5-5]
	\arrow[no head, from=4-4, to=5-3]
	\arrow[no head, from=5-5, to=6-4]
	\arrow[no head, from=5-3, to=6-4]
	\arrow[dashed, no head, from=6-4, to=7-4]
	\arrow[no head, from=7-4, to=8-4]
	\arrow[no head, from=7-4, to=8-3]
	\arrow[no head, from=8-4, to=9-4]
	\arrow[no head, from=8-3, to=9-4]
	\arrow[dashed, no head, from=9-4, to=10-4]
	\arrow[no head, from=10-4, to=11-3]
	\arrow[no head, from=10-4, to=11-5]
	\arrow[no head, from=10-4, to=11-4]
	\arrow[no head, from=11-3, to=12-3]
	\arrow[no head, from=11-4, to=12-3]
	\arrow[no head, from=12-4, to=11-4]
	\arrow[no head, from=11-5, to=12-5]
	\arrow[no head, from=12-4, to=13-4]
	\arrow[dashed, no head, from=3-2, to=4-2]
	\arrow[dashed, no head, from=3-1, to=4-1]
	\arrow[dashed, no head, from=6-2, to=7-2]
	\arrow[dashed, no head, from=6-1, to=7-1]
	\arrow[dashed, no head, from=9-2, to=10-2]
	\arrow[dashed, no head, from=9-1, to=10-1]
	\arrow[no head, from=12-3, to=13-4]
\end{tikzcd}
}
    \caption{$(d, k)$-nacci tree depicting the proof of Theorem \ref{(c,k)}, where $d=c-1$ is odd and $k\geq 3$. Red indicates a winning strategy for Player $1$, and blue indicates a winning strategy for Player $2$. Green indicates a winning strategy for both players, which is a contradiction.}
    \label{fig:(c,k)-Tree}
\end{figure}

Theorem \ref{thm3} is proved by Lemmas \ref{(c,1)}, \ref{(c,2)}, and \ref{(c,k)}
\end{proof}

We now prove the following result on the earliest possible turn a player can lose their winning strategy for the $(c,k)$-nacci game.\\
\\
\textbf{Theorem \ref{thm4}.}
\textit{
For the $(c,k)$-nacci game, where $k > 1$, if the player with the winning strategy makes a mistake as early as turn $c + 1$, the opposing player can steal the winning strategy.
}

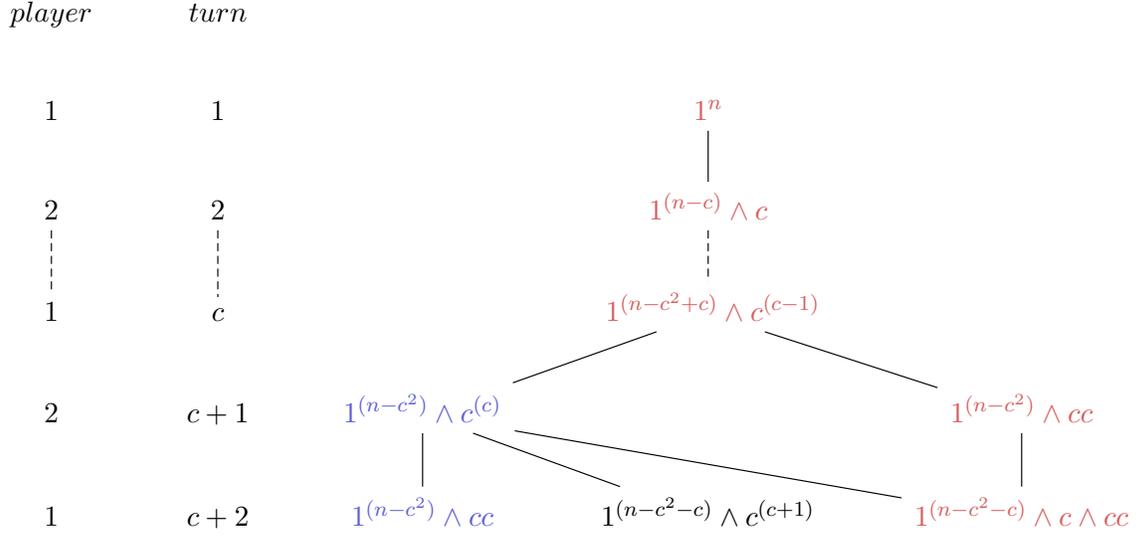
\begin{figure}
    \centering
\begin{tikzcd}[ampersand replacement=\&]
	player \& turn \\
	1 \& 1 \&\& \textcolor{rgb,255:red,214;green,92;blue,92}{1^n} \\
	2 \& 2 \&\& \textcolor{rgb,255:red,214;green,92;blue,92}{1^{(n-c)} \wedge c} \&\& {} \\
	1 \& c \&\& \textcolor{rgb,255:red,214;green,92;blue,92}{\ 1^{(n-c^2+c)} \wedge c^{(c-1)}} \&\& {} \& {} \\
	2 \& {c+1} \& \textcolor{rgb,255:red,92;green,92;blue,214}{1^{(n-c^2)} \wedge c^{(c)}} \&\& \textcolor{rgb,255:red,214;green,92;blue,92}{1^{(n-c^2)} \wedge cc} \\
	1 \& {c+2} \& \textcolor{rgb,255:red,92;green,92;blue,214}{1^{(n-c^2)} \wedge cc} \& {1^{(n-c^2-c)} \wedge c^{(c+1)}} \& \textcolor{rgb,255:red,214;green,92;blue,92}{1^{(n-c^2-c)} \wedge c \wedge cc}
	\arrow[no head, from=2-4, to=3-4]
	\arrow[dashed, no head, from=3-4, to=4-4]
	\arrow[no head, from=4-4, to=5-3]
	\arrow[no head, from=4-4, to=5-5]
	\arrow[no head, from=5-3, to=6-3]
	\arrow[no head, from=5-3, to=6-4]
	\arrow[no head, from=5-3, to=6-5]
	\arrow[no head, from=5-5, to=6-5]
	\arrow[dashed, no head, from=3-1, to=4-1]
	\arrow[dashed, no head, from=3-2, to=4-2]
\end{tikzcd}
    \caption{$(d, k)$-nacci tree depicting the proof of Theorem \ref{thm4}, where $d=c-1$. Red indicates a winning strategy for Player $1$, and blue indicates a winning strategy for Player $2$.}
    \label{fig:MistakeTree}
\end{figure}
% use this link to edit the mistake diagram and export Latex code
% https://q.uiver.app/?q=WzAsMjMsWzMsMSwiMV5uIixbMCw2MCw2MCwxXV0sWzMsMiwiMV57KG4tYyl9IFxcd2VkZ2UgYyIsWzAsNjAsNjAsMV1dLFszLDMsIlxcIDFeeyhuLWNeMitjKX0gXFx3ZWRnZSBjXnsoYy0xKX0iLFswLDYwLDYwLDFdXSxbMiw0LCIxXnsobi1jXjIpfSBcXHdlZGdlIGNeeyhjKX0iLFsyNDAsNjAsNjAsMV1dLFs0LDQsIjFeeyhuLWNeMil9IFxcd2VkZ2UgY2MiLFswLDYwLDYwLDFdXSxbMiw1LCIxXnsobi1jXjIpfSBcXHdlZGdlIGNjIixbMjQwLDYwLDYwLDFdXSxbMyw1LCIxXnsobi1jXjItYyl9IFxcd2VkZ2UgY157KGMrMSl9Il0sWzQsNSwiMV57KG4tY14yLWMpfSBcXHdlZGdlIGMgXFx3ZWRnZSBjYyIsWzAsNjAsNjAsMV1dLFs1LDJdLFs2LDNdLFs1LDNdLFswLDAsInBsYXllciJdLFswLDEsIjEiXSxbMCwyLCIyIl0sWzAsMywiMSJdLFswLDQsIjIiXSxbMCw1LCIxIl0sWzEsMCwidHVybiJdLFsxLDEsIjEiXSxbMSwyLCIyIl0sWzEsMywiYyJdLFsxLDQsImMrMSJdLFsxLDUsImMrMiJdLFswLDEsIiIsMCx7InN0eWxlIjp7ImhlYWQiOnsibmFtZSI6Im5vbmUifX19XSxbMSwyLCIiLDAseyJzdHlsZSI6eyJib2R5Ijp7Im5hbWUiOiJkYXNoZWQifSwiaGVhZCI6eyJuYW1lIjoibm9uZSJ9fX1dLFsyLDMsIiIsMCx7InN0eWxlIjp7ImhlYWQiOnsibmFtZSI6Im5vbmUifX19XSxbMiw0LCIiLDAseyJzdHlsZSI6eyJoZWFkIjp7Im5hbWUiOiJub25lIn19fV0sWzMsNSwiIiwwLHsic3R5bGUiOnsiaGVhZCI6eyJuYW1lIjoibm9uZSJ9fX1dLFszLDYsIiIsMCx7InN0eWxlIjp7ImhlYWQiOnsibmFtZSI6Im5vbmUifX19XSxbMyw3LCIiLDAseyJzdHlsZSI6eyJoZWFkIjp7Im5hbWUiOiJub25lIn19fV0sWzQsNywiIiwxLHsic3R5bGUiOnsiaGVhZCI6eyJuYW1lIjoibm9uZSJ9fX1dLFsxMywxNCwiIiwxLHsic3R5bGUiOnsiYm9keSI6eyJuYW1lIjoiZGFzaGVkIn0sImhlYWQiOnsibmFtZSI6Im5vbmUifX19XSxbMTksMjAsIiIsMSx7InN0eWxlIjp7ImJvZHkiOnsibmFtZSI6ImRhc2hlZCJ9LCJoZWFkIjp7Im5hbWUiOiJub25lIn19fV1d

\begin{proof}
We prove Theorem \ref{thm4} by splitting it into the following two lemmas.

\begin{lem}\label{CI1}
When $c$ is even Player $1$ has the winning strategy for the $(c, k)$-nacci game, but Player $2$ can steal the winning strategy if Player $1$ makes a mistake as early as turn $c + 1$.
\end{lem}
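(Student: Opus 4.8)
The plan is to combine Theorem~\ref{thm3}, which already supplies Player~$1$'s winning strategy, with a one-tempo version of the stealing strategy introduced earlier for Theorem~\ref{thm1}. First I would record that for even $c$ Theorem~\ref{thm3} gives Player~$1$ a winning strategy from $\{S_1^{n}\}$, so it remains only to locate an early move that throws it away. I would then trace the forced opening: while the list consists solely of $1$'s the only legal move is $\{(c+1)S_1 \to S_2\}$, and since $c$ copies of $S_2$ are needed before any other move becomes available (namely before $\{(c+1)S_1 \wedge c\,S_2 \to S_3\}$ can be played), turns $1$ through $c$ are forced and build up $S_2$'s one at a time. At the start of turn $c+1$, which is Player~$1$'s turn because $c$ is even, the position is $X := \{1^{(n-c^2-c)} \wedge S_2^{(c)}\}$, and here for the first time there are exactly two legal moves: stockpile one more $S_2$ via $\{(c+1)S_1 \to S_2\}$, reaching $B := \{1^{(n-(c+1)^2)} \wedge S_2^{(c+1)}\}$, or build the next term via $\{(c+1)S_1 \wedge c\,S_2 \to S_3\}$, reaching $A := \{1^{(n-(c+1)^2)} \wedge S_3\}$. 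These are the two children of $X$ in Figure~\ref{fig:MistakeTree}, and the latter move is legal precisely because $k>1$, so the index $i=2$ satisfies $i \le k$.

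The key structural observation is that $A$ is reachable from $X$ by two routes whose lengths differ by one. Player~$1$ can reach $A$ directly in a single move, after which Player~$2$ is on move; alternatively, after the stockpiling move $X \to B$ Player~$2$ can play $\{(c+1)S_2 \to S_3\}$ — available again only because $k>1$, since then $1<2<k+1$ — which collapses all $c+1$ copies of $S_2$ into one $S_3$ and produces exactly the multiset $A$, now with Player~$1$ on move. Granting that forming $S_3$ is Player~$1$'s winning continuation at $X$, that is, that $A$ is a position from which the player to move loses (the red node of Figure~\ref{fig:MistakeTree}), the steal is immediate: if Player~$1$ blunders with $X \to B$ at turn $c+1$, Player~$2$ replies $B \to A$, leaving Player~$1$ to move in a losing position, so Player~$2$ has inherited the winning strategy one tempo later. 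This is exactly why the same multiset $A$ is red (a Player~$1$ win) when Player~$2$ is to move and blue (a Player~$2$ win) when Player~$1$ is to move.

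The main obstacle is the claim granted above: that at $X$ the term-building move $X \to A$ is Player~$1$'s winning move while the stockpiling move $X \to B$ is a genuine mistake, equivalently that $A$ is a loss for the player on move and $B$ is a win for it. I would extract this from the winning line already exhibited for Theorem~\ref{thm3}: for even $c$ the Player~$1$ (red) path in Figures~\ref{fig:(c,1)-Tree}--\ref{fig:(c,k)-Tree} advances by forming the next $k$-nacci number at exactly this juncture, so $A$ sits on Player~$1$'s winning path and is therefore a loss for whoever is on move, whereas $B$ is refuted by the collapse $B \to A$ described above. I would also emphasize that the hypothesis $k>1$ is used twice and is essential: it is what makes both $\{(c+1)S_1 \wedge c\,S_2 \to S_3\}$ at $X$ and the collapse $\{(c+1)S_2 \to S_3\}$ from $B$ legal, so for $k=1$ this particular early steal is unavailable and the statement of Theorem~\ref{thm4} correctly excludes it. Finally, the companion case with $c$ odd (in which Player~$2$ holds the winning strategy) would be handled by the mirror-image argument with the two players interchanged.
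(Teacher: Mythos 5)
Your skeleton matches the paper's: the forced opening through turn $c$, the critical position $X$ at turn $c+1$ with its two children $A$ (build $S_3$) and $B$ (stockpile), the collapse $\{(c+1)S_2 \rightarrow S_3\}$ taking $B$ to the same multiset $A$ one tempo later, and the resulting steal. (The paper phrases all of this in the $(d,k)$-nacci game with $d=c-1$ even, but that is only a relabeling.) The genuine gap is exactly the step you ``grant'': that $A$ is a loss for the player to move. Your proposed justification --- reading this off ``the winning line already exhibited for Theorem~\ref{thm3}'' in Figures~\ref{fig:(c,1)-Tree}--\ref{fig:(c,k)-Tree} --- does not work, because Theorem~\ref{thm3} is proved non-constructively: those figures are contradiction trees, and their colored paths record which positions \emph{would} be wins for a given player under a hypothesis that the proof ultimately refutes (in the main-text figures the red player is precisely the one shown \emph{not} to have the winning strategy). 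No actual winning line, and in particular no information about which of $A$, $B$ is Player $1$'s correct move at $X$, can be extracted from them. Without that input your argument is circular: you need $A$ to be a loss for the player to move in order to say the collapse ``refutes'' $B$, and you need $B$ to be refuted in order to conclude that $A$ is the winning child.

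The paper closes this gap with a short self-contained contradiction that uses a third position you never mention: $W := \{1^{(n-(c+1)^2-(c+1))} \wedge S_2 \wedge S_3\}$, which is simultaneously a child of $B$ (via $\{(c+1)S_1 \wedge cS_2 \rightarrow S_3\}$) and the \emph{unique} child of $A$ (the only move at $A$ is $\{(c+1)S_1 \rightarrow S_2\}$). Suppose $B$, with Player $2$ to move, were a win for Player $1$. Then all three children of $B$ are Player $1$ wins; in particular both $W$ and the copy of $A$ with Player $1$ to move are Player $1$ wins. Since $W$ (at that same parity) is the unique child of the copy of $A$ with Player $2$ to move, that copy is then also a Player $1$ win. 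But the stealing symmetry applied to the two copies of $A$ at consecutive parities says that if $A$ with Player $1$ to move is a Player $1$ win, then $A$ with Player $2$ to move is a Player $2$ win --- a contradiction. Hence $B$ is a Player $2$ win, and only now does it follow, since Player $1$ has a winning strategy at $X$ and so must have a winning child there, that $A$ is that child and that the move $X \rightarrow B$ forfeits the game. You have all the raw material (the collapse and the parity shift), but as written you assume the key claim instead of proving it; adding the argument through $W$ would make your proof complete and essentially identical to the paper's.
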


\begin{proof}
Let $d=c-1$. We have previously shown that when $c$ is even, Player $1$ has the winning strategy for the $(c, k)$-nacci game. Equivalently, let $d=c-1$ be even, then Player $1$ has the winning strategy for the $(d, k)$-nacci game. In this game there is only one possible move for each of the first $d$ turns, so Player $1$ has the winning strategy until $\{1^{(n-c^2+c)} \wedge c^{(c-1)}\}$ in row $c$. This means that at least one of the nodes in row $c + 1$ must have a winning strategy for Player $1$. Suppose $\{1^{(n-c^2)} \wedge c^{(c)}\}$ in row $c + 1$ has a winning strategy for Player $1$, then since it is Player $2$'s turn, all three of its children in row $c + 2$ must have a winning strategy for Player $1$. Since $\{1^{(n-c^2+c)} \wedge c \wedge cc\}$ in row $c + 2$ has a winning strategy for Player $1$, its parent $\{1^{(n-c^2)} \wedge cc\}$ in row $c + 1$ must as well. However, Player $2$ can steal Player $1$'s winning strategy from the equivalent node in row $c + 2$, so $\{1^{(n-c^2)} \wedge cc\}$ in row $c + 1$ also has a winning strategy for Player $2$. From this contradiction, $\{1^{(n-c^2)} \wedge c^{(c)}\}$ in row $c + 1$ must have a winning strategy for Player $2$. Therefore, if Player $1$ makes a mistake as early as turn $c$, Player $2$ can steal the winning strategy. This analysis has been for the $(c - 1, k)$-nacci game, so in the $(c, k)$-nacci game, Player $2$ can steal the winning strategy as early as turn $c + 1$.
\end{proof}

\begin{lem}\label{CI2}
When $c$ is odd Player $2$ has the winning strategy for the $(c, k)$-nacci game, but Player $1$ can steal the winning strategy if player $2$ makes a mistake as early as turn $c + 1$.
\end{lem}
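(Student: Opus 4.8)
The plan is to prove Lemma \ref{CI2} as the player-interchanged mirror of Lemma \ref{CI1}. By Theorem \ref{thm3}, when $c$ is odd it is Player $2$ who holds the winning strategy, so I would run the argument of Lemma \ref{CI1} after swapping the two players throughout (equivalently, reading the tree of Figure \ref{fig:MistakeTree} with its colours exchanged and the relabeling $d=c-1$, now even). Working directly in the $(c,k)$-nacci game, the only legal move from a heap of ones carrying fewer than $c$ copies of $S_2$ is $\{(c+1)S_1\to S_2\}$, so the opening is a single forced path; the first position offering a genuine choice is $P=\{1^{(n-c(c+1))}\wedge S_2^{(c)}\}$, reached on turn $c+1$, which is Player $2$'s move because $c$ is odd. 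The forced opening leaves Player $2$ no decisions, so $P$ still carries Player $2$'s winning strategy.

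The core of the proof is the branch analysis at $P$, which has exactly two continuations: move 2 gives the single-$S_3$ node $N=\{1^{(n-(c+1)^2)}\wedge S_3\}$ and move 3a gives $B_1=\{1^{(n-(c+1)^2)}\wedge S_2^{(c+1)}\}$, both on turn $c+2$ (Player $1$'s move). I would show that move 3a is the fatal mistake, i.e.\ that $B_1$ is winning for Player $1$, by assuming the opposite and chasing it as in Lemma \ref{CI1}. If $B_1$ were winning for Player $2$, then, turn $c+2$ being Player $1$'s move, every child of $B_1$ would be winning for Player $2$; two of them are a fresh copy of $N$, produced by the clean merge $\{(c+1)S_2\to S_3\}$ (this is exactly where $k>1$ enters, making $S_2$ an interior index so that no stray $S_1$ is created), now sitting on turn $c+3$, and $C=\{1^{(n-(c+1)^2-(c+1))}\wedge S_2\wedge S_3\}$, produced by move 2. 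Since $C$ is the unique continuation of $N$ and $C$ is winning for Player $2$, the turn-$(c+2)$ copy of $N$ is winning for Player $2$; but the turn-$(c+3)$ copy of $N$ is winning for Player $2$ with Player $2$ to move, and stealing that strategy shows the turn-$(c+2)$ copy is winning for Player $1$ as well. Hence $N$ on turn $c+2$ is winning for both players, the green contradiction of Figure \ref{fig:MistakeTree}, and $B_1$ must be winning for Player $1$.

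It follows that the only safe move for Player $2$ on turn $c+1$ is move 2, to $N$, and that the alternative move 3a hands the game to Player $1$; this is precisely the statement that the player holding the winning strategy can forfeit it by a mistake as early as turn $c+1$, so Lemma \ref{CI2} holds, and with Lemma \ref{CI1} it finishes Theorem \ref{thm4}. I expect no genuinely new idea here: the argument is Lemma \ref{CI1} read under the interchange of the two players, and everything delicate is bookkeeping. The three points needing care are keeping the turn parities aligned so that the decisive branch falls on Player $2$'s move (where $c$ odd is what places the first choice on turn $c+1$), invoking $n\ge (c+1)^3+(c+1)$ to be sure every node listed above really has the children claimed, and — the one step that actually carries the argument — verifying that $N$ has the single continuation $C$, since it is this uniqueness that forces the parity clash at $N$ rather than merely permitting it.
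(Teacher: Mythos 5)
Your proof is correct and takes essentially the same route as the paper: the paper's own proof of Lemma \ref{CI2} is a two-sentence reduction to Lemma \ref{CI1} with the players' roles switched, and your argument is exactly that mirrored argument, written out in full directly in the $(c,k)$-nacci variables (so the branch point falls on turn $c+1$ without the paper's relabeling $d=c-1$). The contradiction you chase at the doubled single-$S_3$ node $N$ --- unique continuation plus strategy-stealing across the parity shift --- is precisely the mechanism of Lemma \ref{CI1} and Figure \ref{fig:MistakeTree}.
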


\begin{proof}
Once again we consider the $(c - 1, k)$-nacci game, but now $c$ is even and Player $2$ has the winning strategy. However, since turn $c$ is now Player $2$'s turn, the remainder of the proof is identical to the proof of Lemma \ref{CI1}, but with the positions switched. 
\end{proof}
By Lemmas \ref{CI1} and \ref{CI2}, Theorem \ref{thm4} is proved.
\end{proof}

%%%%%%%%%%%%%%%%%%%%%%%%%%%%%%%%%%%%%%%%%%%%%%%%%%%%%%%%
%%%%%%%%%%%%%%%%%%%%%%%%%%%%%%%%%%%%%%%%%%%%%%%%%%%%%%%%
%%%%%%%%%%%%%%%%%%%%%%%%%%%%%%%%%%%%%%%%%%%%%%%%%%%%%%%%
\section{Multiplayer and Multialliance Generalized Zeckendorf Games}

After working on the winning strategy of two-player $(c,k)$-nacci Zeckendorf Game, we now move to specific cases of the multiplayer and multialliance $(c,k)$-nacci Game, and our main results include Theorem \ref{DI1}, Theorem \ref{DI5}, and Theorem \ref{DI8}. These results are extensions of previous theorems on the Multiplayer Zeckendorf Game that is played on the Fibonacci numbers.\\
\\
\textbf{Theorem \ref{DI1}.}
\textit{
When $n \geq 3c^2 + 6c + 3$, for any $p \geq c + 2$, no player has a winning strategy in the Multiplayer Generalized Zeckendorf Game. 
}

\begin{proof}
Note: In all the following proofs of this section, Player $0$ = Player $p$ (under mod $p$).

To prove Theorem \ref{DI1}, we introduce the following property.

\begin{pro}\label{DI2}
Suppose Player $m$ has a winning strategy $(1 \leq m \leq p)$. For any $p \geq 3$, if Player $m$ is not the player who takes step $c + 1$ listed below, then any winning path of player $m$ does not contain the following $c+2$ consecutive steps. \\
% I had trouble getting the different steps aligned well
\begin{equation}
\begin{split}
\text {Step 1:} & \ (c+1)S_{1} \rightarrow S_{2}.\\
\text {Step 2:} & \ (c+1)S_{1} \rightarrow S_{2}.\\
\vdots \\
\text {Step c + 1:} & \ (c+1)S_{1} \rightarrow S_{2}.\\
\text {Step c + 2:} & \
\begin{cases}
(c+1)S_{2} \rightarrow S_{3} \wedge S_{1} & \text {if k = 1}\\
(c+1)S_{2} \rightarrow S_{3} & \text {if k > 1}.
\end{cases}
\end{split}
\end{equation}

\begin{proof}
Suppose Player $m$ has a winning strategy and there is a winning path that contains these $c+2$ consecutive steps. Then there exists a Player $a$ where $1 \leq a \leq p$, $a \neq m$, such that Player $a - c$ (mod $p$) can take step 1, Player $a - c +1$ (mod $p$) can take step 2, continuing in this way until Player $a$ takes step $c + 1$ and Player $a + 1$ (mod $p$) can take step $c + 2$.

Note that if $k = 1$, Player $a$ can instead do $\{c S_{1} \wedge c S_{2} \rightarrow S_{3}\}$, and if $k > 1$, Player $a$ can instead do $\{ (c + 1) S_{1} \wedge c S_{2} \rightarrow S_{3} \}$. Then Player $m - 1$ (mod $p$) has a winning strategy, which is a contradiction.
\end{proof}
\end{pro}

We now prove Theorem \ref{DI1} by splitting it into the following two lemmas.

\begin{lem}\label{DI3}
When $n \geq 2c^2 + 5c +3$, for any $p \geq c + 3$ no player has a winning strategy.
\end{lem}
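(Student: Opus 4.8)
The plan is to run the same stealing argument used for the multiplayer Tribonacci case (Lemma \ref{lem2.1.1}), but now driven by the general forbidden pattern of Property \ref{DI2}. Assume for contradiction that Player $m$ has a winning strategy, where $1 \leq m \leq p$ and $p \geq c+3$. Property \ref{DI2} tells us that, provided Player $m$ is not the one executing step $c+1$, no winning path of Player $m$ can contain the block of $c+2$ consecutive moves consisting of $c+1$ repetitions of $\{(c+1)S_1 \to S_2\}$ followed by the combining move on $S_2$. Since Player $m$ wins against every opponent line, it suffices to exhibit one choice of opponents' moves that realizes this exact block with the step-$(c+1)$ player different from $m$; that produces a winning path forbidden by Property \ref{DI2}, a contradiction. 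So the whole task reduces to positioning the block so that it avoids putting Player $m$ in the step-$(c+1)$ seat, and to checking the block is playable.

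I would split on the location of $m$, exactly paralleling the two cases of Lemma \ref{lem2.1.1}. In Case 1, $m \geq c+3$: let Players $1, 2, \dots, c+2$ execute steps $1$ through $c+2$ as the first $c+2$ moves of the game. Player $m$ has not yet moved, the play is vacuously consistent with $m$'s strategy, and the step-$(c+1)$ player is Player $c+1 \neq m$; this contradicts Property \ref{DI2}. In Case 2, $m \leq c+2$: let Players $1,\dots,m-1$ each play $\{(c+1)S_1 \to S_2\}$, let Player $m$ make its (forced first) move, and then let Players $m+1, m+2, \dots, m+c+2$ execute steps $1$ through $c+2$. Here I would use $p \geq c+3$ twice: first, the block $m+1,\dots,m+c+2$ consists of $c+2$ consecutive players and, since $c+2 < p$, it does not wrap back onto $m$; second, the step-$(c+1)$ player is $m+c+1$, and $c+1 \not\equiv 0 \pmod p$ because $0 < c+1 < p$, so $m+c+1 \neq m$. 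Either way the forbidden pattern appears with someone other than $m$ taking step $c+1$, contradicting Property \ref{DI2}. Note this is exactly where $p = c+3$ is needed and why $p = c+2$ must be deferred to the companion lemma.

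The main obstacle is the playability bookkeeping, which is also precisely what pins down the threshold $n \geq 2c^2 + 5c + 3$. Each move of the form $\{(c+1)S_1 \to S_2\}$ consumes $c+1$ ones, and no legal move consumes more than $c+1$ ones, so I can bound the total number of $1$'s used up to and including the block. In the worst case ($m = c+2$ in Case 2) the moves before the block use at most $(m-1)(c+1) + (c+1) = (c+1)^2 + (c+1)$ ones (the first $c+1$ players plus Player $m$'s single move), while steps $1$ through $c+1$ of the block consume a further $(c+1)^2$ ones; meanwhile those same $c+1$ steps generate $c+1$ copies of $S_2$, exactly what step $c+2$ requires, so the $S_2$-count never obstructs playability regardless of what Player $m$ did on its turn. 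Summing gives $2(c+1)^2 + (c+1) = (c+1)(2c+3) = 2c^2 + 5c + 3$, so the hypothesis $n \geq 2c^2 + 5c + 3$ exactly guarantees enough ones remain for the block in every case. I would close by remarking that the cleanest presentation states the ones-consumption bound once and applies it uniformly to both cases, rather than tracking the exact list position-by-position.
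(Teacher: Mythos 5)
Your proposal is correct and follows essentially the same route as the paper's proof: the same two cases on the position of Player $m$ relative to $c+2$, the same stealing argument via Property \ref{DI2}, and the same worst-case count of $(2c+3)$ moves of $\{(c+1)S_1 \rightarrow S_2\}$ giving the threshold $(2c+3)(c+1) = 2c^2+5c+3$. Your write-up is in fact somewhat more careful than the paper's, since you explicitly verify that the step-$(c+1)$ player cannot coincide with Player $m$ modulo $p$ and that the $S_2$ supply for the final step is unaffected by Player $m$'s intervening move.
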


\begin{proof}
Suppose Player $m$ has a winning strategy $(1 \leq m \leq p)$. 
Consider the following two cases. \\

Case 1: If $m \geq c + 3$, then Players 1, 2, $\ldots$, $c + 1$, $c + 2$ can do the sequence of $c+2$ moves described in Property \ref{DI2}. This contradicts Property \ref{DI2}, so player $m$ does not have a winning strategy for any
$m \geq c + 3$. \\

Case 2: If $m \leq c + 2$, then after Player $m$'s first move, Players $m + 1$, $m + 2$, $\ldots$, $m + c + 2$ (mod $p$) can do the sequence of $c+2$ moves described in Property \ref{DI2}. This contradicts Property \ref{DI2}, so player $m$ does not have a winning strategy for any
$m \leq c + 2$. \\

Case 2 requires a maximum of $2c + 3$ steps of the move  $\{(c+1)S_{1} \rightarrow S_{2}\}$, which requires that $n \geq 2c^2 + 5c + 3$.
By Case 1 and Case 2, Lemma \ref{DI3} is proved.
\end{proof}

\begin{lem}\label{DI4}
When $n \geq 3c^2 + 6c +3$, for $p = c + 2$ no player has a winning strategy.
\end{lem}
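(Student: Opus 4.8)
The plan is to prove Lemma \ref{DI4} for the boundary case $p = c+2$ by the same stealing-strategy philosophy used in Lemma \ref{DI3} and in Lemma \ref{lem2.1.2}, but with extra care because now the cycle of players exactly matches the $c+2$ steps of Property \ref{DI2}, so we cannot simply invoke the property on a fresh block of players. First I would suppose for contradiction that some Player $m$ (with $1 \leq m \leq c+2$) has a winning strategy. After Player $m$'s first move, I would have the remaining players perform the sequence of moves $\{(c+1)S_1 \to S_2\}$ as in steps $1$ through $c+1$ of Property \ref{DI2}; since there are only $c+2$ players total, this block of $c+1$ reductions is carried out by Players $m+1, m+2, \ldots, m+c+1 \pmod p$, bringing the turn back around so that the player scheduled to take the critical step $c+2$ is Player $m$ itself. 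As in Property \ref{DI2}, invoking the property directly is blocked precisely because Player $m$ is the player taking the step, so I would instead examine what Player $m$ actually does on this turn.

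The key dichotomy mirrors Lemma \ref{lem2.1.2}. If Player $m$ executes the move $(c+1)S_2 \to S_3$ (or $(c+1)S_2 \to S_3 \wedge S_1$ when $k=1$), then the full $c+2$ step block has been completed with Player $m$ taking step $c+2$, and I would argue that the alternative move available at step $c+1$ (namely $\{cS_1 \wedge cS_2 \to S_3\}$ if $k=1$ or $\{(c+1)S_1 \wedge cS_2 \to S_3\}$ if $k>1$) shifts the winning position to Player $m-1 \pmod p$, a contradiction. If instead Player $m$ does anything other than collapsing the $S_2$'s, I would continue the forced sequence: the other players perform one more round of $\{(c+1)S_1 \to S_2\}$ moves followed by the collapse move, and the same bookkeeping argument as in Lemma \ref{lem2.1.2} shows that an earlier player can substitute the combined move $\{(c+1)S_1 \wedge cS_2 \to S_3\}$ (consuming an $S_2$ generated by the preceding reductions) to advance the winning strategy to Player $m-1 \pmod p$, again a contradiction. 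The crucial counting point, exactly analogous to the ``at least one $2$ remains'' observation in Lemma \ref{lem2.1.2}, is that Player $m$'s intervening non-collapse move can destroy at most a bounded number of the freshly created $S_2$'s, so enough $S_2$'s survive for the substitution to be legal.

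The main obstacle I anticipate is the resource accounting: I must verify that the initial list of $n$ copies of $S_1$ contains enough $1$'s to support every $\{(c+1)S_1 \to S_2\}$ move demanded across both rounds, including Player $m$'s own first move and any $S_1$'s consumed or produced by Player $m$'s arbitrary intervening move. Each such reduction consumes $c+1$ copies of $S_1$, and in the worst case I need roughly $c+1$ reductions per round over up to three rounds plus slack, which is exactly what forces the hypothesis $n \geq 3c^2 + 6c + 3$; I would confirm this bound by tallying the maximum number of $\{(c+1)S_1 \to S_2\}$ applications the argument requires (about $3c+3$ of them, each costing $c+1$ ones). Once Lemma \ref{DI4} is established, combining it with Lemma \ref{DI3}, whose hypothesis $p \geq c+3$ is subsumed under the weaker bound $n \geq 2c^2+5c+3 \leq 3c^2+6c+3$, completes the proof of Theorem \ref{DI1}, since every case $p \geq c+2$ is then covered.
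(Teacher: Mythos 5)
Your proposal is correct and follows essentially the same two-phase stealing strategy as the paper's proof: a block of $c+1$ forced moves $\{(c+1)S_1 \rightarrow S_2\}$ returning the turn to Player $m$, a dichotomy on Player $m$'s move at step $c+2$ (the collapse move yields a contradiction via the deviation at step $c+1$; any other move is followed by a second block in which the player making the last reduction substitutes the combined move $\{(c+1)S_1 \wedge cS_2 \rightarrow S_3\}$, shifting the win to Player $m-1 \pmod p$), together with the same tally of at most $3c+3$ reductions giving the hypothesis $n \geq 3c^2+6c+3$. The only cosmetic difference is that in the first branch you re-derive the deviation argument of Property \ref{DI2} inline, believing the property cannot be cited because Player $m$ takes the collapse step; in fact the property applies directly there, since its caveat concerns the player taking step $c+1$ (here Player $m-1 \pmod p$), not step $c+2$.
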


\begin{proof}
Suppose Player $m$ has a winning strategy $(1 \leq m \leq c + 2)$. On their first moves, Players $m + 1$, $m + 2$, $\ldots$, $m + c +1$ (mod $p$) can do the following sequence of moves. \\
% I had trouble getting the different steps aligned well
\begin{equation}
\begin{split}
\text {Step 1: Player m + 1:} & \ (c+1)S_{1} \rightarrow S_{2}. \\
\vdots \\
\text {Step c + 1: Player m + c + 1:} & \ (c+1)S_{1} \rightarrow S_{2}.\\
\text {Step c + 2: Player m:} & \text { Player m can do anything}.
\end{split}
\end{equation}

Note that if Player $m$ does the move $\{(c+1)S_{2} \rightarrow S_{3} \wedge S_{1}\}$ if $k=1$, or $\{(c+1)S_{2} \rightarrow S_{3}\}$ if $k > 1$, then steps 1 through $c+2$ violate Property \ref{DI2}, which is a contradiction. Player $m+1$ can then continue with the following sequence of moves. \\
% I had trouble getting the different steps aligned well
\begin{equation}
\begin{split}
\text {Step c + 3: Player m + 1:} & \ (c+1)S_{1} \rightarrow S_{2}. \\
\vdots \\
\text {Step 2c + 2: Player m + c:} & \ (c+1)S_{1} \rightarrow S_{2}. \\
\text {Step 2c + 3: Player m + c +1:} & \
\begin{cases}
(c+1)S_{2} \rightarrow S_{3} \wedge S_{1} & \text {if k = 1}\\
(c+1)S_{2} \rightarrow S_{3} & \text {if k > 1}.
\end{cases}
\end{split}
\end{equation}

Note that step $c + 2$ removes at most $c$ of $S_{1}$, but steps 1 through $c + 1$ generate $c + 1$ of $S_{1}$, so there will be at least one remaining after step $c + 2$. However, on step $2c + 2$ Player $m + c$ can instead do $\{c S_{1} \wedge c S_{2} \rightarrow S_{3} \}$ if $k = 1$, or $\{(c + 1) S_{1} \wedge c S_{2} \rightarrow c S_{3} \}$ if $k > 1$. By doing so Player $m - 1$ (mod $p$) now has a winning strategy which is a contradiction. 

The steps described above requires a maximum of $3c + 3$ steps of the move  $\{(c+1)S_{1} \rightarrow S_{2}\}$, which requires that $n \geq 3c^2 + 6c + 3$. This proves that for $p = c + 2$ no player has a winning strategy when $n \geq 3c^2 + 6c + 3$.
\end{proof}

Theorem \ref{DI1} is proved by Lemmas \ref{DI2} and \ref{DI3}.\\
\end{proof}

\textbf{Theorem \ref{DI5}.}
\textit{
For any $n \geq 2d^2 + 4d$ and $t \geq c+2$, if each team has exactly $d = t - c$ consecutive players, then no team has a winning strategy in the Team Generalized Zeckendorf Game.
}

\begin{proof}
For the following proofs, team 0 = team $t$ (under mod $t$). Note that after player $td$ the next player to move is player 1, and we regard player $td$ and player 1 as two
consecutive players. Therefore, without loss of generality, in all the following proofs, we assume
that team 1 has players $1, 2, 3, \ldots, d$; team 2 has players $d + 1, d + 2, \ldots, 2d$; team 3 has players
$2d + 1, 2d + 2, \ldots, 3d$ and so on. 

To prove Theorem \ref{DI5}, we introduce the following property.

\begin{pro} \label{property 2}
Suppose team $m$ has a winning strategy $(1 \leq m \leq t)$. For any $t \geq c + 2$ and $d = t - c$, if none of the middle $d$ players listed below belong to team $m$, then any winning path for team $m$ does not contain the following 3$d$ (for case 1) or $(c+2)d$ (for case 2) consecutive steps. \\
Case 1: Let $c = 1$.
\begin{equation}
\begin{split}
\text {First d players all do:} & \ (2)S_{1} \rightarrow S_{2}.\\
\text {Middle d players all do:} & \ (2)S_{1} \rightarrow S_{2}.\\
\text {Last d players all do:} & \
\begin{cases}
(2)S_{2} \rightarrow S_{3} \wedge S_{1} & \text {if k = 1}\\
(2)S_{2} \rightarrow S_{3} & \text {if k > 1}.
\end{cases}
\end{split}
\end{equation}
Case 2: Let $c > 1$.
\begin{equation}
\begin{split}
\text {First c players all do:} & \ (c+1)S_{1} \rightarrow S_{2}. \\
\text {Player c + 1:} & \ (c+1)S_{1} \rightarrow S_{2}.\\
\text {Player c + 2:} & \
\begin{cases}
(c+1)S_{2} \rightarrow S_{3} \wedge S_{1} & \text {if k = 1}\\
(c+1)S_{2} \rightarrow S_{3} & \text {if k > 1}.
\end{cases}
\end{split}
\end{equation}
Repeat this sequence of $c + 2$ steps $d$ times.
\end{pro}
\begin{proof}
Suppose team $m$ has a winning strategy and there is a winning path for team $m$ that contains these 3$d$ (for case 1) or $(c + 2)d$ (for case 2) consecutive steps. Then there exists a Player $q$ $(1 \leq q \leq p)$ that belongs to team $m$ and takes the last move of the game. \\

Case 1: The middle $d$ players, instead of doing the move $\{(2)S_{1} \rightarrow S_{2}\}$, can instead do the following: \\
$\begin{cases}
S_{1} \wedge S_{2} \rightarrow S_{3}  & \text {if k = 1}\\
(2)S_{1} \wedge S_{2} \rightarrow S_{3} & \text {if k > 1}.
\end{cases}$ \\
By doing so Player $q-d$ becomes the player who takes the last move. Since team $m$ has $d$ players, Player $q-d$ belongs to team $m-1$ (mod t). Therefore team $m-1$ (mod t) has the winning strategy which is a contradiction. \\

Case 2: Instead of doing the move $\{(c + 1)S_{1} \rightarrow S_{2}\}$, player $c + 1$ can instead do the following: \\
$\begin{cases}
c S_{1} \wedge c S_{2} \rightarrow S_{3} & \text {if k = 1}\\
(c + 1)S_{1} \wedge c S_{2} \rightarrow S_{3} & \text {if k > 1}.
\end{cases}$ \\
By doing so Player $q-1$ becomes the player to take the last move. There are $t d$ total players and since $t \geq c+2$ there are at least $(c+2)d$ total players. When the sequence of moves in case 2 is repeated $d$ times, Player $q-d$ becomes the player to take the last move, which is a contradiction.
\end{proof}

We now prove Theorem \ref{DI5} by splitting it into the following two lemmas.

\begin{lem}\label{DI6}
For any $t \geq c + 3$ and $d=t-c$, no team has a winning strategy when $n \geq 3c^2+15c+12$ and $t \geq 2c$.
\end{lem}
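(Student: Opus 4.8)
The plan is to argue by contradiction using the stealing strategy packaged in Property~\ref{property 2}, running the same two-case skeleton used for Lemma~\ref{DI3} but now bookkeeping teams of $d$ consecutive players instead of single players. Suppose team $m$ has a winning strategy, where team $j$ owns players $(j-1)d+1,\dots,jd$. Because team $m$ wins no matter what the others do, every line of play in which team $m$ follows its strategy and the opposing teams move however they like is a winning path for team $m$. I would therefore force such a line to realize the forbidden pattern of Property~\ref{property 2} (Case~1 when $c=1$, Case~2 when $c>1$) while keeping all $d$ of its critical ``middle'' players outside team $m$; Property~\ref{property 2} then yields the contradiction immediately.

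The structural observation I would lean on is that the whole pattern occupies $(c+2)d$ consecutive turns, i.e.\ exactly $c+2$ whole teams, and that its $d$ critical moves all fall among these $c+2$ executing teams. Consequently, keeping every critical player off team $m$ is guaranteed once team $m$ is simply \emph{not} one of the $c+2$ executing teams, and this is where $t\geq c+3$ (a spare team beyond the $c+2$ needed) enters. I would split on the location of team $m$: if $m$ is late (roughly $m\geq c+3$), the all-opponent teams $1,\dots,c+2$ run the pattern before team $m$ is ever scheduled, so no critical player lies in team $m$; if $m$ is early ($m\leq c+2$), I would first let team $m$'s $d$ players make their forced opening moves and then hand the pattern to teams $m+1,\dots,m+c+2\pmod t$, where $t\geq c+3$ ensures the window $\{m+1,\dots,m+c+2\}$ misses $m\pmod t$ and that team $m$'s opening round does not wrap back into the pattern. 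Either way Property~\ref{property 2} is violated, so no team $m$ can hold a winning strategy.

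What remains is resource accounting, where the hypotheses $t\geq 2c$ (equivalently $d\geq c$) and the lower bound on $n$ are used. The condition $d\geq c$ keeps the teams wide relative to $c$, which is the regime this lemma is designed for (the complementary regime $t<2c$ being handled separately); it guarantees the teams are large enough to absorb the spacing of the critical moves and to quarantine team $m$'s opening round. For legality one counts, in the worst case, the number of $\{(c+1)S_{1}\to S_{2}\}$ moves the construction demands, multiplies by the $c+1$ copies of $S_{1}$ each such move deletes, and checks that the hypothesis $n\geq 3c^2+15c+12$ leaves enough $1$'s on the list for every move to be playable; I expect this tally to be routine, exactly as the analogous counts were in Lemmas~\ref{DI3}.

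The delicate point, and the one I would spend the most care on, is the combinatorial alignment inside the case analysis: verifying that for every residue $m\pmod t$ and every starting offset one can seat the $d$ equally spaced critical moves entirely within opposing teams, with team $m$'s own forced moves safely outside the pattern. Making the two size hypotheses $t\geq c+3$ and $t\geq 2c$ cooperate to cover all of these alignments at once — rather than the bookkeeping of consumed $1$'s — is where I anticipate the genuine difficulty.
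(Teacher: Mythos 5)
Your proposal is correct and follows essentially the same route as the paper: a stealing-strategy contradiction via Property~\ref{property 2}, with the forbidden $3d$-step (for $c=1$) or $(c+2)d$-step (for $c>1$) pattern executed by a block of consecutive teams chosen to exclude team $m$. The only cosmetic difference is your early/late case split on $m$ (in the style of Lemma~\ref{DI3}); the paper instead handles all positions of $m$ uniformly by waiting for team $m$'s first round of $d$ moves and then letting the following teams run the pattern, and its treatment of the $n$-bound bookkeeping is no more detailed than yours.
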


\begin{proof}
Suppose team $m$ has a winning strategy $(1 \leq m \leq t)$. Note that the last player in team $m$ is player $m d$, so the first player after team $m$ is player $m d + 1$ (mod $p$). There are $t - 1 \geq d$ other teams, and each team has $d$ players, where $d \geq 3$. We now consider two separate cases. \\ 

Case 1: Let $c = 1$. There are $d^2 \geq 3d$ consecutive players from teams other than team $m$. After all the members of team $m$'s first move, the next $d$ teams can all do the following.
\begin{equation}
\begin{split}
\text {Players from $m d + 1$ to $(m + 1)d$ do:} & \ (2)S_{1} \rightarrow S_{2}.\\
\text {Players from $(m + 1)d + 1$ to $(m + 2)d$ do:} & \ (2)S_{1} \rightarrow S_{2}.\\
\text {Players from $(m + 2)d + 1$ to $(m + 3)d$ do:} & \
\begin{cases}
(2)S_{2} \rightarrow S_{3} \wedge S_{1} & \text {if k = 1}\\
(2)S_{2} \rightarrow S_{3} & \text {if k > 1}.
\end{cases}
\end{split}
\end{equation}
Since all of these $3d$ players are not from team $m$, this contradicts Property \ref{property 2}.\\

Case 2: Let $c > 1$. The stealing strategy described in Case 2 of the proof of Property \ref{property 2} takes $c + 1$ moves for each repetition, and $(c + 1)d$ moves total. There are $d^2 \geq (c + 1)d$ consecutive players from other teams, so after all the members of team $m$ complete their first move, the next $d$ teams can do the sequence of $(c + 1)d$ moves. This contradicts Property \ref{property 2}, so team $m$ does not have a winning strategy.
\end{proof}

\begin{lem}\label{DI7}
For any $t = c + 2$ and $d=2$, no team has a winning strategy when $n \geq 6c^2 + 18c +12$ and $t \geq 2c$.
\end{lem}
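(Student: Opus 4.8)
The plan is to mirror the proof of Lemma~\ref{DI4}, which settles the analogous critical case $p=c+2$ for the non-team multiplayer game, and to adapt it to teams of size $d=2$, using Property~\ref{property 2} in place of Property~\ref{DI2}. A separate argument is needed here (rather than the direct application of Property~\ref{property 2} used in Lemma~\ref{DI6}) because when $d=2$ there are too few players outside team $m$ to run the forbidden sequence entirely among the other teams; one is therefore forced to let team $m$ move in the middle of the construction and then argue by cases.

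First I would suppose, for contradiction, that team $m$ has a winning strategy; under the standing convention team $m$ consists of the two consecutive players $2m-1$ and $2m$. After the two players of team $m$ make their first moves, I would force each of the remaining $c+1$ teams (that is, $2c+2$ players) to play $(c+1)S_1\to S_2$ in turn. Since $t=c+2$, one full pass through the other teams returns the turn to team $m$, exactly as the $c+1$ forced moves in Lemma~\ref{DI4} return the turn to player $m$.

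Then comes the case split on team $m$'s forced move. If a player of team $m$ now plays the terminating move, namely $(c+1)S_2\to S_3\wedge S_1$ when $k=1$ and $(c+1)S_2\to S_3$ when $k>1$, then this move together with the preceding block of $(c+1)S_1\to S_2$ moves is exactly the forbidden pattern of Property~\ref{property 2} (Case~1 when $c=1$, Case~2 when $c>1$), with the protected player---the last one to play $(c+1)S_1\to S_2$---lying outside team $m$; this contradicts Property~\ref{property 2}. Otherwise, I would continue forcing another round of $(c+1)S_1\to S_2$ moves and then apply the stealing move, replacing a two-move block by the single combination $cS_1\wedge cS_2\to S_3$ (for $k=1$) or $(c+1)S_1\wedge cS_2\to S_3$ (for $k>1$). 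Because $d=2$, I would carry out this shortening twice, as in the $d$ repetitions of Property~\ref{property 2}, so that the last mover is pushed back by two players, i.e. by one whole team, from the last player of team $m$ to the last player of team $m-1$. This would hand team $m-1$ a winning strategy, the desired contradiction. As in Lemma~\ref{DI4}, I would also note that team $m$'s deviating move destroys at most one $S_2$ while the forced block created more than one, so the combination move in the stealing step is always available.

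Finally I would check feasibility by a count of resources. The construction uses at most $3td=6(c+2)$ moves of the form $(c+1)S_1\to S_2$, each consuming $c+1$ copies of $S_1$, so it suffices that $n\ge 6(c+2)(c+1)=6c^2+18c+12$, which is exactly the stated hypothesis; the condition $t\ge 2c$ guarantees that enough intermediate teams sit between team $m$ and the stealing point for the two-step shift to land precisely on team $m-1$. The hard part will be the bookkeeping in the stealing step: confirming that the pair of combination moves shifts the last mover by exactly $d=2$ positions (a full team) rather than one, and verifying, in both the $k=1$ and $k>1$ cases, that after team $m$'s arbitrary deviation the board still contains enough copies of $S_1$ and at least one $S_2$ to carry the forced continuation through to the stealing move.
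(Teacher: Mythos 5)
Your overall plan coincides with the paper's: force rounds of $(c+1)S_1\to S_2$ by the $2c+2$ players outside team $m$ (one full round returns the turn to team $m$ since $t=c+2$), case-split on whether team $m$ responds with the terminating move, derive a Property~\ref{property 2} contradiction in one case and a two-move steal handing the win to team $m-1$ in the other, and close with the count $6(c+2)(c+1)=6c^2+18c+12$. However, there are two genuine gaps in how you set up the cases. First, your Case 1 trigger is too weak: you invoke Property~\ref{property 2} as soon as \emph{one} player of team $m$ plays $(c+1)S_2\to S_3\wedge S_1$ (resp.\ $(c+1)S_2\to S_3$). For $d=2$ the forbidden pattern consists of $3d=6$ (case 1) or $(c+2)d=2c+4$ (case 2) consecutive steps, ending with \emph{two} terminating moves; a block of $2c+2$ forcing moves followed by a single terminating move is one step short and does not form it. Concretely, with only one terminating move available, the stealing mechanism merges just one forcing/terminating pair, so the game shortens by one move and the last mover shifts back by one position --- which can simply be the \emph{other} player of team $m$ (its two players are consecutive), yielding no contradiction. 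This is precisely why the paper's Case 1 demands that \emph{both} of team $m$'s consecutive moves (steps $2c+3$ and $2c+4$, or steps $4c+7$ and $4c+8$) be terminating.

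Second, your Case 2 never accounts for team $m$'s next pair of moves. Because $t=c+2$, after your second round of $2c+2$ forcing moves the turn returns to team $m$ \emph{before} any outside player can execute the stealing combinations, so team $m$ gets two more arbitrary moves that your construction ignores; those moves could themselves be terminating (which must be funneled back into Case 1) or could destroy copies of $S_2$ needed for the steal. The paper resolves this by laying out both rounds with team $m$ moving arbitrarily twice in between, defining Case 2 as ``each of the two pairs contains at least one non-terminating move,'' and then verifying by an explicit count that at least two copies of $S_2$ survive team $m$'s four arbitrary moves so that the two combination moves at steps $4c+9$ and $4c+10$ can be played. Note also that your feasibility remark asks for ``at least one $S_2$,'' but each combination move consumes $c$ copies of $S_2$, so the double steal needs at least two copies even when $c=1$. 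Your resource count matching $6c^2+18c+12$ is a nice touch (the paper omits it for this lemma), but the case analysis must be repaired along the paper's lines before the proof goes through.
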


\begin{proof}
Suppose team $m$ has a winning strategy $(1 \leq m \leq t)$. For any $c$, after all the members of team $m$'s first move, the remaining players can do the following.

\begin{equation}
\begin{split}
\text {Step 1:} & \ (c+1)S_{1} \rightarrow S_{2}. \\
\vdots \\
\text {Step 2c + 2:} & \ (c+1)S_{1} \rightarrow S_{2}.\\
\text {Step 2c + 3:} & \ \text{Anything since this player is on team $m$}.\\
\text {Step 2c + 4:} & \ \text{Anything since this player is on team $m$}.\\
\text {Step 2c + 5:} & \ (c+1)S_{1} \rightarrow S_{2}.\\
\vdots\\
\text {Step 4c + 6:} & \ (c+1)S_{1} \rightarrow S_{2}.\\
\text {Step 4c + 7:} & \ \text{Anything since this player is on team $m$.} \\
\text {Step 4c + 8:} & \ \text{Anything since this player is on team $m$.} \\
\end{split}
\end{equation}

We now consider the following two cases.\\

Case 1: If steps $2c + 3$ and $2c + 4$ are both $(c + 1)S_{2} \rightarrow S_{3} \wedge S_{1}$ when $k = 1$ or $(c + 1)S_{2} \rightarrow S_{3}$ when $k > 1$, then steps $1$ through $2c + 4$ form the sequence described in Property $2$. The same is true for steps $4c + 7$ and $4c + 8$ which would make the sequence in steps $2c + 5$ through $4c + 8$.\\

Case 2: If one of the steps from $2c +3$ and $2c +4$ is not $(c + 1)S_{2} \rightarrow S_{3} \wedge S_{1}$ if $k = 1$ or $(c + 1)S_{2} \rightarrow S_{3}$ if $k > 1$, and one of steps $4c + 7$ or $4c + 8$ is not either, then the players not on team $m$ can do the following after step $4c + 8$ described above.

\begin{equation}
\begin{split}
\text {Step 4c + 9:} & \ (c+1)S_{1} \rightarrow S_{2}. \\
\text {Step 4c + 10:} & \ (c+1)S_{1} \rightarrow S_{2}. \\
\text {Step 4c + 11:} & \ 
\begin{cases}
(c + 1)S_{2} \rightarrow S_{3} \wedge S_{1} & \text {if k = 1}\\
(c + 1)S_{2} \rightarrow S_{3} & \text {if k > 1}.
\end{cases}\\
\text {Step 4c + 12:} & \ 
\begin{cases}
(c + 1)S_{2} \rightarrow S_{3} \wedge S_{1} & \text {if k = 1}\\
(c + 1)S_{2} \rightarrow S_{3} & \text {if k > 1}.
\end{cases}\\
\end{split}
\end{equation}
Note that steps $2c + 3$ and $2c + 4$ take away at most three of $S_{2}$, and steps $4c + 7$ and $4c + 8$ also take away at most three of $S_{2}$. Also, note that steps $1$ through $2c + 3$ and $2c + 5$ through $4c + 6$ generate eight of $S_{2}$ in total, so there will be at least two $S_{2}$'s remaining. Therefore for steps $4c + 9$ and $4c + 10$,instead of doing $(c + 1)S_{1} \rightarrow S_{2}$ the players can instead do $c S_{1} \wedge c S_{2} \rightarrow S_{3}$ if $k = 1$, or $(c + 1)S_{1} \wedge c S_{2} \rightarrow S_{3}$ if $k > 1$. Since team $m$ has the winning strategy, suppose Player $a$ on team $m$ would have made the last move. However, after this sequence of moves Player $a - 2$ on team $m - 1$ will make the last move, which is a contradiction. This proves Lemma \ref{DI7}.
\end{proof}

By Lemmas \ref{DI6} and \ref{DI7}, Theorem \ref{DI5} is proved.
% Note that since $d=t-c$ and $t \geq 2c$, then $d \geq c$. Therefore Case 2 takes less than or equal to $d + 2$ consecutive moves.
\end{proof}

\textbf{Theorem \ref{DI8}.}
\textit{
Let $p \geq 6$ and $c = 1$. If there are two teams, one with $p - 2$ players  and the other with two players, then the larger team has a winning strategy for $n \geq 36$ in the Team Generalized Zeckendorf Game. 
}

\begin{proof}
We prove this result independently in several different cases based on the arrangement of the players.\\

Case 1: Suppose $p = 6$, and the $4$-player team consists of $4$ consecutive players, then the $2$-player alliance will have $2$ consecutive players. If we consider the $4$-player team as two teams each with two consecutive players, then by Lemma \ref{DI7} the $2$-player alliance does not have a winning strategy. Since there are only two teams this is analogous to the regular $2$-player game where one team must have a winning strategy, so the $4$-player team must have a winning strategy.\\

Case 2: Suppose again that $p = 6$, but the $4$-player team is separated in two parts, each with two consecutive players. Note that the sequence of all six players is equivalent to two rounds of a $3$-player game in which two of the players are on the same team. According to Lemma \ref{DI4} the single player does not have a winning strategy in this case, so once again the $4$-player team has a winning strategy.\\

Case 3: Suppose the larger team is separated in two parts, where one part has at least three consecutive players. Note that this will always be the case for $p \geq 7$ by the pigeonhole principle. Suppose for contradiction that the smaller team has a winning strategy, then there exists a Player $q$ from the smaller team who makes the last move. Let the three consecutive players on the larger team be $a$, $a + 1$, and $a + 2$. They can do the following moves to steal the winning strategy.
\begin{equation}
\begin{split}
\text {Player a:} & \ (2)S_{1} \rightarrow S_{2}.\\
\text {Player a + 1:} & \ (2)S_{1} \rightarrow S_{2}. \\
\text {Player a + 2:} & \ 
\begin{cases}
(2)S_{2} \rightarrow S_{3} \wedge S_{1} & \text {if k = 1}\\
(2)S_{2} \rightarrow S_{3} & \text {if k > 1}.
\end{cases}\\
\end{split}
\end{equation}
Note that if Player $a + 1$ instead does $S_{1} \wedge S_{2} \rightarrow S_{3}$ if $k = 1$, or $2 S_{1} \wedge S_{2} \rightarrow S_{3}$ if $k > 1$, the Player $q - 1$ will now make the last move. Since the players on the smaller team are separated, Player $q - 1$ belongs to the larger team, which is a contradiction. Thus, the larger team has a winning strategy.\\

Case 4: Suppose $p = 7$ and the players on the larger team are all consecutive, and for contradiction assume the smaller team has a winning strategy. Let the first of the five consecutive players be Player $a$, then the larger team can do the following.
\begin{equation}
\begin{split}
\text {Step 1: Player a:} & \ (2)S_{1} \rightarrow S_{2}. \\
\vdots \\
\text {Step 5: Player a + 4:} & \ (2)S_{1} \rightarrow S_{2}. \\
\text {Step 6: Player a + 5:} & \ \text {Anything since this player is on the smaller team.} \\
\text {Step 7: Player a + 6:} & \ \text {Anything since this player is on the smaller team.} \\
\text {Step 8: Player a:} & \ (2)S_{1} \rightarrow S_{2}.\\
\text {Step 9: Player a + 1:} & \ (2)S_{1} \rightarrow S_{2}.\\
\text {Step 10: Player a + 2:} & \ (2)S_{1} \rightarrow S_{2}.\\
\text {Step 11: Player a + 3:} & \ 
\begin{cases}
(2)S_{2} \rightarrow S_{3} \wedge S_{1} & \text {if k = 1}\\
(2)S_{2} \rightarrow S_{3} & \text {if k > 1}.
\end{cases}\\
\text {Step 12: Player a + 4:} & \ 
\begin{cases}
(2)S_{2} \rightarrow S_{3} \wedge S_{1} & \text {if k = 1}\\
(2)S_{2} \rightarrow S_{3} & \text {if k > 1}.
\end{cases}\\
\end{split}
\end{equation}
Note that steps $6$ and $7$ can take away at most four of $S_{2}$ in total, and steps $1$, $2$, $3$, $4$, $5$, and $8$ generate six of $S_{2}$ in total. Therefore, after step $8$ there will be at least two of $S_{2}$ remaining. Let $q$ be the player from the smaller team who makes the final move. Then Players $a + 1$ and $a + 2$ in steps $9$ and $10$ can instead do $S_{1} \wedge S_{2} \rightarrow S_{3}$ if $k = 1$, or $2 S_{1} \wedge S_{2} \rightarrow S_{3}$ if $k > 1$. Then Player $q - 2$ on the larger team becomes the player to take the last move, which is a contradiction so the larger team has a winning strategy.\\

Case 5: Suppose $p \geq 8$ and the players on the larger team are all consecutive, and for contradiction assume the smaller team has a winning strategy, with Player $q$ on the smaller team making the final move. Let the first of the consecutive players be Player $a$, then the larger team can do the following.
\begin{equation}
\begin{split}
\text {Step 1: Player a:} & \ (2)S_{1} \rightarrow S_{2}. \\
\vdots \\
\text {Step 4: Player a + 3:} & \ (2)S_{1} \rightarrow S_{2}.\\
\text {Step 5: Player a + 4:} & \ 
\begin{cases}
(2)S_{2} \rightarrow S_{3} \wedge S_{1} & \text {if k = 1}\\
(2)S_{2} \rightarrow S_{3} & \text {if k > 1}.
\end{cases}\\
\text {Step 6: Player a + 5:} & \ 
\begin{cases}
(2)S_{2} \rightarrow S_{3} \wedge S_{1} & \text {if k = 1}\\
(2)S_{2} \rightarrow S_{3} & \text {if k > 1}.
\end{cases}\\
\end{split}
\end{equation}
Note that Players $a$ through $a + 5$ are all on the large alliance, and that Players $a + 2$ and $a + 3$ can instead do $S_{1} \wedge S_{2} \rightarrow S_{3}$ if $k = 1$, or $2 S_{1} \wedge S_{2} \rightarrow S_{3}$ if $k > 1$. Then Player $q - 2$ on the larger team becomes the player to take the last move, which is a contradiction so the larger team has a winning strategy.

These five cases prove Theorem \ref{DI8}.
\end{proof}

%%%%%%%%%%%%%%%%%%%%%%%%%%%%%%%%%%%%%%%%%%%%%%%%%%%%%%%%%%%%%%%%%%%%%%%%%%%%%%%%%%%%%%%%%%%%%%%%%%%%%%%%%%%%%%%%%%%%%%%%%%%%%%%%%%%%%%%%
%%%%%%%%%%%%%%%%%%%%%%%%%%%%%%%%%%%%%%%%%%%%%%%%%%%%%%%%%%%%%%%%%%%%%%%%%%%%%%%%%%%%%%%%%%%%%%%%%%%%%%%%%%%%%%%%%%%%%%%%%%%%%%%%%%%%%%%%
\section{Future Work}
Although we have proved some significant results on winning strategies of Generalized (c,k)-nacci game, there are still many interesting questions worth exploring. First of all, we have not proved which player has a winning strategy for all types of Generalized Zeckendorf Games, but rather only for some of them. It would be very interesting if we could extend our results and techniques for finding winning strategies to other generalized sequences described by linear recurrence relations. For instance, if we extend the game to work on more generalized sequences, would the stealing strategy technique still apply? We could also further discuss which player has a winning strategy in the two-player version of the game, and whether a player or an alliance has a winning strategy in the multiplayer or multialliance games of more generalized linear recurrence relations.

Additionally, we could try to improve our lower bound for the two-player Generalized $(c,k)$-nacci Zeckendorf Game, and prove which player has a winning strategy when $n$ is smaller than $(c+1)^3+c+1$. We have found improved bounds for specific cases such as the Zeckendorf Game on the Fibonacci numbers and the Tribonacci Game, so we would like to find a general pattern for the lower bound where Theorem \ref{thm3} holds. Similarly, we can improve our lower bounds on $n$ related to the multiplayer and multialliance Generalized $(c,k)$-nacci Zeckendorf Game. 
\newpage

%%%%%%%%%%%%%%%%%%%%%%%%%%%%%%%%%%%%%%%%%%%%%%%%%%%%%%%%%%%%%%%%%%%%%%%%%%%%%%%%%%%%%%%%%%%%%%%%%%%%%%%%%%%%%%%%%%%%%%%%%%%%%%%%%%%%%%%%
%%%%%%%%%%%%%%%%%%%%%%%%%%%%%%%%%%%%%%%%%%%%%%%%%%%%%%%%%%%%%%%%%%%%%%%%%%%%%%%%%%%%%%%%%%%%%%%%%%%%%%%%%%%%%%%%%%%%%%%%%%%%%%%%%%%%%%%%

\appendix

%%%%%%%%%%%%%%%%%%%%%%%%%%%%%%%%%%%%%%%%%%%%%%%%%%%%%%%%%%%%%%%%%%%%%%%%%%%%%%%%%%%%%%%%%%%%%%%%%%%%%%%%%%%%%%%%%%%%%%%%%%%%%%%%%%%%%%%%
%%%%%%%%%%%%%%%%%%%%%%%%%%%%%%%%%%%%%%%%%%%%%%%%%%%%%%%%%%%%%%%%%%%%%%%%%%%%%%%%%%%%%%%%%%%%%%%%%%%%%%%%%%%%%%%%%%%%%%%%%%%%%%%%%%%%%%%%
\section{Proof of Lemma \ref{(c,1)} When d is Even}\label{appendix 1}
Now we consider the case when $d$ is even, and we suppose for contradiction that Player $2$ has a winning strategy. Note that from row $1$ to row $c$, each node only has one child, so Player $2$ has a winning strategy for all these $c$ rows. Row $c$ is Player $1$'s turn since $c$ is odd, so both nodes in row $c+1$ have a winning strategy for Player $2$. We can follow this pattern following the same moves as when $c$ is even until row $c^2$. The only slight difference is on turns $kc$ where k is even, since turn $kc$ is Player $2$'s turn and Player $2$ also has the winning strategy. In this case at least one of the children in row $kc + 1$ must have a winning strategy for Player $2$. Since both nodes in row $kc + 1$ have children back on the main part of the tree, the node in row $kc+2$ must have a winning strategy for Player $2$. The node $\{1^{(n-c^3+2c-1)} \wedge c^{(c-1)} \wedge (cc-1)^{(c-1)}\}$ in row $c^2$ is Player $1$'s turn, so all of its children in row $c^2 + 1$ also have a winning strategy for Player $2$. Since $\{1^{(n-c^3+2c-1)} \wedge ccc-2c+1\}$ in row $c^2+1$ has only one child, $\{1^{(n-c^3+2c-1)} \wedge c \wedge ccc-2c+1\}$ in row $c^2+2$ must also have a winning strategy for Player $2$. Then Player $1$ must have a winning strategy for the equivalent node in row $C^2+3$. However, since Player $2$ has a winning strategy for $\{1^{(n-c^3-c)} \wedge (cc-1)^{(c)}\}$ in row $c^2+1$, at least one of its children must also have a winning strategy for Player $2$. Both children are parents to the node $\{1^{(n-c^3+c-1)} \wedge c \wedge ccc-2c+1\}$ in row $c^2+3$, and since row $c^2+2$ is Player $1$'s turn, $\{1^{(n-c^3+c-1)} \wedge c \wedge ccc-2c+1\}$ in row $c^2+3$ must also have a winning strategy for Player $2$. This is a contradiction, so Player $2$ must have a winning strategy.

%%%%%%%%%%%%%%%%%%%%%%%%%%%%%%%%%%%%%%%%%%%%%%%%%%%%%%%%%%%%%%%%%%%%%%%%%%%%%%%%%%%%%%%%%%%%%%%%%%%%%%%%%%%%%%%%%%%%%%%%%%%%%%%%%%%%%%%%
%%%%%%%%%%%%%%%%%%%%%%%%%%%%%%%%%%%%%%%%%%%%%%%%%%%%%%%%%%%%%%%%%%%%%%%%%%%%%%%%%%%%%%%%%%%%%%%%%%%%%%%%%%%%%%%%%%%%%%%%%%%%%%%%%%%%%%%%
\section{Proof of Lemma \ref{(c,k)}}\label{appendix 2}
Let $d=c-1$. Note that Lemma \ref{(c,k)} is equivalent to the following statement: for $(d,k)$-nacci Game, for any $c\geq 2$, $k\geq3$, and $n\geq c^3+c$, when $d$ is odd, Player $2$ always has a winning strategy; when $d$ is even, Player $1$ always has a winning strategy.

First, we prove that for the $(d,k)$-nacci Game, for any $c\geq 2$, $k\geq3$, and $n\geq c^3+c$, when $d$ is odd, Player $2$ always has a winning strategy.

Suppose for contradiction that Player $1$ has a winning strategy. Note that from row $1$ to row $c$, each node only has $1$ child, so Player $1$ has a winning strategy for all these $c$ rows. In row $c$ Player $1$ has a winning strategy for $\{1^{n-(c-1)c}\wedge c^{c-1}\}$, and since it is Player $2$'s turn, Player $1$ has a winning strategy for all of its children, including $\{1^{n-c^2}\wedge c^c\}$ and $\{1^{n-c^2}\wedge cc\}$ in row $c+1$. Since $\{1^{n-c^2}\wedge cc\}$ in row $c+1$ has only one child, for its only child $\{1^{n-c^2-c}\wedge c\wedge cc\}$ in row $c+2$, Player $1$ has a winning strategy. We call the process from row $1$ to row $c$ as round $1$, which contains $c$ rows in total.

Starting from the node $\{1^{n-c^2}\wedge cc\}$ in row $c+1$, we repeat the same procedure as in round $1$. We repeat this procedure $c$ times until row $c^2$.

As Player $1$ has a winning strategy for $\{1^{n-c(c-1)-c^2(c-1)}\wedge c^{c-1}\wedge cc^{c-1}\}$ in row $c^2$, and it is Player $2$'s turn in row $c^2$, then Player $1$ has a winning strategy for all its children, which includes the $3$ nodes in row $c^2+1$ as shown in the diagram. Note that $\{1^{n-c^3}\wedge ccc\}$ in row $c^2+1$ has one child, so Player $1$ has a winning strategy for $\{1^{n-c^3-c}\wedge c \wedge ccc\}$ in row $c^2+2$. Since it is equivalent to $\{1^{n-c^3-c}\wedge c \wedge ccc\}$ in row $c^2+2$, Player $2$ has a winning strategy for $\{1^{n-c^3-c}\wedge c\wedge ccc\}$ in row $c^2+3$. 

On the other hand, since Player $1$ has a winning strategy for $\{1^{n-c^3}\wedge cc^c\}$ in row $c^2+1$, Player $1$ has a winning strategy for at least one of its children, which is either $\{1^{n-c^3-c}\wedge c\wedge cc^c\}$ or $\{1^{n-c^3}\wedge ccc\}$ in row $c^2+2$. Since $\{1^{n-c^3-c}\wedge c\wedge ccc\}$ in row $c^2+3$ is a child of both nodes, and it is Player $2$'s turn in row $c^2+2$, it follows that Player $1$ has a winning strategy for $\{1^{n-c^3-c}\wedge c\wedge ccc\}$ in row $c^2+3$. This is a contradiction, so we have proved that Player $2$ has a winning strategy when $d$ is odd.

Next, we prove that for the $(d,k)$-nacci Game, for any $c\geq 2$, $k\geq3$, and $n\geq c^3+c$, when $d$ is even, Player $1$ always has a winning strategy.

We suppose for contradiction that Player $2$ has a winning strategy. Note that from row $1$ to row $c$, each node has only $1$ child, so Player $2$ has a winning strategy for all these $c$ rows. In row $c$ Player $2$ has a winning strategy for $\{1^{n-(c-1)c}\wedge c^{c-1}\}$, and since it is Player $1$'s turn, Player $2$ has a winning strategy for all its children, including $\{1^{n-c^2}\wedge c^c\}$ and $\{1^{n-c^2}\wedge cc\}$ in row $c+1$. Since $\{1^{n-c^2}\wedge cc\}$ has only $1$ child, we can follow a similar process to that of the first $c$ rows from $\{1^{n-c^2}\wedge cc\}$ in row $c+1$ until $\{1^{n-c^2-(c-1)c}\wedge c^{c-1}\wedge cc\}$ in row $2c$. Since each node has only one child, Player $2$ has a winning strategy for all these $c$ nodes.

Since Player $2$ has a winning strategy for $\{1^{n-c^2-(c-1)c}\wedge c^{c-1}\wedge cc\}$ in row $2c$, Player $2$ has a winning strategy for at least one of its children, which is either $\{1^{n-2c^2}\wedge c^c \wedge cc\}$ or $\{1^{n-2c^2}\wedge cc^2\}$ in row $2c+1$. Note that $\{1^{n-2c^2-c}\wedge c\wedge cc^2\}$ in row $2c+2$ is a child of both nodes, and it is Player $1$'s turn in row $2c+1$, so Player $2$ has a winning strategy for $\{1^{n-2c^2-c}\wedge c\wedge cc^2\}$ in row $2c+2$.

We call the process from row $1$ to row $c$ as round $1$, the process from row $c+1$ to row $2c$ as round $2$, and so on, where each round contains $c$ consecutive rows. We can repeat the same process for $c$ rounds until row $c^2$. Also, note that from row $2tc$ to row $2tc+2$ (where $t$ is a positive integer and $1\leq t \leq d/2$), the proof is the same as the proof from row $2c$ to row $2c+2$.

As Player $2$ has a winning strategy for $\{1^{n-c(c-1)-c^2(c-1)}\wedge c^{c-1} \wedge cc^{c-1}\}$ in row $c^2$, and it is Player $1$'s turn in row $c^2$, then Player $2$ has a winning strategy for all its children, which includes the $3$ nodes in row $c^2+1$ as shown in the diagram. Note that $\{1^{n-c^3}\wedge ccc\}$ in row $c^2+1$ has one child, so Player $2$ has a winning strategy for $\{1^{n-c^3-c}\wedge c\wedge ccc\}$ in row $c^2+2$. Since it is equivalent to $\{1^{n-c^3-c}\wedge c\wedge ccc\}$ in row $c^2+2$, Player $1$ has a winning strategy for $\{1^{n-c^3-c}\wedge c\wedge ccc\}$ in row $c^2+3$.

On the other hand, since Player $2$ has a winning strategy for $\{1^{n-c^3}\wedge cc^c\}$ in row $c^2+1$, Player $2$ has a winning strategy for at least one of its children, which is either $\{1^{n-c^3-c}\wedge c\wedge cc^c\}$ or $\{1^{n-c^3}\wedge ccc\}$ in row $c^2+2$. Since $\{1^{n-c^3-c}\wedge c\wedge ccc\}$ in row $c^2+3$ is a child of both nodes, and it is Player $1$'s turn in row $c^2+2$, it follows that Player $2$ has a winning strategy for $\{1^{n-c^3-c}\wedge c\wedge ccc\}$ in row $c^2+3$. This is a contradiction, so we have proved that Player $1$ has a winning strategy when $d$ is even, which completes the proof.

%%%%%%%%%%%%%%%%%%%%%%%%%%%%%%%%%%%%%%%%%%%%%%%%%%%%%%%%%%%%%%%%%%%%%%%%%%%%%%%%%%%%%%%%%%%%%%%%%%%%%%%%%%%%%%%%%%%%%%%%%%%%%%%%%%%%%%%%
%%%%%%%%%%%%%%%%%%%%%%%%%%%%%%%%%%%%%%%%%%%%%%%%%%%%%%%%%%%%%%%%%%%%%%%%%%%%%%%%%%%%%%%%%%%%%%%%%%%%%%%%%%%%%%%%%%%%%%%%%%%%%%%%%%%%%%%%

\newpage
\section{Tree for Lemma \ref{(c,1)} When d is Even}\label{appendix 3}
\begin{figure}[h]
    \centering
\resizebox{12cm}{!}{
\begin{tikzcd}[ampersand replacement=\&]
	player \& turn \\
	1 \& 1 \&\& \textcolor{rgb,255:red,92;green,92;blue,214}{1^n} \\
	2 \& 2 \&\& \textcolor{rgb,255:red,92;green,92;blue,214}{1^{(n-c)} \wedge c} \&\& {} \\
	2 \& c \&\& \textcolor{rgb,255:red,92;green,92;blue,214}{\ 1^{(n-c^2+c)} \wedge c^{(c-1)}} \&\&\& {} \\
	1 \& {c+1} \& \textcolor{rgb,255:red,92;green,92;blue,214}{1^{(n-c^2)} \wedge c^{(c)}} \&\& \textcolor{rgb,255:red,92;green,92;blue,214}{1^{(n-c^2+1)} \wedge cc-1} \&\&\\
	2 \& {c+2} \&\& \textcolor{rgb,255:red,92;green,92;blue,214}{1^{(n-c^2-c+1)} \wedge c \wedge cc-1} \\
	2 \& 2c \&\& \textcolor{rgb,255:red,92;green,92;blue,214}{1^{(n-2c^2+c+1)} \wedge c^{(c-1)} \wedge cc-1} \\
	1 \& {2c+1} \& \textcolor{rgb,255:red,92;green,92;blue,214}{1^{(n-2c^2+1)} \wedge c^{(c)} \wedge cc-1} \& \textcolor{rgb,255:red,92;green,92;blue,214}{1^{(n-2c^2+2)} \wedge (cc-1)^{(2)}} \&\&\\
	2 \& {2c+2} \&\& \textcolor{rgb,255:red,92;green,92;blue,214}{1^{(n-2c^2-c+2)} \wedge c \wedge (cc-1)^{(2)}} \\
	2 \& {c^2} \&\& \textcolor{rgb,255:red,92;green,92;blue,214}{1^{(n-c^3+2c-1)} \wedge c^{(c-1)} \wedge (cc-1)^{(c-1)}} \\
	1 \& {c^2+1} \& \textcolor{rgb,255:red,92;green,92;blue,214}{1^{(n-c^3+c-1)} \wedge c^{(c)} \wedge (cc-1)^{(c-1)}} \& \textcolor{rgb,255:red,92;green,92;blue,214}{1^{(n-c^3-c)} \wedge (cc-1)^{(c)}} \& \textcolor{rgb,255:red,92;green,92;blue,214}{1^{(n-c^3+2c-1)} \wedge ccc-2c+1} \\
	2 \& {c^2+2} \& \textcolor{rgb,255:red,92;green,92;blue,214}{1^{(n-c^3)} \wedge c \wedge (cc-1)^{(c)}} \& \textcolor{rgb,255:red,92;green,92;blue,214}{1^{(n-c^3+2c-1)} \wedge ccc-2c+1} \& \textcolor{rgb,255:red,92;green,92;blue,214}{1^{(n-c^3+c-1)} \wedge c \wedge ccc-2c+1} \\
	1 \& {c^2+3} \&\& \textcolor{rgb,255:red,92;green,214;blue,92}{1^{(n-c^3+c-1)} \wedge c \wedge ccc-2c+1} \& {}
	\arrow[no head, from=2-4, to=3-4]
	\arrow[dashed, no head, from=3-4, to=4-4]
	\arrow[no head, from=4-4, to=5-3]
	\arrow[no head, from=4-4, to=5-5]
	\arrow[no head, from=5-3, to=6-4]
	\arrow[no head, from=5-5, to=6-4]
	\arrow[dashed, no head, from=6-4, to=7-4]
	\arrow[no head, from=7-4, to=8-3]
	\arrow[no head, from=7-4, to=8-4]
	\arrow[no head, from=10-4, to=11-3]
	\arrow[no head, from=8-4, to=9-4]
	\arrow[no head, from=8-3, to=9-4]
	\arrow[dashed, no head, from=9-4, to=10-4]
	\arrow[no head, from=10-4, to=11-4]
	\arrow[no head, from=10-4, to=11-5]
	\arrow[no head, from=11-3, to=12-3]
	\arrow[no head, from=11-4, to=12-3]
	\arrow[no head, from=11-4, to=12-4]
	\arrow[no head, from=11-5, to=12-5]
	\arrow[no head, from=12-3, to=13-4]
	\arrow[no head, from=12-4, to=13-4]
	\arrow[dashed, no head, from=3-1, to=4-1]
	\arrow[dashed, no head, from=3-2, to=4-2]
	\arrow[dashed, no head, from=6-2, to=7-2]
	\arrow[dashed, no head, from=6-1, to=7-1]
	\arrow[dashed, no head, from=9-2, to=10-2]
	\arrow[dashed, no head, from=9-1, to=10-1]
\end{tikzcd}
}
    \caption{$(d, 1)$-nacci tree depicting the proof of Theorem \ref{(c,1)}, where $d=c-1$ is odd. Red indicates a winning strategy for Player $1$, and blue indicates a winning strategy for Player $2$. Green indicates a winning strategy for both players, which is a contradiction.}
    \label{fig:the (c,1)-Tree}
\end{figure}

%%%%%%%%%%%%%%%%%%%%%%%%%%%%%%%%%%%%%%%%%%%%%%%%%%%%%%%%%%%%%%%%%%%%%%%%%%%%%%%%%%%%%%%%%%%%%%%%%%%%%%%%%%%%%%%%%%%%%%%%%%%%%%%%%%%%%%%%
%%%%%%%%%%%%%%%%%%%%%%%%%%%%%%%%%%%%%%%%%%%%%%%%%%%%%%%%%%%%%%%%%%%%%%%%%%%%%%%%%%%%%%%%%%%%%%%%%%%%%%%%%%%%%%%%%%%%%%%%%%%%%%%%%%%%%%%%

\newpage
\section{Tree for Lemma \ref{(c,2)} When d is Even}\label{appendix 4}
\begin{figure}[h]
    \centering
\resizebox{12cm}{!}{
\begin{tikzcd}[ampersand replacement=\&]
	player \& turn \\
	1 \& 1 \&\& \textcolor{rgb,255:red,92;green,92;blue,214}{1^n} \\
	2 \& 2 \&\& \textcolor{rgb,255:red,92;green,92;blue,214}{1^{(n-c)} \wedge c} \\
	2 \& c \&\& \textcolor{rgb,255:red,92;green,92;blue,214}{1^{(n-c^2+c)} \wedge c^{(c-1)}} \\
	1 \& {c+1} \& \textcolor{rgb,255:red,92;green,92;blue,214}{1^{(n-c^2)} \wedge c^{(c)}} \&\& \textcolor{rgb,255:red,92;green,92;blue,214}{1^{(n-c^2)} \wedge cc} \\
	2 \& {c+2} \&\& \textcolor{rgb,255:red,92;green,92;blue,214}{1^{(n-c^2-c)} \wedge c \wedge cc} \\
	2 \& 2c \&\& \textcolor{rgb,255:red,92;green,92;blue,214}{1^{(n-2c^2+c)} \wedge c^{(c-1)} \wedge cc} \\
	1 \& {2c+1} \& \textcolor{rgb,255:red,92;green,92;blue,214}{1^{(n-2c^2)} \wedge c^{(c)} \wedge cc} \& \textcolor{rgb,255:red,92;green,92;blue,214}{1^{(n-2c^2)} \wedge cc^{(2)}} \\
	2 \& {2c+2} \&\& \textcolor{rgb,255:red,92;green,92;blue,214}{1^{(n-2c^2-c)} \wedge c \wedge cc^{(2)}} \\
	2 \& {c^2} \&\& \textcolor{rgb,255:red,92;green,92;blue,214}{1^{(n-c^3+c)} \wedge c^{(c-1)} \wedge cc^{(c-1)}} \\
	1 \& {c^2+1} \& \textcolor{rgb,255:red,92;green,92;blue,214}{1^{(n-c^3)} \wedge c^{(c)} \wedge cc^{(c-1)}} \& \textcolor{rgb,255:red,92;green,92;blue,214}{1^{(n-c^3)} \wedge cc^{(c)}} \& \textcolor{rgb,255:red,92;green,92;blue,214}{1^{(n-c^3+1)} \wedge ccc-1} \\
	2 \& {c^2+2} \& \textcolor{rgb,255:red,92;green,92;blue,214}{1^{(n-c^3-c)} \wedge c \wedge cc^{(c)}} \& \textcolor{rgb,255:red,92;green,92;blue,214}{1^{(n-c^3+1)} \wedge ccc-1} \& \textcolor{rgb,255:red,92;green,92;blue,214}{1^{(n-c^3-c+1)} \wedge c \wedge ccc-1} \\
	1 \& {c^2+3} \&\& \textcolor{rgb,255:red,92;green,214;blue,92}{1^{(n-c^3-c+1)} \wedge c \wedge ccc-1} \\
	\&\&\&\& {}
	\arrow[no head, from=2-4, to=3-4]
	\arrow[dashed, no head, from=3-4, to=4-4]
	\arrow[no head, from=4-4, to=5-3]
	\arrow[no head, from=4-4, to=5-5]
	\arrow[no head, from=5-3, to=6-4]
	\arrow[no head, from=5-5, to=6-4]
	\arrow[dashed, no head, from=6-4, to=7-4]
	\arrow[no head, from=7-4, to=8-4]
	\arrow[no head, from=7-4, to=8-3]
	\arrow[no head, from=8-4, to=9-4]
	\arrow[no head, from=8-3, to=9-4]
	\arrow[dashed, no head, from=9-4, to=10-4]
	\arrow[no head, from=10-4, to=11-4]
	\arrow[no head, from=10-4, to=11-3]
	\arrow[no head, from=10-4, to=11-5]
	\arrow[no head, from=11-3, to=12-3]
	\arrow[no head, from=11-4, to=12-3]
	\arrow[no head, from=11-5, to=12-5]
	\arrow[no head, from=12-3, to=13-4]
	\arrow[no head, from=11-4, to=12-4]
	\arrow[no head, from=12-4, to=13-4]
	\arrow[dashed, no head, from=3-2, to=4-2]
	\arrow[dashed, no head, from=3-1, to=4-1]
	\arrow[dashed, no head, from=6-2, to=7-2]
	\arrow[dashed, no head, from=6-1, to=7-1]
	\arrow[dashed, no head, from=9-2, to=10-2]
	\arrow[dashed, no head, from=9-1, to=10-1]
\end{tikzcd}
}
    \caption{$(d, 2)$-nacci tree depicting the proof of Theorem \ref{(c,2)}, where $d=c-1$ is even. Red indicates a winning strategy for Player $1$, and blue indicates a winning strategy for Player $2$. Green indicates a winning strategy for both players, which is a contradiction.}
    \label{fig:the (c,2)-Tree}
\end{figure}

%%%%%%%%%%%%%%%%%%%%%%%%%%%%%%%%%%%%%%%%%%%%%%%%%%%%%%%%%%%%%%%%%%%%%%%%%%%%%%%%%%%%%%%%%%%%%%%%%%%%%%%%%%%%%%%%%%%%%%%%%%%%%%%%%%%%%%%%
%%%%%%%%%%%%%%%%%%%%%%%%%%%%%%%%%%%%%%%%%%%%%%%%%%%%%%%%%%%%%%%%%%%%%%%%%%%%%%%%%%%%%%%%%%%%%%%%%%%%%%%%%%%%%%%%%%%%%%%%%%%%%%%%%%%%%%%%
\newpage
\section{Tree for Lemma \ref{(c,k)} When d is Even}\label{appendix 5}
\begin{figure}[h]
    \centering
\resizebox{11cm}{!}{
\begin{tikzcd}[ampersand replacement=\&]
	player \& turn \\
	1 \& 1 \&\& \textcolor{rgb,255:red,92;green,92;blue,214}{1^n} \\
	2 \& 2 \&\& \textcolor{rgb,255:red,92;green,92;blue,214}{1^{(n-c)}} \\
	2 \& c \&\& \textcolor{rgb,255:red,92;green,92;blue,214}{1^{(n-c^2+c)} \wedge c^{(c-1)}} \\
	1 \& {c+1} \& \textcolor{rgb,255:red,92;green,92;blue,214}{1^{(n-c^2)} \wedge c^{(c)}} \&\& \textcolor{rgb,255:red,92;green,92;blue,214}{1^{(n-c^2)} \wedge cc} \\
	2 \& {c+2} \&\& \textcolor{rgb,255:red,92;green,92;blue,214}{1^{(n-c^2-c)} \wedge c \wedge cc} \\
	2 \& 2c \&\& \textcolor{rgb,255:red,92;green,92;blue,214}{1^{(n-2c^2+c)} \wedge c^{(c-1)} \wedge cc} \\
	1 \& {2c+1} \& \textcolor{rgb,255:red,92;green,92;blue,214}{1^{(n-2c^2)} \wedge c^{(c)} \wedge cc} \& \textcolor{rgb,255:red,92;green,92;blue,214}{1^{(n-2c^2)} \wedge cc^{(2)}} \\
	2 \& {2c+2} \&\& \textcolor{rgb,255:red,92;green,92;blue,214}{1^{(n-2c^2-c)} \wedge c \wedge cc^{(2)}} \\
	2 \& {c^2} \&\& \textcolor{rgb,255:red,92;green,92;blue,214}{1^{(n-c^3+c)} \wedge c^{(c-1)} \wedge cc^{(c-1)}} \\
	1 \& {c^2+1} \& \textcolor{rgb,255:red,92;green,92;blue,214}{1^{(n-c^3)} \wedge c^{(c)} \wedge cc^{(c-1)}} \& \textcolor{rgb,255:red,92;green,92;blue,214}{1^{(n-c^3)} \wedge cc^{(c)}} \& \textcolor{rgb,255:red,92;green,92;blue,214}{1^{(n-c^3)} \wedge ccc} \\
	2 \& {c^2+2} \& \textcolor{rgb,255:red,92;green,92;blue,214}{1^{(n-c^3-c)} \wedge c \wedge cc^{(c)}} \& \textcolor{rgb,255:red,92;green,92;blue,214}{1^{(n-c^3)} \wedge ccc} \& \textcolor{rgb,255:red,92;green,92;blue,214}{1^{(n-c^3-c)} \wedge c \wedge ccc} \\
	1 \& {c^2+3} \&\& \textcolor{rgb,255:red,92;green,214;blue,92}{1^{(n-c^3-c)} \wedge c \wedge ccc}
	\arrow[no head, from=2-4, to=3-4]
	\arrow[dashed, no head, from=3-4, to=4-4]
	\arrow[no head, from=4-4, to=5-5]
	\arrow[no head, from=4-4, to=5-3]
	\arrow[no head, from=5-5, to=6-4]
	\arrow[no head, from=5-3, to=6-4]
	\arrow[dashed, no head, from=6-4, to=7-4]
	\arrow[no head, from=7-4, to=8-4]
	\arrow[no head, from=7-4, to=8-3]
	\arrow[no head, from=8-4, to=9-4]
	\arrow[no head, from=8-3, to=9-4]
	\arrow[dashed, no head, from=9-4, to=10-4]
	\arrow[no head, from=10-4, to=11-3]
	\arrow[no head, from=10-4, to=11-5]
	\arrow[no head, from=10-4, to=11-4]
	\arrow[no head, from=11-3, to=12-3]
	\arrow[no head, from=11-4, to=12-3]
	\arrow[no head, from=12-4, to=11-4]
	\arrow[no head, from=11-5, to=12-5]
	\arrow[no head, from=12-4, to=13-4]
	\arrow[dashed, no head, from=3-2, to=4-2]
	\arrow[dashed, no head, from=3-1, to=4-1]
	\arrow[dashed, no head, from=6-2, to=7-2]
	\arrow[dashed, no head, from=6-1, to=7-1]
	\arrow[dashed, no head, from=9-2, to=10-2]
	\arrow[dashed, no head, from=9-1, to=10-1]
	\arrow[no head, from=12-3, to=13-4]
\end{tikzcd}
}
    \caption{$(d, k)$-nacci tree depicting the proof of Theorem \ref{(c,k)}, where $d=c-1$ is even and $k\geq3$. Red indicates a winning strategy for Player $1$, and blue indicates a winning strategy for Player $2$. Green indicates a winning strategy for both players, which is a contradiction.}
    \label{fig:the (c,k)-Tree}
\end{figure}

\newpage

%%%%%%%%%%%%%%%%%%%%%%%%%%%%%%%%%%%%%%%%%%%%%%%%%%%%%%%%%%%%%%%%%%%%%%%%%%%%%%%%%%%%%%%%%%%%%%%%%%%%%%%%%%%%%%%%%%%%%%%%%%%%%%%%%%%%%%%%
%%%%%%%%%%%%%%%%%%%%%%%%%%%%%%%%%%%%%%%%%%%%%%%%%%%%%%%%%%%%%%%%%%%%%%%%

\medskip

\noindent MSC2020: 91A05, 91A06

\end{document}